\newcommand{\Rb}{\mathbb R}
\newcommand{\Cc}{\mathcal C}
\newcommand{\Mc}{\mathcal M}
\newcommand{\Uc}{\mathcal U}
\newcommand{\veps}{\varepsilon}
\newcommand{\vphi}{\varphi}
\newcommand{\indic}[1]{\mathbf{1}_{#1}}
\newcommand{\ds}{\displaystyle}
\newcommand{\ie}{{\it i.e.}}
\newtheorem{thm}{Theorem}
\newtheorem{propo}{Proposition}
\newtheorem{lem}{Lemma}
\newtheorem{defn}{Definition}
\newtheorem{hyp}{Assumption}
\newtheorem{remk}{Remark}
\begin{document}

\title{Quasi steady state approximation of the small clusters in Becker-D\"oring  equations leads to boundary 
conditions in the Lifshitz-Slyozov limit}

\author{{Julien Deschamps\thanks{DIMA, Universit\`a degli Studi di Genova, Italy (deschamps@dima.unige.it)}}\and{Erwan 
Hingant\thanks{Departamento~de~Matem\'atica,~Universidade~Federal~de~Campina~Grande, Paraiba, ~Brasil 
(erwan@mat.ufcg.edu.br).}}\and{Romain Yvinec\thanks{Physiologie de la Reproduction et des Comportements, Institut 
National de la Recherche Agronomique (INRA) UMR85, CNRS-Universit\'e Fran\c{c}ois-Rabelais UMR7247, IFCE, Nouzilly, 
37380 France (romain.yvinec@tours.inra.fr)} }}

\thanks{EH thanks the financial support of CAPES/IMPA (Brazil)}


\pagestyle{myheadings} \markboth{Becker-D\"oring  equations to the Lifshitz-Slyozov limit with boundaries}{J. 
Deschamps, E. Hingant and R. Yvinec} 

\maketitle

\begin{abstract}
This papers addresses the connection between two classical models of phase transition phenomena 
describing different stages of the growth of clusters. The Becker-D\"oring model 
(BD) describes discrete-sized clusters through an infinite set of ordinary differential equations. The Lifshitz-Slyozov equation (LS) is a 
transport partial differential equation on the continuous half-line $x\in (0,+\infty)$. We introduce a scaling parameter $\veps>0$, which accounts for the grid size of the state space in the BD model, and recover the LS model in the limit $\veps\to 0$. 
The connection has been already proven in the context of outgoing characteristic at the boundary $x=0$ for the LS model, when small clusters tend to shrink. The main novelty of this work resides in a new estimate on the growth of small clusters, which behave at a fast time scale.
Through a rigorous quasi steady state approximation, we derive boundary conditions for the incoming characteristic case, when small clusters tend to grow.

\end{abstract}

\begin{keywords}
Becker-D\"oring system, Lifshitz-Slyozov equation, Boundary value for transport equation,  quasi-steady state 
approximation, hydrodynamic limit.
\end{keywords}

\begin{AMS}
34E13, 35F31, 82C26, 82C70
\end{AMS}

\section{Introduction}

This papers addresses the mathematical connection between two classical models of phase transition phenomena 
describing different stages of the growth of clusters (or polymers, or aggregates). The first one is the 
Becker-D\"oring model (BD), first introduced in \cite{BD}, that describes the earlier stages of cluster growth, 
at a small scale. Cluster of particles may increase or decrease their size one-by-one, capturing (aggregation process) 
or shedding (fragmentation process) one particle, according to the set of chemical reactions 
\begin{equation*}
  \ds C_1 + C_i \ds \xrightleftharpoons  \ds C_{i+1} \, \quad i\geq 1\,,
\end{equation*}
where $C_i$ stands for the clusters consisting of $i$ particles, $C_1$ being the single {\it free} particle. In 
its mean-field version, the BD model is an infinite set of ordinary differential equations for the time 
evolution of each concentrations (numbers per unit of volume) of clusters made of $i$ particles. In this works we focus 
on a dimensionless BD model that involves a small parameter $\veps>0$. The standard scaling procedure is detailed in 
Appendix \ref{annex:adimensionalixation}. We denote by $c^\veps_i(t)$ the concentration at time $t\geq 0$ of 
clusters consisting of $i\geq 2$ particles and $u^\veps$ for the concentration of {\it free} particles (clusters  of 
size $1$), where we make explicit the dependence on $\veps>0$. The dimensionless system reads 
\begin{equation}\label{sys:BD_rescaled}
 \begin{array}{rclr}
 \ds \frac{d}{dt} u^\veps & = & \ds -  \veps  J_{1}^\veps - \veps  \sum_{i \geq {1}} J_i^\veps \,, & t\geq 0\,,\\[0.8em]
 \ds \frac{d}{dt} c_{i}^\veps & = & \ds \frac{1}{\veps} \Big{[} J_{i-1}^\veps -  J_i^\veps \Big{]}\,, & i\geq 2 
\,, \ t\geq 0\,, \\[0.8em]
\end{array}
\end{equation}
where the fluxes are defined by
\begin{equation}\label{sys:BD_rescaled_flux}
 J_{1}^\veps =  \alpha^\veps (u^\veps)^2- \veps^{\eta} \beta^\veps c_{2}^\veps\,,\ \text{ and } \ J_{i}^\veps = 
a_i^\veps u^\veps c_i^\veps - b_{i+1}^\veps c_{i+1}^\veps\,, \quad i\geq 2\,.
\end{equation}
Here, coefficients $a_i^\veps$ and $b_{i+1}^\veps$, for $i\geq 2$, denote respectively the rate of aggregation and 
fragmentation ($\veps$-dependent), while $\alpha^\veps$ and $\beta^\veps$ denote respectively the 
first rate of aggregation ($i=1$) and the first rate of fragmentation ($i=2$). Finally, $\eta$ is an exponent that 
stands for the strength of the first fragmentation rate, on which strongly depends our results (see also Section 
\ref{sec:disc} for discussions). Observe that such model (at least formally) preserves the total number of particles 
(no source nor sink), that is
\begin{equation}\label{eq:mass}
u^\veps(t) + \sum_{i\geq 2} \veps^2  i c_i^\veps(t) = m^\veps  \,, \quad \forall t\geq 0\,.
\end{equation}
The constant $m^\veps$ is entirely determined by the initial conditions at $t=0$ given by $u^{\rm in, \veps}$ and 
$(c^{\rm in,\veps}_i)_{i\geq 2}$, non-negatives and $\veps$-dependent.  
For theoretical studies on the well-posedness and long-time behaviour of the deterministic Becker-D\"oring model (with 
$\veps=1$), we 
refer the interested reader to \cite{Penrose2001,Wattis2008,Laurencot2002a} among many others.

\smallskip

The second model of phase transition is the Lifshitz-Slyozov model (LS) introduced in \cite{LS}. It classically 
describes the late phase of cluster growth, at a ``macroscopic scale''. The LS model consists in a partial 
differential equation (of nonlinear transport type) for the time evolution of the size distribution function $f(t,x)$ 
of clusters of (continuous) size $x>0$ at time $t\geq 0$, together with an equation stating the conservation of matter,

\begin{equation}\label{eq:LS}
 \begin{array}{ll}
  \ds \frac{\partial f}{\partial t} + \frac{\partial [ (a(x)u(t) - b(x))f(t,x) ] }{\partial x}=0 \,, & \quad t\geq0\,, \ x>0\,, \\[1.5em] 
  \ds u(t)  +  \int_0^\infty xf(t,x) = m \, , & \quad t\geq 0\,, 
 \end{array}
\end{equation}
where $a$ and $b$ are functions of the size, respectively for the aggregation and fragmentation rates. The constant $m$ 
plays the same role as in the BD model. Various authors studied this equation when the flux point outward at $x=0$ 
(when small clusters tends to fragment), namely if a condition like $a(0)u(t) - b(0) < 0$ holds, see   
\cite{Laurenccot2002,Collet2002a,Niethammer2008} among other for theoretical studies and technical assumptions. Indeed, 
in that case, uniqueness of weak solution to the limit system \eqref{eq:LS} holds. But, recent applications in biology 
have raised this problem to include {\it nucleation} in this equation (small clusters tends to aggregate), for instance 
in \cite{Prigent2012,Helal2013,Banks2014}. These cases consider fluxes that point inward at $x=0$, and it lacks a 
\textit{boundary condition} to \eqref{eq:LS} to be well-defined. Remark, some boundary conditions was conjectured {\it 
e.g.} in \cite{Collet2002,COLLET2004,Prigent2012} but never rigorously proved. 

\medskip

In this works we aim to recover a solution of the LS equation and construct proper boundary condition, 
departing from the BD equation \eqref{sys:BD_rescaled} as the parameter $\veps$ goes to $0$. This connection has been 
proved in \cite{Collet2002,Laurencot2002a} for the classical case of outgoing characteristic. The authors represent the 
dynamics of the BD model by a  density function on a continuous size space. Accordingly, the 
size of the clusters are represented by a continuous variable $x>0$, and we let, for all $\veps>0$,
\begin{equation} \label{eq:def_feps}
 f^\veps(t,x) = \sum_{i\geq 2} c_i^\veps(t)\indic{\Lambda_i^\veps}(x)\,, \quad x\geq 0\,, \ t\geq 0\,,
\end{equation}
where for each $i\geq 2$, $\Lambda_i^\veps=[(i-1/2)\veps, (i+1/2)\veps)$. We denote for the remainder $f^{\rm in,\, 
\veps}:= f^\veps(0,x)$.  Hence, each cluster of (discrete) size 
initially $i\geq2$ is seen as a cluster of size roughly $i\veps\in\Rb_+$. This scaling consists in an acceleration of 
the fluxes (by $1/\veps$) in Eq.~\eqref{sys:BD_rescaled} so that it can reach an (asymptotically) {\it infinite} size $i = x/\veps$ in 
finite time. Then, an appropriate scaling of the initial conditions, with a large excess of particles, together with 
the rate functions entails that $\{f^\veps\}$ converges to a solution of the LS model, Eq. \eqref{eq:LS}. Here we use the 
same strategy to construct solutions to \eqref{eq:LS} and we derive appropriate flux conditions at $x=0$  when 
the reaction rates behave near $0$ as a power-law, that is
\begin{equation*}
 a(x) \sim_{0^+} \overline{a}x^{r_a}\, \text{ and }\ b(x) \sim_{0^+} \overline{b}x^{r_b}\,,
\end{equation*}
with $\overline{a}$ and $\overline{b}$ positives, and the exponents $0\leq r_a <1$, $r_a \leq r_b$ which corresponds to 
entrant characteristic. Note, if $r_a=r_b$ we suppose moreover that $u(t)>\overline{b}/\overline{a}$. 

\medskip

\begin{remk}
Another scaling approach considers the large time behavior of the Becker-D\"oring model, 
and relates the dynamics of large clusters to solutions of various version of Lifshitz-Slyozov equations. It is the 
so-called theory of Ostwald ripening, see \cite{Penrose1997,Niethammer2005,Velazquez1998}.
\end{remk}

\medskip 

We emphasize that the novelty of our work resides in the rigorous derivation of a boundary condition at $x=0$ for the LS model, Eq. \eqref{eq:LS}, which is needed in the case of entrant characteristic. Thanks to new estimates on the BD model (Proposition \ref{prop:bound_laplace}), we identify the
 limit of quantities related to the (finite size) $c_i^\veps$'s by a quasi steady state approximation. From this, we were able to 
found various possible boundary conditions depending on different scaling hypotheses on the first fragmentation rate, 
{\it i.e.} according to the value of $\eta$ in \eqref{sys:BD_rescaled_flux}, with respect to $r_a$ and $r_b$. Namely, 
we found three distinct cases for {\it slow} de-nucleation rate ($\eta>r_a$) in Theorem \ref{thm:LS_slow}, {\it compensated} one ($\eta=r_a$) 
in Theorem \ref{thm:LS_compensated} and {\it fast} one ($\eta<r_a$) in Theorem \ref{thm:LS_fast}. We 
obtained these main results for measure-valued solution to the LS equation, in Section \ref{sec:results}. But in 
Section \ref{sec:density}, we improve this result to obtain density solution when  $a$ and $b$ are exact power law. Let us give an example of our result to illustrate it. 

\medskip

\noindent \textsc{Illustrating result.} \textit{Assume, for all $x\geq 0$,  $a(x) = \overline a x^{r_a}$ and 
$b(x)=\overline b 
x^{r_b}$ with $r_a < r_b$ and  $\eta = 
r_b$. We found the limit of $\{f^\veps\}$ is a solution of Eq. \eqref{eq:LS}, with the boundary value given by, for 
all 
$t\geq 0$ where $u(t)>0$,
\[ \lim_{x\to 0^+} (a(x)u(t)-b(x))f(t,x) = \alpha u(t)^2\,, \]
where $\alpha$ is the limit of $\alpha^\veps$ in \eqref{sys:BD_rescaled_flux}. In other terms we recover the behavior of $f$ near $x=0$ with the free 
particles concentration through the limit
\[ \lim_{x\to 0^+} x^{r_a} f(t,x) = \frac{\alpha}{\overline a} u(t)\,. \]
}

\medskip

\noindent {\it Organization of the paper.} In the next Section \ref{sec:scaling} we introduce the main assumptions 
together with some properties of the BD model. Then, in Section \ref{sec:results} we state our main result on 
measure-valued solution to LS with boundary term. To do so we improved previous compactness arguments on the re-scaled 
density \eqref{eq:def_feps}, so that the boundary term can be taken into account in Section \ref{sec:compact}. It is 
achieved thanks to a new estimate on the growth of the ``small'' sized clusters (point-wise estimates of the density 
approximation, see Proposition \ref{prop:bound_laplace}). The identification of the boundary term in Section 
\ref{sec:identif} follows from a rigorous quasi-steady-state approximation of the small-sized clusters, in analogy with 
slow-fast systems, and allow proving the main theorems. Finally, we extend some results to a convergence in density, 
see Section \ref{sec:density}. We conclude by a discussion and further directions in Section \ref{sec:disc}.

\medskip

\noindent \textit{Notations.} For any $U\subseteq \Rb$, we denote by $\Cc(U)$, respectively $\Cc_c(U)$ and $\Cc_b(U)$, 
the space of continuous function on $U$, respectively with compact support on $U$, and  bounded on $U$. We 
denote by $\Mc_f(U)$ the set of non-negative and finite 
regular Borel measures on $U$. We will 
use the classical $weak-*$ convergence (sometimes called {\it vague}) on $\Mc_f(U)$, \ie~ the topology given by 
pointwise convergence for test functions  $\vphi\in \Cc_c(U)$, {\it i.e.} for $\{\nu^\veps\}$ and $\nu$ in $\Mc_f(U)$, 
we say $\nu^\veps$ converge to $\nu$ in $\Mc_f(U)$ (in the $weak-*$ topology) if and only if for all $\vphi\in 
\Cc_c(U)$
$$ \ds \int_0^\infty \vphi(x) \nu^\veps(dx) \to \int_0^\infty \vphi(x) \nu(dx) \,. $$

\section{Preliminaries and Assumptions}\label{sec:scaling}

In this section we recall some known results on the BD system together with assumptions for the main 
results of this paper. First of all, we refer the reader to Theorem 2.1 in \cite{Laurencot2002a} for existence and 
uniqueness of (non-negative) global solution to \eqref{sys:BD_rescaled} satisfying the balance of mass  
\eqref{eq:mass}  at fixed $\veps>0$. Well-posedness follows from growth conditions on the kinetic rates, namely we 
assume

\medskip

\begin{hyp}\label{assumption_1}
 The rates $\alpha^\veps$, $\beta^\veps$, $(a_i^\veps)_{i\geq2}$ and $(b_i^\veps)_{i\geq3}$ are positives and, for each 
$\veps>0$, there exists a constant  $K(\veps)>0$ such that 
 \begin{equation*}
  \begin{array}{ll}
   \ds a_{i+1}^\veps - a_i^\veps \leq K(\veps)\,, & i\geq 2\,, \\[0.8em]
   \ds b_i^\veps - b_{i+1}^\veps \leq K(\veps)\,, & i\geq 3\,. 
  \end{array}
 \end{equation*}
\end{hyp}
From now, for each $\veps>0$, we assume $u^\veps$ and $(c_i^\veps)_{i\geq 2}$ are non-negatives and define a solution to
\eqref{sys:BD_rescaled}, that belongs (each) to $\Cc([0,+\infty))$.

\medskip

\noindent We  construct aggregation and fragmentation rates as functions on $\Rb_+$ (similarly to $f^\veps$), namely, for  
each $\veps>0$ we define, for all $x$ in $\Rb_+$,
\begin{equation*} 
  a^\veps(x)  := \ds  \sum_{i \geq 2} a_i^\veps \indic{\Lambda_i^\veps}(x)\,, \text{ and }  b^\veps(x) := \ds  \sum_{i 
\geq 3} b_i^\veps \indic{\Lambda_i^\veps}(x)\,.
\end{equation*}
Now, we are able to derive a weak equation on the density approximation $f^\veps$, for each $\veps>0$, in which we will 
pass to the limit to recover weak solutions to Eq. \eqref{eq:LS}. 

\medskip

\begin{propo}\label{prop:weak_eps}
Under Assumption \ref{assumption_1}, let $\{f^\veps\}$ constructed by 
Eq.~\eqref{eq:def_feps}. For each $\veps>0$, and all $\vphi\in W^{1,\infty}_{loc}(\Rb_+)$ such that 
$\partial_x\vphi \in L^\infty(\Rb_+)$, we have, for all $t\geq 0$,
\begin{multline}\label{eq:weak_form_eps}
 \int_0^{+\infty} f^\varepsilon(t,x) \vphi(x)\, dx  \\   =\int_0^{+\infty} f^{in,\veps}(x) \vphi(x)\, dx +  \int_0^t [ 
 \alpha^\veps u^\veps(s)^2  -  \beta^\veps \veps^{\eta} c_2^\veps(s)] \left(\frac{1}{\veps} \int_{\Lambda_2^\veps}  
 \vphi(x)\, dx\right) ds \\
 + \int_0^t \int_0^{+\infty} \left[ a^\veps(x) u^\veps(s) f^\veps(s,x) \Delta_{\veps}\vphi(x)  -   b^\veps(x) 
 f^\veps(s,x) \Delta_{-\veps}\vphi(x) \right]\, dx\, ds\,,
\end{multline}
where $\Delta_h \vphi(x) = (\vphi(x+h) - \vphi(x))/h$, for $h\in\Rb$, and
\begin{equation} \label{eq:masscons_eps}
 u^\veps(t) + \int_0^\infty x f^\veps(t,x) \, dx = m^\veps.
\end{equation}
\end{propo}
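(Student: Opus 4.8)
The plan is to derive \eqref{eq:weak_form_eps} directly from the integrated form of the ODE system \eqref{sys:BD_rescaled} by a discrete summation by parts, first for test functions with bounded support and then for the full class by a truncation argument, and to read off \eqref{eq:masscons_eps} from the mass balance \eqref{eq:mass}. Throughout set $\psi_i:=\int_{\Lambda_i^\veps}\vphi(x)\,dx$ for $i\geq2$. Since the intervals $\{\Lambda_i^\veps\}_{i\geq2}$ are disjoint of length $\veps$, the definition \eqref{eq:def_feps} gives $\int_0^\infty f^\veps(t,x)\vphi(x)\,dx=\sum_{i\geq2}c_i^\veps(t)\,\psi_i$. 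The bound $\partial_x\vphi\in L^\infty(\Rb_+)$ forces at most linear growth $|\vphi(x)|\leq C(1+x)$, so that together with the first--moment control $\sum_{i\geq2}\veps^2 i\,c_i^\veps(t)\leq m^\veps$ coming from \eqref{eq:mass} this series converges absolutely.

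First I would treat the case where $\vphi$ has bounded support, so that only finitely many $\psi_i$ are nonzero and all sums below are finite. Integrating the second line of \eqref{sys:BD_rescaled} in time (legitimate since each $c_i^\veps\in\Cc([0,+\infty))$ solves the system) and summing against $\psi_i$,
\[
\sum_{i\geq2}\big(c_i^\veps(t)-c_i^{in,\veps}\big)\psi_i=\frac{1}{\veps}\int_0^t\sum_{i\geq2}\big(J_{i-1}^\veps-J_i^\veps\big)\psi_i\,ds,
\]
where the left-hand side is $\int_0^\infty (f^\veps(t,\cdot)-f^{in,\veps})\vphi\,dx$ by the same identity. A summation by parts (Abel transform), whose boundary term at infinity is absent because the sum is finite, gives
\[
\sum_{i\geq2}\big(J_{i-1}^\veps-J_i^\veps\big)\psi_i=J_1^\veps\,\psi_2+\sum_{i\geq2}J_i^\veps\big(\psi_{i+1}-\psi_i\big).
\]
Dividing by $\veps$, the first term is precisely the boundary contribution in \eqref{eq:weak_form_eps} once $J_1^\veps=\alpha^\veps(u^\veps)^2-\veps^\eta\beta^\veps c_2^\veps$ is inserted, since $\tfrac{1}{\veps}\psi_2=\tfrac{1}{\veps}\int_{\Lambda_2^\veps}\vphi\,dx$.

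The identification of the flux terms rests on the translation relation $\Lambda_{i+1}^\veps=\Lambda_i^\veps+\veps$, which yields $\tfrac{1}{\veps}(\psi_{i+1}-\psi_i)=\int_{\Lambda_i^\veps}\Delta_\veps\vphi(x)\,dx$. Substituting $J_i^\veps=a_i^\veps u^\veps c_i^\veps-b_{i+1}^\veps c_{i+1}^\veps$ splits the sum. By the definitions of $a^\veps$ and $f^\veps$ and the disjointness of supports, the aggregation part equals $\int_0^\infty a^\veps(x)u^\veps f^\veps\,\Delta_\veps\vphi\,dx$. For the fragmentation part I reindex $k=i+1$ and use $\Lambda_{k-1}^\veps=\Lambda_k^\veps-\veps$ together with the identity $\Delta_\veps\vphi(x-\veps)=\Delta_{-\veps}\vphi(x)$ to rewrite it as $-\int_0^\infty b^\veps(x)f^\veps\,\Delta_{-\veps}\vphi\,dx$, recalling that $b^\veps$ carries no $i=2$ term so the two index ranges match. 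Integrating in time on $[0,t]$ proves \eqref{eq:weak_form_eps} for compactly supported $\vphi$.

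To pass to a general $\vphi\in W^{1,\infty}_{loc}(\Rb_+)$ with $\partial_x\vphi\in L^\infty(\Rb_+)$ I would truncate, multiplying $\vphi$ by a cut-off at scale $R$ and letting $R\to\infty$. Each contribution is dominated, uniformly in $R$, by the rates $a^\veps,b^\veps$ of at most linear growth (Assumption \ref{assumption_1}) times the bounded increments $\Delta_{\pm\veps}\vphi$ integrated against $f^\veps$, and the tails are controlled by the finite first moment $\sum_{i\geq2}\veps^2 i\,c_i^\veps$; hence dominated convergence applies on each factor. Finally \eqref{eq:masscons_eps} follows from the elementary computation $\int_{\Lambda_i^\veps}x\,dx=\veps^2 i$, whence $\int_0^\infty x f^\veps(t,x)\,dx=\sum_{i\geq2}\veps^2 i\,c_i^\veps(t)$, and \eqref{eq:mass} concludes. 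I expect the only delicate point to be this last truncation step: one must verify that the absolute convergence furnished by the first moment is uniform enough to exchange the limit with both the time integral and the infinite sum, all the other manipulations being finite and purely algebraic.
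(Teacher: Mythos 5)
Your overall strategy is sound and is, in substance, the proof that the paper leaves implicit: the paper itself does not prove this proposition but cites \cite[Lemma 4.1]{Laurencot2002a}, and your discrete summation by parts, the translation identity $\tfrac1\veps(\psi_{i+1}-\psi_i)=\int_{\Lambda_i^\veps}\Delta_\veps\vphi\,dx$, the reindexing $k=i+1$ with $\Delta_\veps\vphi(x-\veps)=\Delta_{-\veps}\vphi(x)$, and the bookkeeping of the $i=2$ boundary flux and of the missing $i=2$ term in $b^\veps$ are exactly the mechanics behind that lemma. The derivation of \eqref{eq:masscons_eps} from \eqref{eq:mass} via $\int_{\Lambda_i^\veps}x\,dx=\veps^2 i$ is also correct, granted (as the paper does) that the solutions furnished by Theorem 2.1 of \cite{Laurencot2002a} satisfy the mass balance.

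There is, however, a genuine gap in the step you yourself flag as delicate. You justify the domination of the fragmentation term in the truncation $R\to\infty$ by asserting that Assumption \ref{assumption_1} gives ``at most linear growth'' of $b^\veps$. It does not: the hypothesis is $b_i^\veps-b_{i+1}^\veps\leq K(\veps)$, which bounds how much the fragmentation coefficients may \emph{decrease} from one size to the next, and places no upper bound whatsoever on their growth (e.g.\ $b_i^\veps=2^i$ is admissible). Only the aggregation coefficients inherit linear growth, via $a_i^\veps\leq a_2^\veps+K(\veps)(i-2)$. Consequently the dominating function $C\,b^\veps f^\veps$ for the term $\int b^\veps f^\veps\,\Delta_{-\veps}(\vphi\chi_R)$ is not known to be integrable from the first-moment bound alone, and without further input the fragmentation integral on the right-hand side of \eqref{eq:weak_form_eps} is not even known to be finite. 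The repair is available in the paper's framework: a solution in the sense of Theorem 2.1 of \cite{Laurencot2002a} (Ball--Carr--Penrose type, which is what the paper means by ``define a solution to \eqref{sys:BD_rescaled}'') carries by definition the flux integrability $\int_0^t\sum_{i\geq 3}b_i^\veps c_i^\veps\,ds<+\infty$ (indeed the equation for $u^\veps$ in \eqref{sys:BD_rescaled} would otherwise be meaningless); invoking this, together with your uniform bound on $\Delta_{\pm\veps}(\vphi\chi_R)$, makes the dominated convergence argument valid and closes the proof.
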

This result follows from \cite[Lemma 4.1]{Laurencot2002a}, which allows taking $\vphi(x)=x$ in the equation. In the next 
assumption we assume standard hypotheses on the convergence of the rate functions and their sub-linear control, see 
also \cite{Collet2002,Laurencot2002a}.

\medskip

\begin{hyp} 
{\normalfont Convergence of the rate functions.}
let $\alpha$ and $\beta$ be two positive numbers, and let $a$ and $b$ be two non-negative continuous functions on 
$[0,+\infty)$ that are positive on $x\in(0,+\infty)$. Then, as $\veps\to 0 $, we suppose that   
\begin{align}
 & \{ \alpha^\veps \} \text{ converges towards } \alpha\,. \tag{H1} \label{H1}\\[0.5em]
 & \{ \beta^\veps \} \text{ converges towards } \beta\,. \label{H2} \tag{H2} \\[0.5em]
 & \{a^\veps(\ldotp)\} \text{ converges uniformly on any compact set of } [0,+\infty) \text{ towards } a(\ldotp) \text{ 
and } \nonumber \\
 & \hspace{3em}\exists K_a>0 \text{ s.t. } a^\veps(x) \leq K_a (1+x), \ \forall x\in \Rb_+ \text{ and } \forall 
\veps>0\,. \label{H3} \tag{H3} \\[0.5em]
 & \{b^\veps(\ldotp)\} \text{ converges uniformly on any compact set of } [0,+\infty) \text{ towards } b(\ldotp) \text{ 
and } \nonumber \\
 & \hspace{3em}\exists K_b>0 \text{ s.t. } b^\veps(x) \leq K_b (1+x), \ \forall x\in \Rb_+ \text{ and } \forall 
\veps>0\,. \label{H4} \tag{H4}
\end{align}
\end{hyp}
We recall a discussion on the scaling of the coefficients is differed to Section \ref{sec:disc}. The next assumption details the behaviour of the rate functions around $0$. This is the essential assumption which 
allow us to identify  the limit of $\veps^\eta c_2^\veps$ in the second integral in the right hand side of 
\eqref{eq:weak_form_eps}.

\medskip

\begin{hyp} \label{hyp:coef_BD} {\normalfont Behavior of the rate functions near $0$.}
 We suppose there exist $r_a \in [0,1)$, $r_b\geq r_a$, $\overline{a}>0$, $\overline{b}>0$ such that
 \begin{equation} \label{H5} \tag{H5}
  \begin{array}{lc|cl}
   a(x) \sim_{0^+} \overline{a}x^{r_a}\,, & & & b(x) \sim_{0^+} \overline{b}x^{r_b}\,, \\[0.8em]
   a^{\veps}(\veps i) = a(\veps i) + o((\veps i)^{r_a})\,,  & & & 
b^{\veps}(\veps i) = b(\veps i) + o((\veps 
i)^{r_b})\,,
  \end{array}
 \end{equation}
 where $o$ is the Landau notation, \textit{i.e.} $o(x)/x\to 0$ as $x \to 0$.
\end{hyp}

\medskip

\noindent Note, if $0 \leq r_b < r_a$ or $r_a\geq 1$, the kinetic rates $a$ and $b$ are related to outgoing 
characteristics for which the theory already exists, see \cite{Laurencot2002a,Collet2002}. Finally, we assume some 
control on the initial conditions. For convenience, we define the quantity
\begin{equation} \label{def:rho}
  \rho := \lim_{x\to 0^+} \frac{b(x)}{a(x)} = \lim_{x\to 0^+} \frac{\overline b}{\overline a} x^{r_b-r_a} \in 
[0,+\infty)\,.
\end{equation}
It determines whether the characteristic at $x=0$ is ongoing or outgoing, according to whether $u(t)$ is greater or less than 
$\rho$ in \eqref{eq:mass}. 

Then, we introduce a set of functions which shall play a key role. We denote by $\Uc$ the set of 
non-negative convex functions $\Phi$ belonging to $\Cc^1([0,+\infty))$ and piecewise  
$\Cc^{2}([0,+\infty))$ such that $\Phi(0)=0$, $\Phi'$ is concave, $\Phi'(0)\geq 0$, and
\[ \lim_{x\to +\infty} \frac{\Phi(x)}{x} = + \infty\,. \]
Note that $\Phi$ is increasing. These functions have remarkable properties when conjugate to the structure of the 
Becker-D\"oring system and provide important estimates, see for instance \cite{Laurenccot2002}. 

\medskip

\begin{hyp} \label{hyp:initial} {\normalfont Initial conditions.}
We assume there exists $u^{\rm in}>\rho$ and a non-negative measure $\mu^{\rm in}\in\Mc_f([0,+\infty))$ such 
that $u^{\rm in,\, \veps}$ converges to $u^{\rm in}$ in $\mathbb R_+$ and $\{f^{\rm in,\, \veps} \}$  converges 
to $\mu^{\rm in}$, in the $weak-*$ topology of $\Mc_f([0,+\infty))$. Moreover, we assume there exists $\Phi\in\Uc$ such 
that 
 \begin{equation} \label{H6}\tag{H6}
   \ds \sup_{\veps>0} \int_0^\infty \Phi(x) f^{\rm in,\, \veps}(x)dx < +\infty \,.
 \end{equation}
 En particular, we can define  
 \[m := u^{\rm in} + \int_0^\infty x \mu^{\rm in}(dx)\,.\]
 Moreover, we suppose that for all $z\in(0,1)$,
 \begin{equation} \label{H7} \tag{H7}
  \sup_{\veps>0} \, \sum_{i\geq 2} \veps^{r_a} c_i^{\rm in,\,\veps} e^{-iz} < +\infty\,. \\
 \end{equation}
\end{hyp}

\medskip

\begin{remk}
$m$ is well-defined since $weak-*$ convergence plus the extra-moment in \eqref{H6} give the limit
\[ \int_0^\infty x f^{\rm in,\, \veps}(dx)\to \int_0^\infty x \mu^{\rm in}(dx)\,. \]
See for instance \cite[Proof of Theorem 2.3]{Collet2002}.
\end{remk}

\medskip

\begin{remk}
In fact, we could obtain freely this $\Phi$ assuming a {\it stronger} weak convergence (against $(1+x)\vphi(x)$ for 
$\vphi$ bounded and continuous). See for instance \cite{Chauhoan1977} for the construction of such a $\Phi$. 
\end{remk}

\medskip

\begin{remk}
We highlight that  condition \eqref{H7} is not restrictive. For example, consider 
$f^{\rm in}(x)=x^{-r}$ on $(0,1)$ and $0$ elsewhere, with $r\leq r_a$. Then, consider $c_i^{\rm in,\, \veps} = 
(i\veps)^{-r}$ for $i\leq 1/\veps$, and $0$ 
elsewhere. We have that $\{f^{\rm in, \, \veps}\}$ trivially converges to $f^{\rm in}$ in the sense of \eqref{H6} and 
it satisfies \eqref{H7}. Note that we do not necessarily require the initial condition is composed of `` very large'' 
clusters (of size $i \gg 1/\veps$). 
\end{remk}

\section{Main results}\label{sec:results}

For the remainder of the paper, we always assume that $\{f^\veps\}$ is constructed by \eqref{eq:def_feps}, that 
$\{u^\veps\}$ is given by the balance \eqref{eq:masscons_eps}, and Assumption \ref{assumption_1} to Assumption 
\ref{hyp:initial} hold true. The next definition extends the notion of a solution to the LS model, Eq. \eqref{eq:LS}, 
with a general boundary condition, or {\it nucleation rate}. 

\medskip

\begin{defn}{\normalfont N-solution.}
Let $T>0$, a function $N \in L^\infty_{loc}(\Rb_+)$ called nucleation rate, $u^{\rm in}>\rho$,  a measure $\mu^{\rm 
in}\in\Mc_f([0,+\infty))$, and a measure-valued function $\mu\in L^\infty([0,T];\Mc_f([0,+\infty))$. We say that $\mu$ 
is a $N$-solution of the LS equation (in measure) on $[0,T]$ with mass $m$, when:

\smallskip

\noindent i) There exists a non-negative $u\in\Cc([0,T])$, such that $u(0)=u^{\rm in}$, 
\begin{equation*}
\inf_{t\in[0,T]} u(t)>\rho\,, \text{ and } \ \forall t\in[0,T]\,,\  u(t) + \int_0^\infty x \mu_t(dx) = m\,.
\end{equation*}
%

\smallskip

\noindent ii) For all $\varphi \in \Cc_c^1([0,T)\times[0,+\infty))$ and $t\in[0,T]$
\begin{multline}\label{eq:weak_LS_fast2}
   \int_{0}^T \int_{0}^{\infty} \big[ \partial_t \vphi(t,x) + (a(x)u(t)-b(x))\partial_x \vphi(t,x)\big]\mu(t,dx)\, dt \\
   + \int_0^\infty \vphi(0,x)\mu^{\rm in}(dx) + \int_0^T \vphi(s,0) N(u(s)) \,ds= 0 \,,
 \end{multline}
%
%
\end{defn}
We now state our main results. The first theorem, when $\eta>r_a$, corresponds to the case where the 
first fragmentation rate is too slow and does not contribute to the boundary value. Thus the nucleation rate is proportional to the 
number of encounter of free particles, namely $u(t)^2$ at time $t$. 

\medskip

\begin{thm} \label{thm:LS_slow}{\normalfont The {\it slow} de-nucleation case.}
Assume $\eta > r_a$ and let a sequence $\{\veps_n\}$ converging to $0$. There exists $T>0$,  a sub-sequence 
$\{\veps_{n'}\}$ of $\{\veps_n\}$, and $\mu$ a $N$-solution of LS 
with mass $m$, such that
\[f^{\veps_{n'}} \xrightharpoonup[n'\to+\infty]{} \mu\]
in $\Cc([0,T];w-*-\Mc_f([0,+\infty))$, and, for all $u\geq 0$,
\[ N(u) = \alpha u^2\,. \]
\end{thm}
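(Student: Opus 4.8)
The plan is to establish compactness for the family $\{f^{\veps_n}\}$, pass to the limit in the weak formulation \eqref{eq:weak_form_eps} of Proposition \ref{prop:weak_eps}, and identify the boundary term by controlling the small-cluster flux $\alpha^\veps u^\veps(s)^2 - \beta^\veps\veps^\eta c_2^\veps(s)$ in the regime $\eta>r_a$.

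\textbf{Compactness and the limit measure.} First I would use the mass balance \eqref{eq:masscons_eps} together with the extra-moment control \eqref{H6} (propagated in time via the $\Phi\in\Uc$ machinery) to get a uniform-in-$\veps$ bound on $\int_0^\infty(1+\Phi(x))f^\veps(t,x)\,dx$ on some time interval $[0,T]$, yielding tightness and hence, for each $t$, $weak$-$*$ precompactness of $\{f^\veps(t,\cdot)\}$ in $\Mc_f([0,+\infty))$. To upgrade to convergence in $\Cc([0,T];w$-$*$-$\Mc_f)$ I would show equicontinuity in time of $t\mapsto\int\vphi f^\veps(t,x)\,dx$ for $\vphi\in\Cc^1_c$, reading the time derivative directly from \eqref{eq:weak_form_eps} and bounding the two integral terms using the sublinear controls in \eqref{H3}--\eqref{H4}; an Arzel\`a--Ascoli / Aldous-type argument then extracts the subsequence $\{\veps_{n'}\}$ and a limit $\mu\in\Cc([0,T];w$-$*$-$\Mc_f)$. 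The mass balance passes to the limit using \eqref{H1} and the moment bound, giving $u\in\Cc([0,T])$ with $u(t)+\int_0^\infty x\mu_t(dx)=m$ and $\inf_t u(t)>\rho$ (the latter from $u^{\rm in}>\rho$ plus continuity on a possibly shortened $T$).

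\textbf{Passing to the limit in the bulk.} In the third line of \eqref{eq:weak_form_eps} I would replace the finite-difference operators $\Delta_{\pm\veps}\vphi$ by $\partial_x\vphi$ (uniform convergence on compact support of $\vphi$ for $\vphi\in\Cc^1_c$), use \eqref{H3}--\eqref{H4} to pass $a^\veps\to a$, $b^\veps\to b$ uniformly on compacts, and combine with the established $weak$-$*$ convergence of $f^\veps$ to recover $\int_0^T\!\int_0^\infty(a(x)u(t)-b(x))\partial_x\vphi\,\mu(t,dx)\,dt$. The initial-data term converges by \eqref{H6}. The delicate point is that the test functions in the limiting $N$-solution definition are $\Cc^1_c([0,T)\times[0,+\infty))$ and need not vanish at $x=0$, so the $x=0$ boundary is genuinely seen.

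\textbf{Identifying the boundary term (the main obstacle).} The crux is the second integral, $\int_0^t[\alpha^\veps u^\veps(s)^2-\beta^\veps\veps^\eta c_2^\veps(s)]\,(\tfrac1\veps\int_{\Lambda_2^\veps}\vphi)\,ds$. The prefactor $\tfrac1\veps\int_{\Lambda_2^\veps}\vphi(x)\,dx\to\vphi(0)$, so I must show the bracket converges (as a function of $s$) to $\alpha u(s)^2$, i.e.\ that $\beta^\veps\veps^\eta c_2^\veps(s)\to0$. Here the hypothesis $\eta>r_a$ enters: by Proposition \ref{prop:bound_laplace} (the new point-wise / Laplace-type estimate on small clusters) I expect a bound of the form $\veps^{r_a}c_2^\veps(s)\leq C$ uniformly, coming from \eqref{H7} and the fast-time-scale QSSA analysis; then $\beta^\veps\veps^\eta c_2^\veps=\beta^\veps\veps^{\eta-r_a}(\veps^{r_a}c_2^\veps)\leq C\beta^\veps\veps^{\eta-r_a}\to0$ since $\eta-r_a>0$ and $\beta^\veps\to\beta$. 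The honest difficulty is justifying the uniform smallness of $c_2^\veps$ \emph{for a.e.\ $s$ up to $t$} and making the interchange of limits rigorous (the small clusters evolve on the fast $1/\veps$ scale, so pointwise-in-time bounds are subtle); I would obtain an $L^1_s$ or dominated-convergence control so that $\int_0^t\beta^\veps\veps^\eta c_2^\veps(s)\,\vphi\,ds\to0$. Combining the three limits gives exactly \eqref{eq:weak_LS_fast2} with $N(u)=\alpha u^2$, completing the proof.
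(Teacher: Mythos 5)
Your overall strategy is the paper's own --- moment bounds plus Arzel\'a-Ascoli for compactness, Proposition \ref{prop:bound_laplace} to control the small clusters, and the decomposition $\veps^\eta c_2^\veps=\veps^{\eta-r_a}\,(\veps^{r_a}c_2^\veps)\to0$ --- but your compactness step contains a genuine gap in the \emph{order} of the argument. You claim that the equicontinuity of $t\mapsto\int\vphi\, f^\veps(t,x)\,dx$ for $\vphi\in\Cc^1_c([0,+\infty))$ follows from the sublinear controls \eqref{H3}--\eqref{H4}. It does not: since $\vphi$ need not vanish at $x=0$, the nucleation term in \eqref{eq:weak_form_eps} contributes $\int_t^{t+s}\beta^\veps\veps^\eta c_2^\veps(\sigma)\,d\sigma\cdot\bigl(\tfrac1\veps\int_{\Lambda_2^\veps}\vphi\bigr)$ with $\tfrac1\veps\int_{\Lambda_2^\veps}\vphi\approx\vphi(0)\neq0$, and the only a priori control available at that stage is $\veps\, c_2^\veps(t)\leq\|f^\veps(t)\|_{L^1}$, i.e.\ $\veps^\eta c_2^\veps\leq\veps^{\eta-1}\|f^\veps(t)\|_{L^1}$, which is useless when $\eta<1$ (e.g.\ $\eta=r_b<1$); the weak bound \eqref{bound_c_2_measure} is only $L^1$ in time and also does not give a modulus of continuity. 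The bound you actually need here is precisely Proposition \ref{prop:bound_laplace} --- but that proposition takes as \emph{hypothesis} the uniform convergence of $u^{\veps_n}$ to a limit $u$ with $\inf_{[0,T]}u>\rho$, whereas in your plan $u$ is only produced \emph{after} the compactness of $\{f^\veps\}$, from the mass balance. As written, your argument is circular.

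The paper breaks the circle by treating $u$ first (Lemma \ref{equicontinuity_u}): testing \eqref{eq:weak_form_eps} with $\vphi(x)=x$, the prefactor $\tfrac1\veps\int_{\Lambda_2^\veps}x\,dx=2\veps$ supplies an extra factor of $\veps$ that makes the crude bound $\veps\, c_2^\veps\leq\|f^\veps\|_{L^1}$ sufficient, so $\{u^\veps\}$ is equicontinuous with no information on $c_2^\veps$. Arzel\'a-Ascoli then gives $u^{\veps_{n'}}\to u$ uniformly, one shortens $T$ so that $\inf_{[0,T]}u>\rho$ (using $u^{\rm in}>\rho$ and continuity), and only then does Proposition \ref{prop:bound_laplace} apply along that subsequence, yielding $\sup_{t\in[0,T]}\veps^{r_a}c_2^{\veps_{n'}}(t)<\infty$. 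This is exactly what makes the equicontinuity of $\{f^{\veps_{n'}}\}$ in $\Mc_f([0,+\infty))$ possible (Lemma \ref{lem:equicontinuity}, which indeed assumes $\eta\geq r_a$ and the prior convergence of $u^{\veps_n}$), and hence the convergence in $\Cc([0,T];w-*-\Mc_f([0,+\infty)))$ via Lemma \ref{lem:compactness_weak_star_2}. Once the steps are in this order, the ``honest difficulty'' you flag in the identification disappears: the estimate is a genuine $L^\infty$-in-time bound, so $\sup_{t\in[0,T]}\veps^\eta c_2^\veps(t)=\veps^{\eta-r_a}\sup_{t\in[0,T]}\veps^{r_a}c_2^\veps(t)\to0$, and no a.e.-in-$s$ or dominated-convergence argument is needed --- the paper's identification is precisely this one line.
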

 
\begin{remk}
The space $\Cc([0,T];w-*-\Mc_f([0,+\infty)))$ has to be understand as measure-valued 
function that are continuous in time for the $weak-*$ topology on $\Mc_f([0,+\infty))$, {\it i.e.} for $\{\nu_t\} \in 
\Cc([0,T];w-*-\Mc_f([0,+\infty)))$, we have, for all $t\in[0,T]$ and $\vphi\in\Cc_c([0,+\infty))$,
\[t\mapsto \int_0^\infty \vphi(x)\nu_t(dx)\]
is continuous.
\end{remk}

\medskip 

\noindent The second theorem holds in the limit case when $\eta=r_a$, \ie~ the first fragmentation rate has the same order of 
magnitude than the aggregation rate ($i\geq2$). Compared to the first case, the nucleation rate is balanced by a function varying between $0$ 
and $1$.

\medskip

\begin{thm} \label{thm:LS_compensated} {\normalfont The {\it compensated} de-nucleation case.}
Assume $\eta =r_a$ and let a sequence $\{\veps_n\}$ converging to $0$. There exists $T>0$,  a sub-sequence 
$\{\veps_{n'}\}$ of $\{\veps_n\}$, and $\mu$ a $N$-solution of LS 
with mass $m$, such that
\[f^{\veps_{n'}} \xrightharpoonup[n'\to+\infty]{} \mu\]
in $\Cc([0,T];w-*-\Mc_f([0,+\infty))$, and, for all $u\geq 0$,
%
 \begin{equation*} 
  N(u) = \begin{cases}
	  \ds \alpha u^2 \frac{  u}{ u+\beta/(\bar a 2^{\eta})}\,, & if \ \eta=r_a <r_b\,,  \\[0.8em]
          \ds \alpha u^2 \frac{\overline{a}u-\overline{b}}{\overline{a}u-\overline{b}+\beta/2^\eta}\,, & if \  
\eta=r_a=r_b\,, 
         \end{cases}
 \end{equation*}
\end{thm}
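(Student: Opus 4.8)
The plan is to follow the same skeleton as Theorem~\ref{thm:LS_slow}, differing only in the identification of the boundary term. First I would use the a~priori bounds of Proposition~\ref{prop:bound_laplace} together with the mass balance \eqref{eq:masscons_eps} to extract from $\{\veps_n\}$ a sub-sequence $\{\veps_{n'}\}$ and a time $T>0$ along which $f^{\veps_{n'}}\to\mu$ in $\Cc([0,T];w\text{-}*\text{-}\Mc_f([0,+\infty)))$ and $u^{\veps_{n'}}\to u$ in $\Cc([0,T])$. The condition $\inf_t u(t)>\rho$ and the mass identity $u(t)+\int_0^\infty x\,\mu_t(dx)=m$ pass to the limit exactly as in the slow case, using the extra moment controlled by $\Phi\in\Uc$ in \eqref{H6} to prevent escape of mass at infinity. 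It then remains to pass to the limit in the weak formulation \eqref{eq:weak_form_eps}.

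Passing to the limit in the transport part is routine: by \eqref{H3}--\eqref{H4}, the uniform convergence of $a^\veps,b^\veps$ on compacts, and $\Delta_{\pm\veps}\vphi\to\partial_x\vphi$ uniformly for $\vphi\in\Cc^1_c$, the integrand $a^\veps u^\veps f^\veps\Delta_\veps\vphi-b^\veps f^\veps\Delta_{-\veps}\vphi$ converges to $(a(x)u(t)-b(x))\partial_x\vphi$ integrated against $\mu$. Since $\tfrac1\veps\int_{\Lambda_2^\veps}\vphi\,dx\to\vphi(0)$, the whole problem reduces to identifying the time-integrated limit of the nucleation flux $J_1^\veps(s)=\alpha^\veps u^\veps(s)^2-\beta^\veps\veps^{r_a}c_2^\veps(s)$ (recall $\eta=r_a$); concretely I must show $\int_0^t J_1^{\veps_{n'}}(s)\,ds\to\int_0^t N(u(s))\,ds$ with the asserted $N$.

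The heart of the proof is the quasi-steady-state identification of $\gamma_2:=\lim\veps^{r_a}c_2^\veps$. Writing $\gamma_i^\veps:=\veps^{r_a}c_i^\veps$, condition \eqref{H7} and Proposition~\ref{prop:bound_laplace} keep these bounded in the generating-function scale. Integrating \eqref{sys:BD_rescaled} in time gives the \emph{flux equalisation} $\int_0^t(J_{i-1}^\veps-J_i^\veps)\,ds=\veps\,(c_i^\veps(t)-c_i^{\rm in,\veps})=\veps^{1-r_a}(\gamma_i^\veps(t)-\gamma_i^{\rm in,\veps})\to0$, since $r_a<1$; hence all small-cluster fluxes share a common time-integrated limit $\int_0^t J(u(s))\,ds$. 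Using \eqref{H5}, namely $a_i^\veps\sim\overline a(\veps i)^{r_a}$ and $b_{i+1}^\veps\sim\overline b(\veps(i+1))^{r_b}$, the relation $J=\lim\veps^{-r_a}(a_i^\veps u\gamma_i^\veps-b_{i+1}^\veps\gamma_{i+1}^\veps)$ becomes a linear recurrence for $\delta_i:=i^{r_a}\gamma_i$. When $r_a<r_b$ the fragmentation term carries the vanishing factor $\veps^{r_b-r_a}$ and drops out, giving $J=\overline a\,u\,\delta_i$ for every $i\ge2$, hence $\gamma_2=J/(\overline a 2^{r_a}u)$. When $r_a=r_b$ it survives, yielding $\delta_{i+1}=\tfrac{\overline a u}{\overline b}\delta_i-\tfrac{J}{\overline b}$; since $u>\rho=\overline b/\overline a$ makes the homogeneous ratio $\overline a u/\overline b>1$, the only bounded solution is the constant fixed point $\delta_i\equiv J/(\overline a u-\overline b)$, hence $\gamma_2=J/(2^{r_a}(\overline a u-\overline b))$. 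In both cases, closing the system with the $i=1$ balance $J=\alpha u^2-\beta\gamma_2$ and solving for $J$ produces exactly the two stated formulas for $N(u)=J(u)$, using $2^{r_a}=2^\eta$.

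The hard part is making this argument rigorous rather than formal. Because $c_2^\veps$ lives on the fast time scale, one cannot expect pointwise-in-$t$ convergence of $\veps^{r_a}c_2^\veps$; the convergence must be obtained in a weak, time-averaged sense, so I would test the discrete system against slowly varying weight sequences (equivalently, work with the Laplace-type generating function controlled by Proposition~\ref{prop:bound_laplace}) to show that the fast fluctuations average out inside $\int_0^t J_1^\veps\,ds$ and that the limiting profile solves the recurrence above. Two technical points require care: the uniform-in-$\veps$ integrability of the small-cluster contributions, which legitimises both the flux equalisation and the selection of the bounded solution of the recurrence (this is precisely where $u>\rho$ enters in the case $r_a=r_b$), and the uniform control, summed against the generating weights, of the remainder terms $o((\veps i)^{r_a})$ and $o((\veps i)^{r_b})$ in \eqref{H5}. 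Once these are in place, the passage to the limit in \eqref{eq:weak_form_eps} delivers \eqref{eq:weak_LS_fast2} with the claimed nucleation rate, completing the proof.
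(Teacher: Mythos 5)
Your proposal is correct and takes essentially the same route as the paper: extraction of weak-$*$ $L^\infty(0,T)$ limits $d_i$ of $\veps^{r_a}c_i^\veps$ from Proposition~\ref{prop:bound_laplace}, flux equalisation through the vanishing $\veps^{1-r_a}$ time-derivative term, selection of the unique subexponentially bounded solution of the resulting recurrence (which is exactly where $u>\rho$ enters when $r_a=r_b$), and closure via the $i=1$ balance $H_1=\alpha u^2-\beta d_2$. The only cosmetic difference is that you identify the bounded solution as the fixed point of the linear recurrence, whereas the paper invokes Penrose's explicit equilibrium formula with the $Q_i$'s; both arguments use the same bound \eqref{eq:bound_di_ra} to eliminate the geometrically growing mode and yield the stated $N(u)$.
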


\begin{remk}
 In the pure aggregation case, with $\beta^\veps=b_i^\veps=0$, then $b=0$ and $\beta=\overline b=0$. Our results in 
Theorem \ref{thm:LS_slow} and Theorem \ref{thm:LS_compensated} are consistent and remain true. 
\end{remk}

\medskip

\noindent Finally, the last theorem considers the case of a fast de-nucleation rate so that the flux at 
the boundary vanished, and the solution can reveal fast oscillation near $x=0$.

\medskip

\begin{thm} \label{thm:LS_fast} {\normalfont The {\it fast} de-nucleation rate.}
Assume $\eta <r_a$ and let a sequence $\{\veps_n\}$ converging to $0$. There exists $T>0$,  a sub-sequence 
$\{\veps_{n'}\}$ of $\{\veps_n\}$, and $\mu$ a $N$-solution of LS 
with mass $m$, such that
\[f^{\veps_{n'}} \xrightharpoonup[n'\to+\infty]{} \mu\]
in $weak-*-L^\infty(0,T;\Mc_f([0,+\infty))$, and, for all $u\geq 0$,
%
\[N(u)=0\,.\]
\end{thm}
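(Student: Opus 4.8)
The plan is to prove Theorem \ref{thm:LS_fast} by the same two-step strategy used for the previous two theorems: first extract a convergent subsequence $\{f^{\veps_{n'}}\}$ toward some measure-valued limit $\mu$ using the compactness arguments of Section \ref{sec:compact} (uniform mass bounds from \eqref{eq:masscons_eps}, the tightness provided by the moment bound \eqref{H6} and the function $\Phi\in\Uc$, and the uniform-in-time point-wise control of Proposition \ref{prop:bound_laplace}), and then pass to the limit in the weak formulation \eqref{eq:weak_form_eps} to show that $\mu$ is an $N$-solution. The whole novelty here lies in identifying the boundary term: I must show that the contribution of the second integral in \eqref{eq:weak_form_eps}, namely
\[
\int_0^t \big[\alpha^\veps u^\veps(s)^2 - \beta^\veps\veps^\eta c_2^\veps(s)\big]\Big(\tfrac{1}{\veps}\int_{\Lambda_2^\veps}\vphi(x)\,dx\Big)\,ds,
\]
converges to $\int_0^t \vphi(s,0)N(u(s))\,ds$ with $N\equiv 0$ in the regime $\eta<r_a$.

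Since $\tfrac{1}{\veps}\int_{\Lambda_2^\veps}\vphi\,dx\to\vphi(\cdot,0)$ by continuity, **the crux** is to show that the bracketed flux $\alpha^\veps u^\veps(s)^2-\beta^\veps\veps^\eta c_2^\veps(s)$ tends to $0$ in the appropriate (time-integrated) sense. This is exactly the rigorous quasi-steady-state analysis announced in Section \ref{sec:identif}: because $\eta<r_a$, the first fragmentation term is comparatively large, and the small clusters relax on a fast time scale so that the net flux $J_1^\veps$ into the continuum is asymptotically balanced to zero. Concretely, I expect to exploit the quasi-steady-state relation obtained by summing the $c_i^\veps$ equations over small sizes and using the fast-time-scale balance $a_i^\veps u^\veps c_i^\veps\simeq b_{i+1}^\veps c_{i+1}^\veps$ together with $J_1^\veps\simeq J_2^\veps\simeq\cdots$; combined with the scaling $\veps^\eta\beta^\veps$ dominating $\alpha^\veps$ at order $\veps^{r_a-\eta}\to\infty$, this forces $\alpha^\veps (u^\veps)^2-\beta^\veps\veps^\eta c_2^\veps\to 0$ after integration in time.

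**The main obstacle** will be to make this heuristic balance rigorous uniformly in $s$, because $c_2^\veps$ itself is not controlled point-wise in a trivial way — this is precisely where Proposition \ref{prop:bound_laplace} and hypothesis \eqref{H7} enter. I anticipate multiplying the $c_i^\veps$ equations by a test weight (a Laplace-type exponential $e^{-iz}$, matching the quantity controlled in \eqref{H7}) and summing, to obtain an identity in which the boundary flux appears isolated; the bound of Proposition \ref{prop:bound_laplace} then shows the relevant tail contributions are $O(1)$ uniformly, while the fast fragmentation coefficient $\veps^\eta\beta^\veps$, being of larger order than $\alpha^\veps$ relative to the aggregation scale, compels the flux to vanish in the limit. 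A delicate point is that convergence of the flux is only expected in a time-integrated (weak) sense — point-wise convergence in $s$ may fail due to the fast oscillations near $x=0$ mentioned in the theorem statement — so I would phrase the final limit as $\int_0^t[\alpha^\veps(u^\veps)^2-\beta^\veps\veps^\eta c_2^\veps]\,ds\to 0$ and conclude $N\equiv 0$.

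Finally, having identified $N=0$, I would verify that the limit $\mu$ satisfies all the conditions of an $N$-solution: the mass-conservation identity in item i) follows by passing to the limit in \eqref{eq:masscons_eps} using the uniform extra-moment control and weak-$*$ lower semicontinuity, the strict bound $\inf_t u(t)>\rho$ is inherited from $u^{\rm in}>\rho$ together with continuity of $u$ on a possibly shortened interval $[0,T]$, and the weak equation \eqref{eq:weak_LS_fast2} follows by collecting the limits of the transport terms (using \eqref{H3}, \eqref{H4}, the uniform convergence of $a^\veps,b^\veps$, and $\Delta_{\pm\veps}\vphi\to\partial_x\vphi$) with the now-vanishing boundary term. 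In this fast regime the weaker mode of convergence stated in the theorem (weak-$*$ in $L^\infty(0,T;\Mc_f)$ rather than $\Cc([0,T];w\text{-}*\text{-}\Mc_f)$) is expected, reflecting the loss of temporal continuity caused by the fast boundary dynamics.
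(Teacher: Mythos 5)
Your high-level skeleton does match the paper's: compactness via Lemma \ref{lem:compactness_weak_star} (only weak-$*$ in $L^\infty(0,T;\Mc_f)$, since equicontinuity at $x=0$ fails here), extraction of a limit of $\veps^\eta c_2^\veps$ in a time-integrated sense from the measure bound \eqref{bound_c_2_measure}, and a quasi-steady-state identification showing that the limiting boundary flux vanishes. However, the identification step --- the entire substance of this theorem --- has a genuine gap. Your stated mechanism, the ``fast-time-scale balance $a_i^\veps u^\veps c_i^\veps\simeq b_{i+1}^\veps c_{i+1}^\veps$ together with $J_1^\veps\simeq J_2^\veps\simeq\cdots$'', assumes the conclusion: those two statements combined \emph{are} the claim $J_1^\veps\to0$, so they cannot serve as inputs. (Also, $\veps^{r_a-\eta}\to0$, not $+\infty$, since $\eta<r_a$; it is precisely this vanishing factor, applied against the bounded measures $\veps^\eta c_2^\veps\,dt$, that kills the aggregation part of $J_2^\veps$.) What the paper actually does is pass to the limit in the single weak equation for $c_2^\veps$, Eq.~\eqref{limit_mesure_gamma2_temp}: the time-derivative side vanishes because $\veps^{r_a}c_2^\veps$ is uniformly bounded (Proposition \ref{prop:bound_laplace}) and $r_a<1$, leaving the relation \eqref{limit_mesure_gamma2} between $\Gamma_2$ and the limit of $\veps^{r_b}c_3^\veps$.

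The substantive omission is the case distinction $r_a<r_b$ versus $r_a=r_b$, which you never make. If $r_a<r_b$, then $\veps^{r_b}c_3^\veps=\veps^{r_b-r_a}\cdot\veps^{r_a}c_3^\veps\to0$ uniformly by Proposition \ref{prop:bound_laplace}, and \eqref{limit_mesure_gamma2} gives $\beta\Gamma_2(dt)=\alpha u(t)^2dt$ at once, hence $N=0$. But if $r_a=r_b$, the quantity $\veps^{r_a}c_3^\veps$ has a nontrivial weak-$*$ limit $d_3$, and one must \emph{prove} $d_3=0$. The paper does this by iterating the limit over every equation $i\geq3$, showing the limit fluxes $H_i=\overline a_i u\,d_i-\overline b_{i+1}d_{i+1}$ are a.e.\ equal to a single constant $H$, solving the recurrence to get $d_i=-\frac{H}{\overline b_i}\sum_{j=3}^i(\overline a u/\overline b)^{i-j}$, and observing that for $H\neq0$ this grows geometrically with ratio $\overline a u/\overline b>1$ (since $u>\rho$), contradicting the sub-exponential bound \eqref{eq:bound_di_ra_fastdenucl} valid for every small $z>0$. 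Your proposed alternative --- summing the equations against $e^{-iz}$ --- cannot replace this: in the limit the geometric sum telescopes, $(1-e^{-z})\sum_{j\geq2}e^{-jz}H=He^{-2z}$, so the Laplace identity only yields $\lim J_1^\veps=H$ and is silent on whether $H=0$; moreover the available bounds on $F^\veps(\cdot,z)$ degenerate like $1/(1-e^{-z})$ as $z\to0$, so you cannot send $z\to0$ to suppress the remainder. Without the recurrence-plus-growth argument, the proof is incomplete precisely in the case $\eta<r_a=r_b$.
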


\begin{remk}
In this case were not able to prove equicontinuity  of the density approximation in $\Mc_f([0,+\infty))$. For this 
case, in fact, it is true for  $\Mc_f((0,+\infty))$ (open in $x=0$). Also, we use the $weak-*$ topology on 
$L^\infty(0,T;\Mc_f([0,+\infty))$ which is the topology of the point-wise convergence against test functions in 
$L^1(0,T;\Cc_c([0,+\infty))$. 
\end{remk}

\medskip

\begin{remk}
 These limit theorems provide local in time existence and could be extended to a maximal time interval $[0,T)$ where 
$T =\sup \{ \tau \, :\, \inf_{t\in[0,\tau]} u(t) > \rho \}$. Also, uniqueness is not investigate here, but 
and appropriate result would entails convergence of the whole sequence without extraction. 
\end{remk}

\section{The compactness estimates}\label{sec:compact}

In this section we provide the main estimates to obtain sufficient compactness arguments to pass to the limit in 
\eqref{eq:weak_form_eps}-\eqref{eq:masscons_eps}. Remark for further estimations, under \eqref{H1} and \eqref{H2}, 
there exists a positive $K_{\alpha,\beta}$ such that, for all $\veps>0$,
\begin{equation}\label{bound_a1_b2}
  \alpha^{\veps}, \beta^{\veps}, \alpha, \beta \in (0,K_{\alpha,\beta}]\,,
\end{equation}
and \eqref{H3}-\eqref{H4} imply the limit functions also satisfy 
\begin{align}\label{linear_bound}
a(x) \leq K_a (1+x)\text{ and } b(x) \leq K_b (1+x), \ \forall x\in [0,+\infty).
\end{align}
We fix these constants for the remainder. 

\subsection{Uniform bound for the density approximation}

The first lemma gives basic estimates. In particular, it constructs the compact set of $\Mc_f([0,+\infty))$ in which the sequence of
solutions remains.

\medskip

\begin{lem}\label{lem:u1xphi1}
For all $T>0$,
\begin{align}
  & \sup_{\veps>0} \, \sup_{t\in[0,T]} \,  \int_0^{+\infty} (1+x+\Phi(x))f^\veps(t,x)\, dx < +\infty\,, 
\label{eq:phi_1} \\  
  & \sup_{\veps>0} \, \sup_{t\in[0,T]} \, u^\veps(t) < +\infty\,, \vphantom{\int_0^t } \label{bound_u} \\ 
  & \sup_{\veps>0} \, \int_0^T \veps^\eta c_2^\veps(t) \, dt < +\infty\,. \label{bound_c_2_measure}
\end{align}
\end{lem}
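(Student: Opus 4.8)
The plan is to treat the three bounds in increasing order of difficulty, using the balance of mass \eqref{eq:masscons_eps}, the weak formulation \eqref{eq:weak_form_eps} specialized to the two admissible test functions $\vphi\equiv 1$ and (a truncation of) $\Phi$, and the structural inequalities enjoyed by the functions of the class $\Uc$. The crucial point is that the estimates are interdependent, so the order matters.

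First I would dispatch \eqref{bound_u} and the $x$-moment part of \eqref{eq:phi_1} directly from \eqref{eq:masscons_eps}: since $u^\veps$ and $f^\veps$ are non-negative, both $u^\veps(t)$ and $\int_0^\infty x f^\veps(t,x)\,dx$ are bounded by $m^\veps$, and $m^\veps\to m$ forces $\sup_\veps m^\veps<+\infty$. Next, the constant (``$1$'') moment and \eqref{bound_c_2_measure} I would obtain simultaneously by inserting $\vphi\equiv 1$ into \eqref{eq:weak_form_eps}. Then $\Delta_{\pm\veps}1=0$ and $\frac1\veps\int_{\Lambda_2^\veps}1\,dx=1$, so the identity collapses to
\[ \int_0^\infty f^\veps(t,x)\,dx = \int_0^\infty f^{\rm in,\veps}(x)\,dx + \int_0^t\big[\alpha^\veps u^\veps(s)^2 - \beta^\veps\veps^\eta c_2^\veps(s)\big]\,ds. \]
The production term is bounded by $K_{\alpha,\beta}(\sup u^\veps)^2$ through \eqref{bound_a1_b2} and the bound on $u^\veps$ just obtained. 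I still need $\sup_\veps\int f^{\rm in,\veps}<+\infty$: this follows from the vague convergence of $\{f^{\rm in,\veps}\}$ to the finite measure $\mu^{\rm in}$ together with the tightness at infinity supplied by \eqref{H6}, since $\int_{x>R}f^{\rm in,\veps}\le \Phi(R)^{-1}\int\Phi f^{\rm in,\veps}\to 0$ as $R\to\infty$ uniformly. Dropping the non-negative term $\int f^\veps(t)$ on one side and $\beta^\veps\veps^\eta c_2^\veps\ge 0$ on the other then yields both the uniform bound on $\int f^\veps(t)$ over $[0,T]$ and, after rearranging and using that $\beta^\veps$ is bounded below by a positive constant for $\veps$ small (as $\beta^\veps\to\beta>0$), the time-integral bound \eqref{bound_c_2_measure}.

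The main work is the $\Phi$-moment bound, and the hard part will be controlling the discrete difference $\Delta_\veps\Phi$ through the structure of $\Uc$. Since $\Phi\notin W^{1,\infty}$, I would first apply \eqref{eq:weak_form_eps} to the $\Cc^1$ truncation $\Phi_n$ equal to $\Phi$ on $[0,n]$ and extended by its tangent line for $x>n$; this $\Phi_n$ lies in the admissible class, is convex with $\Phi_n'$ concave and $\Phi_n'(0)\ge 0$, and increases monotonically to $\Phi$. In the resulting identity the fragmentation term $-b^\veps f^\veps\Delta_{-\veps}\Phi_n$ is $\le 0$ because $\Phi_n$ is increasing, and the nucleation contribution is $\le \alpha^\veps u^\veps(s)^2\,\Phi(\tfrac52\veps)\le \alpha^\veps u^\veps(s)^2\,\Phi(\tfrac52)$, hence uniformly bounded. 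For the aggregation term I would use three elementary consequences of $\Phi_n\in\Uc$: convexity gives $\Delta_\veps\Phi_n(x)\le\Phi_n'(x+\veps)$; subadditivity of the concave $\Phi_n'$ (valid since $\Phi_n'(0)\ge 0$) gives $\Phi_n'(x+\veps)\le\Phi_n'(x)+\Phi'(1)$ for $\veps\le 1$; and integrating the chord inequality for the concave $\Phi_n'$ gives $x\Phi_n'(x)\le 2\Phi_n(x)$, whence $(1+x)\Phi_n'(x)\le \Phi'(1)+4\Phi_n(x)$. Together with $a^\veps(x)\le K_a(1+x)$ from \eqref{H3} and the bounds on $u^\veps$, $\int f^\veps$ and $\int x f^\veps$ already secured, the aggregation term is controlled by $A+B\int\Phi_n f^\veps$ with $A,B$ independent of $n$ and $\veps$. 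A Grönwall argument in integral form, started from \eqref{H6}, then gives $\int\Phi_n f^\veps(t)\le (C_0+\tilde A T)e^{BT}$ uniformly, where $C_0:=\sup_\veps\int\Phi f^{\rm in,\veps}$, and letting $n\to\infty$ by monotone convergence yields the $\Phi$-part of \eqref{eq:phi_1}.

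The chief delicacy is thus twofold. First, the $\Phi$-estimate genuinely consumes the constant-moment bound as an input (to absorb the $\Phi_n'(x)$ and $\Phi'(1)x$ pieces produced by $\Delta_\veps\Phi_n$ and by $a^\veps\le K_a(1+x)$), so the ordering mass/$u$, then constant moment and $c_2^\veps$, then $\Phi$ is essential and not merely cosmetic. Second, I must verify that the tangent-line truncation preserves concavity of the derivative at the joining point $x=n$ (the left value $\Phi''(n)\ge 0$ dropping to the right value $0$), so that subadditivity of $\Phi_n'$ holds with the single constant $\Phi'(1)=\Phi_n'(1)$ uniformly in $n\ge 1$; this is what makes the Grönwall constants $A,B$ truly independent of the truncation level.
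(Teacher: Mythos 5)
Your proof is correct and follows essentially the same route as the paper's: $u^\veps$ and the $x$-moment come from the mass balance \eqref{eq:masscons_eps}, the constant moment together with \eqref{bound_c_2_measure} come from taking $\vphi\equiv 1$ in \eqref{eq:weak_form_eps} and rearranging the signed terms, and the $\Phi$-moment comes from $0\leq\Delta_{\veps}\Phi\leq\Phi'(x+\veps)$, the inequality $x\Phi'(x)\leq 2\Phi(x)$, the linear bound \eqref{H3}, and Gr\"onwall. Your tangent-line truncation $\Phi_n$ with monotone convergence merely makes explicit the ``classical regularizing argument'' the paper invokes from \cite{Laurencot2002a}, and your subadditivity estimate $\Phi_n'(x+\veps)\leq\Phi_n'(x)+\Phi'(1)$ plays exactly the role of the paper's concavity bound $\Phi'(x+\veps)\leq\Phi'(x)+\veps\Phi''(0)$.
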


\begin{remk}
Similar estimates can be found in \cite{Laurencot2002a} for a different scaling. For sake of completeness we recall the proof below. Note that estimate \eqref{bound_c_2_measure}, although trivial, seems to have not been reported elsewhere, and will be important for the next.
\end{remk}

\medskip

\begin{proof}
 By Assumption \ref{hyp:initial}, the convergence of $\{f^{\rm in,\,\veps}\}$ implies that the sequence lies in 
a $weak-*$ compact set of $\Mc_f([0+\infty))$, and with \eqref{H6} we have
 \begin{equation} 
  \sup_{\veps>0}  \int_{\Rb_+}f^{in,\veps}(x)(1+x+\Phi(x)) dx  < +\infty\,. \label{eq:cond_phi1}
 \end{equation}
 Let us start now with estimate \eqref{bound_u}. By the mass conservation relationship \eqref{eq:masscons_eps}, 
$u^\veps(t) \leq m^\veps$, for any $t\geq0$, and thanks to Assumption \ref{hyp:initial}, $(m^\veps)$ converges as $\veps\to 0$, thus it is 
bounded by a constant $K_m>0$. Then estimate \eqref{bound_u} directly follows. Similarly, we obtain
 \[\sup_{\veps>0} \sup_{t\in[0,T]} \int_0^{+\infty} xf^\veps(t,x)\, dx < +\infty\,.\]
Then, taking $\vphi = \indic{}$ in Eq. \eqref{eq:weak_form_eps}, it immediately yields by re-arranging the non-positive 
term
 \begin{equation*} 
0\leq \int_0^{+\infty} f^\varepsilon(t,x) \, dx + \int_0^t \beta^\veps \veps^\eta c_2^\veps(s) \, ds \leq  
\int_0^{+\infty} f^{in,\veps}(x)\, dx + \int_0^t \alpha^\veps u^\veps(s)^2 \, ds.
\end{equation*}
Using the bounds  \eqref{bound_a1_b2}, \eqref{bound_u} and \eqref{eq:cond_phi1}, we obtain the inequality 
\eqref{bound_c_2_measure} together with the first part of estimate \eqref{eq:phi_1}. 

\noindent Finally, we put $\vphi=\Phi$ in \eqref{eq:weak_form_eps}. Remark that the derivative $\Phi'$ is not uniformly 
bounded, thus we cannot use Proposition \ref{prop:weak_eps} straightforwardly. However, with a classical regularizing 
argument, one can show that the next computations hold true {\it a posteriori}, see for instance \cite[proof of Lemma 
4.2]{Laurencot2002a}. We remark that 
\[ 0\leq \Delta_{\veps}\Phi(x) \leq \Phi'(x+\veps), \quad   -\Delta_{-\veps}\Phi(x) \leq -\Phi'(x) \leq 0. \]
Moreover, $\Phi'(x+\veps) \leq \Phi'(x) + \veps \Phi''(0)$. Thus, dropping the non-positive term, using 
\eqref{H3} and again that $u^\veps(t) \leq K_m$,
\begin{multline} \label{eq:weak_form_phi1}
\int_0^{+\infty} f^\varepsilon(t,x) \Phi(x)\, dx \leq  \int_0^{+\infty} f^{in,\veps}(x) \Phi(x)\, dx + \int_0^t  
\alpha^\veps u^\veps(s)^2 \left(\frac{1}{\veps}\int_{\Lambda_2^\veps} \Phi(x)dx\right) \, ds \\
 + K_m K_a\int_0^t \int_0^{+\infty}  (1+x) f^\veps(s,x) (\Phi'(x) + \veps \Phi_{1,r}''(0)) \, dx\, ds,
\end{multline}
Let $\delta>0$. Note that $x\Phi'(x)\leq2\Phi(x)$ (see \cite[Lemma A.1]{Laurencot2001}), we get
\begin{multline*}
 \int_0^{+\infty}  (1+x)  f^\veps(s,x) \Phi'(x) \, dx  \leq   \int_0^{\delta}  f^\veps(s,x) \Phi'(x) \, dx + \left( 
\frac 1 \delta +1\right) \int_0^{+\infty}   x f^\veps(s,x) \Phi'(x) \, dx  \\
 \leq \big(\sup_{(0,\delta)} \Phi' \big)\int_0^{\infty}  f^\veps(s,x) \, dx + 2\left( \frac 1 \delta +1\right) 
\int_0^{+\infty}    f^\veps(s,x) \Phi(x) \, dx.
\end{multline*}
We introduce this last estimation into Eq.~\eqref{eq:weak_form_phi1} and we conclude using the previous bounds and 
Gr\"onwall lemma. 

\end{proof}

\subsection{Pointwise estimations on the density}

We turn now to the main estimate of this paper. Indeed, to obtain equicontinuity for the density $\{f^\veps\}$ (in a 
measure space), and then identify the boundary condition, we need to control the behaviour of the small-sized clusters, 
particularly because of the term $\veps^{\eta } c_2^{\veps}$ in the weak equation \eqref{eq:weak_form_eps}. Remark that 
we already have a weak bound (in time) given by Eq.~\eqref{bound_c_2_measure}. In the next Proposition 
\ref{prop:bound_laplace} we improve this estimate by a control on exponential moments which depends on $\rho$ (defined 
in Eq. \eqref{def:rho}). Moments are classical tools and 
play a key role in the well-posedness of BD theory. More recently, exponential moments were also used 
\cite{Jabin2003,Canizo2013} to study long time behavior of BD solutions. Here, let us define the 
discrete Laplace transform  
\begin{equation} \label{def_laplace}
 F^\veps(t,z) = \sum_{j\geq 2} \veps^{r_a} c_j^\veps(t) e^{-jz}\,, \ z\in(0,1)\,.
\end{equation}
From the re-scaled system \eqref{sys:BD_rescaled}, the sequence $(d_i^\veps)_{i \geq 2}$ defined by $d_i^\veps 
:=\veps^{r_a} c_i^\veps$, for $i\geq 2$, satisfies, for each $\veps>0$, the following equations 
\begin{equation} \label{eq:BD_system_H}
  \veps^{1-r_a}\frac{d}{dt}d_i^\veps(t) = H_{i-1}^\veps - H_i^\veps\,, \quad i\geq 2\,,
\end{equation}
where the fluxes are
\begin{equation*}
  \ds H_1^\veps =\alpha^\veps u^\veps(t)^2 - \beta^\veps \veps^{\eta-r_a} d_{2}^\veps(t)\,, \text{ and }\ds H_i^\veps = 
 \overline a_i^\veps u^\veps(t) d_i^\veps(t) - \veps^{r_b-r_a} \overline b_{i+1}^\veps 
d_{i+1}^\veps(t)\,, \quad i\geq 2\,,
\end{equation*}
with, for all $i\geq2$,
\begin{equation*}
\overline a_i^\veps = \frac{a_i^\veps}{\veps^{r_a}}\,, \quad \text{and }\quad\overline b_{i+1}^\veps = 
\frac{b_{i+1}^\veps}{\veps^{r_b}}\,.
\end{equation*}
Note that, under hypotheses \eqref{H3}, \eqref{H4} and \eqref{H5}, the kinetic coefficients $\alpha^\veps$, 
$\beta^\veps$ and $\overline a_i^\veps$, $\overline b_i^\veps$, $i\geq 2$, are convergent sequences toward a positive 
value (resp. $\alpha$, $\beta$, $\overline a i^{r_a}$, $\overline{b} i^{r_b}$).

\medskip

\begin{propo}\label{prop:bound_laplace}
Let $T>0$ and $\{\veps_n\}$ a sequence converging to $0$ such that $\{u^{\veps_n}\}$ converges toward $u$ 
uniformly on $[0,T]$, with  $\inf_{t\in [0,T]} u(t) > \rho$. There exists $z_0>0$ such that for all 
$z\in(0,z_0)$
 \begin{equation}\label{bound_ci_caseII}
  \sup_{n\geq0} \, \sup_{t\in[0,T]}F^{\veps_{n}}(t,z)<\infty\,.
 \end{equation}
 In particular, for all $r \geq r_a$ and  $i\geq 2$, we have
 \begin{equation}\label{eq:bound_ci_r}
  \sup_{n\geq 0} \, \sup_{t\in[0,T]} \, \veps^r c_i^{\veps_{n}}(t) < +\infty \,.
 \end{equation}
\end{propo}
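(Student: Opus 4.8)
The plan is to derive a closed differential inequality for the discrete Laplace transform $F^\veps(\cdot,z)$ of \eqref{def_laplace} and to close it by a Gr\"onwall argument, the whole difficulty being to exhibit a cancellation at the divergent order $\veps^{r_a-1}$. First I would differentiate $F^\veps$ in time, use \eqref{eq:BD_system_H}, and perform a discrete integration by parts (Abel summation) of the Becker--D\"oring fluxes against the weights $e^{-jz}$. Rewriting everything in terms of the fluxes of \eqref{sys:BD_rescaled_flux}, this produces three contributions: a nucleation source coming from $J_1^\veps$, of size $\veps^{r_a-1}e^{-2z}\alpha^\veps(u^\veps)^2$; an aggregation sink $-\veps^{r_a-1}(1-e^{-z})u^\veps\sum_{i\geq2}\overline a_i^\veps d_i^\veps e^{-iz}$; and a fragmentation source $+\veps^{r_b-1}(e^z-1)\sum_{j\geq3}\overline b_j^\veps d_j^\veps e^{-jz}$ (the fragmentation part of $J_1^\veps$ has a favourable sign and may simply be discarded). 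All these terms carry the diverging prefactor $\veps^{r_a-1}$, and $\veps^{r_b-1}\le\veps^{r_a-1}$ because $r_a\le r_b$, so no crude bound can work: the estimate must come from the relative signs and sizes of source and sink. To legitimise the manipulation of the infinite series I would first work with truncated sums $\sum_{j=2}^M$, for which the identity is exact up to a top boundary flux of order $e^{-Mz}b_{M+1}^\veps c_{M+1}^\veps$, and then let $M\to\infty$, this boundary term vanishing thanks to the exponential weight and the moment bound \eqref{eq:phi_1}.

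The heart of the matter is a splitting at $\veps j=\delta$ for a small $\delta$. For the small clusters $2\le j\le\delta/\veps$ I would compare the fragmentation source with the aggregation sink coefficient by coefficient: the coefficient of $d_j^\veps e^{-jz}$ is nonpositive as soon as $\veps^{r_b-r_a}e^{z}\,\overline b_j^\veps/\overline a_j^\veps\le\theta u^\veps$ for some fixed $\theta<1$. Using the uniform near-$0$ asymptotics \eqref{H5}, namely $\overline a_j^\veps=\overline a\,j^{r_a}(1+o(1))$ and $\overline b_j^\veps=\overline b\,j^{r_b}(1+o(1))$ with $o(1)\to0$ as $\veps j\to0$, the left-hand side equals $e^{z}\frac{\overline b}{\overline a}(\veps j)^{r_b-r_a}(1+o(1))$, which on the range $\veps j\le\delta$ is at most $e^{z}(\rho+o_\delta(1))$, with $\rho$ as in \eqref{def:rho}. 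Since we are precisely in the regime $\inf_{[0,T]}u>\rho$, I would fix $\theta\in(\rho/\inf_{[0,T]}u,\,1)$ and then choose $z_0$ and $\delta$ small enough that the inequality holds for all $z\in(0,z_0)$ and all small $\veps$; this dominates the fragmentation source \emph{and} leaves a definite fraction $(1-\theta)$ of the aggregation dissipation available. This is exactly where the ingoing character of the characteristic, $u>\rho$, is used, and I expect it to be the delicate step. For the large clusters $j>\delta/\veps$ the weight $e^{-jz}$ is exponentially small in $1/\veps$; splitting $e^{-jz}=e^{-jz/2}e^{-jz/2}$ and bounding $\sum_j c_j^\veps e^{-jz/2}$ through the mass identity \eqref{eq:masscons_eps}--\eqref{bound_u}, I would show that both the large-$j$ fragmentation source and the large-$j$ tail of $F^\veps$ itself are $o(1)$, hence uniformly bounded.

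Collecting these facts I expect to reach a differential inequality of the form $\frac{d}{dt}F^\veps\le A^\veps-\lambda^\veps F^\veps+C$, where $A^\veps=\veps^{r_a-1}e^{-2z}\alpha^\veps(u^\veps)^2$ and $\lambda^\veps=(1-\theta)\veps^{r_a-1}(1-e^{-z})(\inf_{[0,T]}u)\,\underline a$ both diverge like $\veps^{r_a-1}$, with $\underline a>0$ the uniform lower bound of $\overline a_j^\veps$ on $2\le j\le\delta/\veps$ again furnished by \eqref{H5}, and $C$ uniformly bounded. The decisive point is that the divergent prefactors cancel in the ratio: $A^\veps/\lambda^\veps\le 2e^{-2z}\alpha^\veps(u^\veps)^2/((1-\theta)(1-e^{-z})(\inf_{[0,T]}u)\,\underline a)$, which is controlled uniformly via \eqref{bound_a1_b2} and \eqref{bound_u}. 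Gr\"onwall's lemma then gives $\sup_{t\in[0,T]}F^\veps(t,z)\le F^\veps(0,z)+(A^\veps+C)/\lambda^\veps$, the initial value being bounded uniformly by \eqref{H7}. Since $\{u^{\veps_n}\}\to u$ uniformly forces $\inf_{t}u^{\veps_n}(t)>\rho$ for $n$ large, this yields \eqref{bound_ci_caseII} for all large $n$, while the finitely many remaining indices are bounded because each $F^{\veps_n}(\cdot,z)$ is continuous on $[0,T]$. Finally \eqref{eq:bound_ci_r} is immediate: from \eqref{def_laplace} one has $\veps^{r_a}c_i^\veps(t)\le e^{iz}F^\veps(t,z)$, and for $r\ge r_a$, $\veps^{r}c_i^\veps=\veps^{r-r_a}\veps^{r_a}c_i^\veps\le e^{iz}\sup F^\veps$ since $\veps^{r-r_a}\le1$.
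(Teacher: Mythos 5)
Your proof is correct and follows essentially the same route as the paper's: differentiate the discrete Laplace transform, perform the Abel summation, split small versus large clusters so that \eqref{H5} together with $\inf u>\rho$ turns the aggregation sink into a dissipation term dominating the fragmentation source, control the large-cluster tail through the exponential weight, and close with Gr\"onwall, the divergent prefactors $\veps^{r_a-1}$ cancelling in the ratio of source to dissipation rate. The only deviations are cosmetic: you split at $\veps j\leq\delta$ where the paper splits at $j\leq 1/\sqrt{\veps_n}$, and you bound the tail via the mass identity where the paper uses the linear growth \eqref{H3}--\eqref{H4} and the $L^1$ bound \eqref{eq:phi_1}.
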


\medskip

\begin{remk}
It is immediate from estimate \eqref{eq:bound_ci_r} that we can obtain compactness in $w-*-L^\infty(0,T)$ for any 
finite size cluster $\veps^{r}c_i^\veps$, which will be used to prove theorem \ref{thm:LS_slow} 
and \ref{thm:LS_compensated}.
\end{remk}

\medskip

\begin{remk}
We cannot prove that the pseudo-moment $F^\veps$ is propagated along limit solution for which $u(t) \leq \rho$ on some 
time interval. This is important in the case $r_a=r_b$ since $\rho>0$ and $u$ can eventually cross this threshold 
(which is, up to our knowledge, an open problem).
\end{remk}

\medskip

\begin{proof}
Let $z>0$ and $\veps>0$. First, note the discrete Laplace transform define in Eq. \eqref{def_laplace} is finite for each 
$\veps>0$ and for all $t$ in 
$[0,T]$, since 
\[ F^\veps(t,x) \leq \veps^{r_a-1} \int_0^\infty x f^\veps(t,x)\, dx\,.\]
Let us derive $F^\veps$ with respect to time (derivation under the sum is justified by similar bound). For all 
$t\in[0,T]$, we get
\begin{equation*}
 \veps^{1-r_a} \partial_t  F^\veps(t,z) = \sum_{j\geq 2} e^{-jz} \big[ H_{j-1}^\veps - H_j^\veps\big] = e^{-2z} 
H_1^\veps - (1-e^{-z}) \sum_{j\geq 2} e^{-jz} H_j^\veps\,.
\end{equation*}
Thus, developing the fluxes we get
\begin{multline*}
 \veps^{1-r_a} \partial_t  F^\veps(t,z) =  e^{-2z} H_1^\veps - (1-e^{-z}) \sum_{j\geq 2} e^{-jz} \overline a_j^\veps 
u^\veps(t) d_j^\veps(t)\\ 
 + (1-e^{-z}) \sum_{j\geq 2} e^{-jz} \veps^{r_b-r_a} \overline b_{j+1}^\veps d_{j+1}^\veps(t) \,.
\end{multline*}
Then, re-indexing the second sum on the right hand side, we obtain
\begin{multline} \label{dt_Feps_intermediate}
 \veps^{1-r_a} \partial_t  F^\veps(t,z) =  e^{-2z} H_1^\veps - (1-e^{-z}) e^{-2z} \overline a_2^\veps u^\veps(t) 
d_2^\veps(t) \\
 - (1-e^{-z}) \sum_{j\geq 3} e^{-jz} \overline a_j^\veps \left[ u^\veps(t) - \frac{b_{j}^\veps}{a_j^\veps } e^z \right] 
d_j^\veps(t)   \,.
\end{multline}
Since $\inf_{t\in [0,T]} u(t) > \rho$, we can find a constant $c$ such that $\inf_{t\in [0,T]} u(t)\geq c>\rho$. Then, 
by uniform convergence of $\{u^{\veps_n}\}$, there exists $\tilde \veps>0$ small enough, such that  for all $n$ with 
$\veps_n\leq \tilde \veps$, $\inf_{t\in[0,T]}\, u^{\veps_n}(t) \geq c>\rho$. Also, we can choose $\delta>0$ and $z_0>0$, both small 
enough, such that for all $t\in[0,T]$ we have $c > \rho e^{z_0} + 2\delta$. Then, there exists $N>0$ such that, for all $z\in(0,z_0)$
\[ \inf_{n\geq N}\, \inf_{t\in[0,T]}\, u^{\veps_n}(t) > \rho e^{z} + 2\delta \,.\]
Then, by hypothesis \eqref{H5}, for all $3\leq j \leq 1/\sqrt{\veps}$,
\[\frac{b_{j}^\veps}{a_j^\veps } = \frac{\overline b}{\overline a} \frac{(\veps j)^{r_b}+o((\veps j)^{r_b})}{(\veps 
j)^{r_a}+o((\veps j)^{r_a})} = \frac{\overline b}{\overline a} (\veps j)^{r_b-r_a} (1+o(1)) \,, \]
so that, we have, for $N$ large enough,  
\[\sup_{n\geq N}\, \sup_{j\in[3,\ldots,\lfloor1/\sqrt{\veps_n}\rfloor-1]} \left\lvert 
\rho-\frac{b_{j}^{\veps_n}}{a_j^{\veps_n} 
}\right\rvert < \delta  e^{-z}\,. \]
The latter gives a uniform control in $j$ for the relatively ``small'' sizes $j\leq 1/\sqrt{\veps}$. We separate the 
sum in Eq.~ \eqref{dt_Feps_intermediate} in two parts, the small-sized clusters for $j\in 
(3,\ldots,\lfloor1/\sqrt{\veps_n}\rfloor-1)$ in one side, for which (for $n\geq N$)
\[ u^{\veps_n}(t) -  \frac{b_{j}^{\veps_n}}{a_j^{\veps_n}} e^z  = u^{\veps_n}(t) - \rho e^{z} + e^z 
\left(\rho - \frac{b_{j}^{\veps_n}}{a_j^{\veps_n}} \right)  \geq 2\delta - \delta = \delta\,,\] 
and the large-sized clusters in another side. Hence, for all $t\in[0,T]$,
\begin{multline} \label{dt_Feps_intermediate_1}
 \sum_{j\geq 3} e^{-jz}  \overline a_j^{\veps_n} \left[  u^{\veps_n}(t) -  
\frac{b_{j}^{\veps_n}}{a_j^{\veps_n} } e^z \right] 
d_j^{\veps_n}(t) \\
 \geq \delta \sum_{j=3}^{\lfloor1/\sqrt{\veps_n}\rfloor-1} e^{-jz}  \overline a_j^{\veps_n} d_j^{\veps_n}(t)  
+ \sum_{j\geq 
\lfloor 1/\sqrt{\veps_n}\rfloor} e^{-jz} \overline a_j^{\veps_n} \left[  u^{\veps_n}(t) -  
\frac{b_{j}^{\veps_n}}{a_j^{\veps_n} } e^z 
\right] d_j^{\veps_n}(t)\,.
\end{multline}
Using hypothesis \eqref{H5}, there exists $x_0$ such that for all $x\in(0,x_0)$, $a(x)/x^{r_a}>3\overline a/4$. Thus, 
there exists $\tilde N$ such that for all $n\geq \tilde N$ and for all $2\leq i\leq 1/\sqrt{\veps_n}$ we have $\veps_n i 
\leq \sqrt{\veps_n}< x_0$ and $a(\veps i)/(\veps_ni)^{r_a}\geq 3\overline a/4$. Still with hypothesis \eqref{H5}, we can choose $\tilde N$ such that for all $n>\tilde N$,  and for all $2\leq i\leq 1/\sqrt{\veps_{n}}$, we have $a^\veps(\veps i)/(\veps_ni)^{r_a}\geq \overline a/2$. Hence, from the rank $\tilde N$, there exists a 
constant $\widetilde K_a>0$ such that for all $n\geq \tilde N$ and for all $2\leq j \leq 1/\sqrt{\veps_n}$, we have
\[\overline a_j^{\veps_n} = \frac{a_j^{\veps_n}}{\veps_n^{r_a}} \geq  \widetilde K_a := \frac 1 2 \overline 
a\, 2^{r_a}\,.\]
Accordingly, the rest of the proof has to be understood for $n$ large enough. Using the equation on $H_1^\veps$ and plugging inequality \eqref{dt_Feps_intermediate_1} into 
Eq.~\eqref{dt_Feps_intermediate} we obtain 
\begin{multline*} 
 \veps_n^{1-r_a} \partial_t  F^{\veps_n}(t,z) \leq   e^{-2z} [\alpha^{\veps_n} u^{\veps_n}(t)^2 
-\veps_n^{\eta-r_a} \beta^{\veps_n }
d_{2}^{\veps_n}(t)]
 \\ - (1-e^{-z}) e^{-2z} [ \overline a_2^{\veps_n} u^{\veps_n}(t) - \delta \tilde K_a]  d_2^{\veps_n}(t) - 
(1-e^{-z}) \delta 
\widetilde K_a  \sum_{j= 2}^{\lfloor1/\sqrt{\veps_n}\rfloor-1} e^{-jz} d_j^{\veps_n}(t)  \\
  - (1-e^{-z}) \sum_{j\geq \lfloor1/\sqrt{\veps_n}\rfloor} e^{-jz} \overline a_j^{\veps_n} \left[  
u^{\veps_n}(t) -  
\frac{b_{j}^{\veps_n}}{a_j^{\veps_n}} e^z \right] d_j^{\veps_n}(t)\,.\\
\end{multline*}
Remark that $ \overline a_2^{\veps_n} u^{\veps_n}(t) - \delta \widetilde K_a \geq \tilde K_a( \rho e^z + 2 \delta 
-\delta) \geq  \widetilde K_a \rho \geq 0$. Using the moment estimates \eqref{bound_u} and hypothesis \eqref{H3}, we 
have $\sup_{t\in[0,T]} \alpha^\veps u^\veps(t)^2 \leq K_0$ uniformly in $\veps>0$. Thus, dropping also some negative 
terms, we have
\begin{multline*} 
 {\veps_n}^{1-r_a} \partial_t  F^{\veps_n}(t,z)  \leq  K_0 e^{-2z} - (1-e^{-z}) \delta \widetilde K_a \sum_{j= 
2}^{\lfloor1/\sqrt{{\veps_n}}\rfloor-1} e^{-jz} d_j^{\veps_n}(t)   \\
 + (1-e^{-z}) \sum_{j\geq \lfloor1/\sqrt{{\veps_n}}\rfloor} e^{-jz}\frac{b_{j}^{\veps_n}}{{\veps_n}^{r_a}} 
d_j^{\veps_n}(t)  \,.
\end{multline*}
Now using that 
\[\sum_{j= 2}^{ \lfloor1/\sqrt{{\veps_n}}\rfloor-1} e^{-jz} d_j^{\veps_n}(t) =F^{\veps_n}(t,z)-\sum_{j \geq  
\lfloor1/\sqrt{{\veps_n}}\rfloor } e^{-jz} d_j^{\veps_n}(t)\,,\] 
we obtain 
\begin{multline*} 
  {\veps_n}^{1-r_a} \partial_t  F^{\veps_n}(t,z) \leq   K_0 e^{-2z} - (1-e^{-z}) \delta \widetilde K_a 
F^{\veps_n}(t,z) \\
  + (1-e^{-z}) \delta  \sum_{j \geq \lfloor1/\sqrt{{\veps_n}}\rfloor } e^{-jz}  \widetilde K_a d_j^{\veps_n}(t)  
  + (1-e^{-z})e^z \sum_{j\geq \lfloor1/\sqrt{{\veps_n}}\rfloor} e^{-jz}\frac{b_{j}^{\veps_n}}{{\veps_n}^{r_a}} 
d_j^{\veps_n}(t)  \,.
\end{multline*}
At this step, we recall that by definition we have, for all $j\geq 2$, $d_j^\veps/\veps^{r_a} = c_j^\veps$, and 
$\widetilde K_a< a_j^\veps/\veps^{r_a}$, so that, with $K = \max(\delta,e^z)$, 
\begin{multline*} 
 {\veps_n}^{1-r_a} \partial_t  F^{\veps_n}(t,z)  \leq  K_0 e^{-2z} - (1-e^{-z})  \delta \widetilde K_a 
F^{\veps_n}(t,z)  \\
 + (1-e^{-z}) K  \sum_{j\geq \lfloor1/\sqrt{{\veps_n}}\rfloor} e^{-jz}  (a_j^{\veps_n} + b_{j}^{\veps_n}) 
c_j^{\veps_n}(t) \,.
\end{multline*}
Finally, by hypotheses \eqref{H3}-\eqref{H4}, we have, for all $j\geq \lfloor1/\sqrt{\veps}\rfloor$ (and $\veps$ small enough)
\[ e^{-jz}  (a_j^\veps + b_{j}^\veps) \leq (K_a+K_b)(1+\veps j)  e^{-jz} \leq (K_a+K_b) \veps\,.\]
Thus,
\begin{multline*} 
 {\veps_n}^{1-r_a} \partial_t  F^{\veps_n}(t,z) \leq   K_0 e^{-2z} - (1-e^{-z}) \delta \widetilde K_a 
F^{\veps_n}(t,z) \\
+ (1-e^{-z}) K (K_a+K_b)   \int_0^{+\infty} f^{\veps_n}(t,x)dx \,.
\end{multline*}
By the moment estimates \eqref{eq:phi_1}, there exists $\tilde K$ independent from $\veps_n$ such that
\begin{equation}\label{ineq:F}
 {\veps_n}^{1-r_a} \partial_t  F^{\veps_n}(t,z)  \leq  - (1-e^{-z}) \delta \widetilde K_a F^{\veps_n}(t,z) + 
\tilde K \,.
\end{equation}
%
We can conclude that
\begin{equation*}
 F^{\veps_n}(t,z) \leq F^{\veps_n}(0,z) + \frac{ \widetilde K}{\delta \widetilde K_a (1-e^{-z})}\,,
\end{equation*}
and the result \eqref{bound_ci_caseII} follows thanks to the initial bound on $ F^\veps(0,z)$ given by hypothesis 
\eqref{H7}. Note that \eqref{eq:bound_ci_r} directly follows from the previous bound \eqref{bound_ci_caseII} and the 
definition of the discrete Laplace transform \eqref{def_laplace}.
\end{proof}

\medskip

\begin{remk}
Estimate \eqref{ineq:F} on $F^\veps$ can be easily generalized for any exponent $r$ instead of $r_a$. 
Writing 
$G^{^\veps}(t,z)= \sum_{j\geq 2} \veps^{r} c_j^\veps(t) e^{-jz}$, and following the same steps, we find
 \begin{equation*}
   \veps^{1-r_a} \partial_t  G^\veps(t,z)  \leq  - (1-e^{-z}) \delta \widetilde K_a G^\veps(t,z) + \veps^{r-r_a}\tilde 
K \,.
\end{equation*}
Thus, this inequality provides valuable information if $r\geq r_a$.
\end{remk}

\subsection{Equicontinuity lemmas}

We now turn to the equicontinuity of the density approximation, as a measure valued time-dependent function. The new 
result here is to provide equicontinuity in a measure space on $[0,\infty)$ (lemma \ref{lem:equicontinuity}) . The 
first lemma is independent on $\eta$ and  similar to \cite{Laurencot2002a,Collet2002}.
\medskip

\begin{lem}\label{equicontinuity_u}
Let $T>0$. The family $\{u^{\veps}\}$ is equicontinuous on $[0,T]$.
\end{lem}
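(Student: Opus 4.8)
The plan is to prove the stronger statement that $\{u^\veps\}$ is uniformly Lipschitz on $[0,T]$, which immediately yields equicontinuity. Since $u^\veps$ solves the monomer equation in \eqref{sys:BD_rescaled}, I would first expand the fluxes and isolate $J_1^\veps$ to write the time derivative in the explicit form
\[
\frac{d}{dt}u^\veps = -2\veps\alpha^\veps(u^\veps)^2 + 2\veps^{1+\eta}\beta^\veps c_2^\veps - \veps\sum_{i\geq 2} a_i^\veps u^\veps c_i^\veps + \veps\sum_{i\geq 3} b_i^\veps c_i^\veps\,.
\]
Equivalently, to avoid differentiating the infinite sum one may use the balance \eqref{eq:masscons_eps} to set $u^\veps(t)=m^\veps-\int_0^\infty x f^\veps(t,x)\,dx$ and insert the admissible test function $\vphi(x)=x$ into \eqref{eq:weak_form_eps} (legitimate by the remark after Proposition \ref{prop:weak_eps}); since $\Delta_{\veps}\vphi=\Delta_{-\veps}\vphi=1$ and $\frac1\veps\int_{\Lambda_2^\veps}\vphi\,dx=2\veps$, this reproduces the integrated version of the identity above, so the two routes are interchangeable.

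The core of the argument is to bound each term uniformly in $\veps$. The aggregation and fragmentation sums are controlled by a moment of $f^\veps$: writing $\veps\sum_{i\ge 2}a_i^\veps c_i^\veps=\int_0^\infty a^\veps(x)f^\veps\,dx$ and similarly for $b^\veps$, the sublinear bounds \eqref{H3}--\eqref{H4} together with \eqref{bound_u} and the uniform $(1+x)$-moment \eqref{eq:phi_1} give a bound by $(K_mK_a+K_b)\int_0^\infty(1+x)f^\veps\,dx\le C$. The $\alpha$-term carries an explicit factor $\veps$ and is bounded by $\veps K_{\alpha,\beta}K_m^2$. The only delicate term is the denucleation feedback $2\veps^{1+\eta}\beta^\veps c_2^\veps$, which I would treat through the elementary pointwise bound $\veps c_2^\veps\le \veps\sum_{i\ge2}c_i^\veps=\int_0^\infty f^\veps\,dx\le C_1$ coming from \eqref{eq:phi_1}: for $\veps\le1$ and $\eta\ge0$ one has $\veps^\eta\le1$, hence $\veps^{1+\eta}c_2^\veps=\veps^\eta(\veps c_2^\veps)\le C_1$. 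Collecting these estimates gives $\sup_{\veps\le1}\sup_{[0,T]}\big|\frac{d}{dt}u^\veps\big|\le L<\infty$, so $\{u^\veps\}$ is uniformly $L$-Lipschitz and therefore equicontinuous.

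The main obstacle, and the only place where the structure of the problem genuinely enters, is this single mode $c_2^\veps$: unlike a full moment of $f^\veps$ it cannot be absorbed into $\int(1+x)f^\veps\,dx$, and $c_2^\veps$ alone may blow up like $\veps^{-1}$. What saves the estimate is the gain of one power of $\veps$ in front of it (equivalently, the factor $\frac1\veps\int_{\Lambda_2^\veps}x\,dx=2\veps$ in the weak form), which converts a single-mode quantity into the controlled $L^1$-mass $\veps c_2^\veps$. Should $\eta<0$ (possible only when $r_a=0$), the pointwise bound fails and I would instead keep the extra $\veps$ and invoke the time-integrated estimate \eqref{bound_c_2_measure}, obtaining $\int_s^t 2\veps^{1+\eta}\beta^\veps c_2^\veps\,d\tau\le 2\veps K_{\alpha,\beta}\sup_\veps\int_0^T\veps^\eta c_2^\veps\,d\tau=O(\veps)$ and hence $|u^\veps(t)-u^\veps(s)|\le C|t-s|+C'\veps$; equicontinuity of any sequence $\veps_n\to0$ then still follows, by handling the finitely many indices with $C'\veps_n\ge\kappa$ via their individual uniform continuity on the compact $[0,T]$.
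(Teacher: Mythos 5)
Your proof is correct and takes essentially the same route as the paper: the paper's proof also reduces to the weak formulation with $\vphi(x)=x$ (equivalently, the integrated monomer equation via mass conservation \eqref{eq:masscons_eps}) and bounds the increments of $\int_0^\infty x f^\veps\,dx$ by $Kh$ using exactly your three estimates --- the $(1+x)$-moment bound \eqref{eq:phi_1} for the aggregation/fragmentation terms, the bound \eqref{bound_u} on $u^\veps$, and the observation that $\veps^{\eta+1}c_2^\veps=\veps^{\eta}(\veps c_2^\veps)$ is uniformly controlled by the $L^1$-norm of $f^\veps$ since $\eta\geq 0$ and $\veps$ is small. Your uniform-Lipschitz phrasing and the paper's increment estimate are the same computation, and the closing aside about $\eta<0$ is superfluous since the scaling assumes $\eta\geq 0$.
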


\medskip

\begin{proof}
Let us fix $T>0$. From the mass conservation \eqref{eq:masscons_eps}, we can deduce that the equicontinuity of 
$\{u^{\veps}\}$ directly follows from the one of the sequence $\{\int_0^{+\infty}xf^{\veps} (\cdot,x)\,dx\}$. Thus, we 
focus on this latter. We have, from Eq.~\eqref{eq:weak_form_eps}  with $\vphi(x)=x$, for all  $t\in [0,T-h]$ and $s \in 
[0,h]$ with $0 < h < T$,
\begin{multline} \label{eq:equicontinuity_u1}
 \left| \int_0^{+\infty} [f^\varepsilon(t+s,x)-f^\varepsilon(t,x) ] x\, dx \right|\leq \left(\frac 1 \veps 
\int_{\Lambda_2^\veps} x\, dx \right) \int_t^{t+s}  ( \alpha^\veps u^\veps(\sigma)^2  + \beta^\veps\veps^{\eta} 
c_2^\veps(\sigma))  \,d\sigma \\
 + \int_t^{t+s} \int_0^{+\infty} \vert a^\veps(x) u^\veps(\sigma) f^\veps(\sigma,x)   -  b^\veps(x) 
f^\veps(\sigma,x)\vert\, dx\, d\sigma\,.
\end{multline}
The first term in the r.h.s of \eqref{eq:equicontinuity_u1} can be bounded, thanks to the bound \eqref{bound_a1_b2}, by 
\begin{multline*}
\left(\frac 1 \veps \int_{\Lambda_2^\veps} x\, dx \right)\int_t^{t+s} ( \alpha^\veps u^\veps(\sigma)^2  + 
\beta^\veps\veps^{\eta} c_2^\veps(\sigma)) \,d\sigma \\
\leq 2K_{\alpha,\beta} \left[\veps\sup_{t\in[0,T]} u^\veps(t)^2 + \sup_{t\in[0,T]} \veps^{\eta+1} c_2^{\veps}(t) 
\right] h\,.
\end{multline*}
Then, since $\eta\geq 0$ and remarking that $\veps c_2^\veps$ is obviously bounded by the $L^1$ norm of $f^\veps$, we 
can use the moment estimates in Eqs.~\eqref{eq:phi_1} and \eqref{bound_u}, so that for $\veps$ sufficiently small, 
there exists $K$ independent of $t$ and $\veps$ such that
\begin{equation}\label{equicontinuity_u_first} 
\left(\frac 1 \veps \int_{\Lambda_2^\veps} x\, dx \right)\int_t^{t+s}  ( \alpha^\veps u^\veps(\sigma)^2  + \beta^\veps 
\veps^{\eta} c_2^\veps(\sigma))  \,d\sigma \leq K h\,.
\end{equation}
Let us now focus on the second term on the right-hand side of Eq.~\eqref{eq:equicontinuity_u1}. Using hypotheses 
\eqref{H3}-\eqref{H4} and the moment estimates in Eq.~\eqref{eq:phi_1}, we get
\begin{multline*} \int_t^{t+s} \int_0^{+\infty} \vert a^\veps(x) u^\veps(\sigma) f^\veps(\sigma,x)   -  b^\veps(x) 
f^\veps(\sigma,x)\vert\, dx\, d\sigma\\
\leq \left(K_a\sup_{\veps>0}\sup_{t\in[0,T]} u^\veps(t) + K_b \right) \int_t^{t+s}  \int_0^{+\infty} f^\veps(\sigma,x) 
(1+x)\, dxd\sigma\,.
\end{multline*}
Hence, there is a constant $K>0$ such that 
\begin{multline} \label{equicontinuity_u_second} \int_t^{t+s} \int_0^{+\infty} \vert a^\veps(x) u^\veps(\sigma) 
f^\veps(\sigma,x)   -  b^\veps(x) f^\veps(\sigma,x)\vert\, dx\, d\sigma\\
\leq  h K \left( \sup_{\veps>0} \, \sup_{t\in[0,T]} \int_0^{+\infty} (1+x)f^\veps(t,x)\, dx\right)\,.
\end{multline}
Combining both inequalities \eqref{equicontinuity_u_first}-\eqref{equicontinuity_u_second}, it follows that for all 
$\delta>0$, for all  $h\in(0,T)$ ,
\begin{equation*}
\sup_{\veps>0} \sup_{t \in[0,T-h]} \sup_{s \in[0,h]} \left| \int_0^{+\infty} [f^\varepsilon(t+s,x)-f^\varepsilon(t,x) ] 
x\, dx \right|\leq \delta\,,
\end{equation*}
which gives the equicontinuity property for $\{u^\veps\}$.
\end{proof}

\medskip

The next lemma improves the equicontinuity of $\{f^\veps\}$ around $x=0$.

\medskip

\begin{lem}\label{lem:equicontinuity}
Assume $\eta \geq r_a$ and $T>0$. Let $\{\veps_n\}$ a sequence converging to $0$ such that $\{u^{\veps_n}\}$ 
converges toward $u$ uniformly on $[0,T]$ satisfying  $\inf_{t\in [0,T]} u(t) > \rho$. Then the sequence $\{f^{\veps_{n}}\}$ 
is equicontinuous in $\Mc_f([0,+\infty))$.
\end{lem}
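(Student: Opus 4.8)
The plan is to establish equicontinuity in the weak-$*$ sense, that is, to show that for every fixed test function $\vphi$ the real-valued maps $t\mapsto \int_0^\infty \vphi(x) f^{\veps_n}(t,x)\,dx$ share a common modulus of continuity on $[0,T]$, uniform in $n$. I would first take $\vphi\in\Cc_c^1([0,+\infty))$, so that $\vphi$ and $\partial_x\vphi$ are bounded and Proposition \ref{prop:weak_eps} applies, and upgrade to $\vphi\in\Cc_c([0,+\infty))$ at the very end. Writing the weak formulation \eqref{eq:weak_form_eps} at times $t+s$ and $t$ and subtracting, the increment $\int_0^\infty \vphi\,[f^{\veps_n}(t+s)-f^{\veps_n}(t)]\,dx$ splits into a \emph{boundary} contribution, carrying the factor $\frac1\veps\int_{\Lambda_2^\veps}\vphi\,dx$ against $\alpha^\veps (u^\veps)^2-\beta^\veps\veps^\eta c_2^\veps$, and a \emph{transport} contribution built from $a^\veps$, $b^\veps$ and the finite differences $\Delta_{\pm\veps}\vphi$.

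The transport contribution is handled exactly as in the proof of Lemma \ref{equicontinuity_u}. Since $|\Delta_{\pm\veps}\vphi(x)|\leq \|\partial_x\vphi\|_{L^\infty}$, the linear bounds \eqref{H3}--\eqref{H4}, the uniform bound \eqref{bound_u} on $u^\veps$, and the first moment estimate in \eqref{eq:phi_1} together give a control of the form $\int_t^{t+s}\int_0^\infty|\ldots|\,dx\,d\sigma \leq K\,\|\partial_x\vphi\|_{L^\infty}\,\big(\sup_{t}\int_0^\infty(1+x)f^{\veps_n}(t,x)\,dx\big)\,s$, hence $\leq K_\vphi\, h$ uniformly in $n$ and $t$ whenever $s\leq h$. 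This step requires nothing new.

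The heart of the matter is the boundary contribution. Bounding $\big|\frac1\veps\int_{\Lambda_2^\veps}\vphi\,dx\big|\leq \|\vphi\|_{L^\infty}$ reduces it to $\|\vphi\|_{L^\infty}\int_t^{t+s}\big[\alpha^\veps (u^\veps)^2+\beta^\veps\veps^\eta c_2^\veps\big]\,d\sigma$, whose quadratic piece is uniformly bounded by \eqref{bound_a1_b2} and \eqref{bound_u} and so contributes $\leq K_0\|\vphi\|_{L^\infty}\,h$. The delicate piece is $\int_t^{t+s}\veps^\eta c_2^\veps\,d\sigma$. I stress that the device of Lemma \ref{equicontinuity_u} — taking $\vphi(x)=x$, for which $\frac1\veps\int_{\Lambda_2^\veps}x\,dx$ is of order $\veps$ and supplies an extra power of $\veps$ — is unavailable here, since a general test function only sees $\vphi(0)=O(1)$ at the boundary; the merely weak-in-time bound \eqref{bound_c_2_measure} is likewise insufficient to yield a modulus linear in $h$. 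Instead I would invoke Proposition \ref{prop:bound_laplace}, whose hypotheses are precisely the standing assumptions of this lemma (uniform convergence of $\{u^{\veps_n}\}$ to $u$ with $\inf_{[0,T]}u>\rho$). Taking $r=\eta$ in \eqref{eq:bound_ci_r}, which is legitimate because the lemma assumes $\eta\geq r_a$, yields the \emph{pointwise} bound $\sup_n\sup_{t\in[0,T]}\veps_n^\eta c_2^{\veps_n}(t)=:C<\infty$, whence $\int_t^{t+s}\veps_n^\eta c_2^{\veps_n}\,d\sigma\leq C\,h$. The boundary contribution is therefore also $\leq K_\vphi\,h$, uniformly in $n$ and $t$.

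Combining the two estimates gives a uniform modulus $\big|\int_0^\infty\vphi\,[f^{\veps_n}(t+s)-f^{\veps_n}(t)]\,dx\big|\leq K_\vphi\,h$ for all $\vphi\in\Cc_c^1([0,+\infty))$. To reach arbitrary $\vphi\in\Cc_c([0,+\infty))$, I would approximate uniformly by $\vphi_\delta\in\Cc_c^1([0,+\infty))$ and absorb the error $2\|\vphi-\vphi_\delta\|_{L^\infty}\sup_t\int_0^\infty f^{\veps_n}\,dx$ using the uniform total-mass bound coming from \eqref{eq:phi_1}. This establishes equicontinuity of $\{f^{\veps_n}\}$ in $\Mc_f([0,+\infty))$. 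The single genuine obstacle is the boundary contribution, and the whole point is that it is resolved by the new pointwise estimate of Proposition \ref{prop:bound_laplace} together with the hypothesis $\eta\geq r_a$; this is exactly where the argument departs from \cite{Laurencot2002a,Collet2002}.
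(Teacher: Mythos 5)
Your proof is correct and follows essentially the same route as the paper: the same splitting of the increment from the weak formulation into a boundary and a transport contribution, the same treatment of the transport term as in Lemma \ref{equicontinuity_u}, and the same key step of invoking Proposition \ref{prop:bound_laplace} with $r=\eta\geq r_a$ to obtain the pointwise-in-time bound on $\veps^\eta c_2^\veps$, which is exactly where the hypotheses of the lemma enter. The only cosmetic difference is the final packaging: you pass from $\Cc^1_c([0,+\infty))$ to $\Cc_c([0,+\infty))$ test functions by uniform approximation using the total-mass bound, whereas the paper encodes the very same estimate $K(\|\psi\|_\infty+\|\psi'\|_\infty)h$ into a metric built from a countable dense family of smooth test functions; the two conclusions are equivalent.
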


\medskip

\begin{proof} Let us fix $T>0$. Let $h \geq 0 \in(0,T)$, $t\in[0,T-h]$ and
$s\in[0,h]$ we have, for all $\psi\in\Cc^\infty_c([0,+\infty))$ and $\veps>0$
\begin{multline} \label{eq:equicontinuity_1}
\left| \int_0^{+\infty} [f^\varepsilon(t+s,x)-f^\varepsilon(t,x)] \psi(x)\, dx \right|  \\
\leq   \int_t^{t+s}  (\alpha^\veps u^\veps(\sigma)^2  + \beta^\veps \veps^{\eta} c_2^\veps(\sigma)) 
\left(\frac{1}{\veps}\int_{\Lambda_2^\veps}  \left\vert  \psi(x) \right\vert \, dx\right) \,d\sigma \\
 + \int_t^{t+s} \int_0^{+\infty} \vert a^\veps(x) u^\veps(\sigma) f^\veps(\sigma,x) \Delta_{\veps}\psi(x)  -  
b^\veps(x) f^\veps(\sigma,x) \Delta_{-\veps}\psi(x) \vert\, dx\, d\sigma\,.
\end{multline}
The first integral in the right-hand side can be bounded as follows
\begin{multline*}
\int_t^{t+s}  ( \alpha^\veps u^\veps(\sigma)^2  + \beta^\veps \veps^{\eta}c_2^\veps(\sigma))   \left(\frac{1}{\veps} 
\int_{\Lambda_2^\veps} \left\vert  \psi(x) \right\vert\, dx\right) \,d\sigma  \\
\leq h  \| \psi \|_{\infty} \sup_{t\in[0,T]} \left[ \alpha^\veps u^\veps(t)^2 +   \beta^\veps \veps^{\eta} 
c_2^{\veps}(t) \right]\,.
\end{multline*}
Using Eqs.~\eqref{bound_a1_b2}, \eqref{bound_u} and by Proposition \ref{prop:bound_laplace}, Eq.~\eqref{eq:bound_ci_r}, 
both terms in the supremum are uniformly bounded in time and along $\{\veps_{n}\}$. Hence, there exists $K$ independent of $T$ and 
$\veps$ such that, for all $t\leq T-h$, $s\in[0,h]$,
\begin{align}\label{bound_first} 
\int_t^{t+s}  ( \alpha^{\veps_n} u^{\veps_n}(\sigma)^2  + \beta^{\veps_n}  {\veps_n}^{\eta} c_2^{\veps_n}(\sigma)) 
\left(\frac{1}{{\veps_n}} 
\int_{\Lambda_2^{\veps_n}}  \left\vert  \psi(x) \right\vert \, dx\right)\,d\sigma \leq K \| \psi|_{\infty} h\,.
\end{align}
We now focus on the second integral in the right hand side of \eqref{eq:equicontinuity_1}. Using upper bounds 
\eqref{linear_bound} and \eqref{bound_u}, we can find a constant $K$ such that for all $\veps >0$
\begin{multline*}
\int_t^{t+s} \int_0^{+\infty} \vert a^\veps(x) u^\veps(\sigma) f^\veps(\sigma,x) \Delta_{\veps}\psi(x)  -  
b^\veps(x) f^\veps(\sigma,x) \Delta_{-\veps}\psi(x) \vert\, dx\, d\sigma\\
\leq  K \| \psi'\|_{\infty} \int_t^{t+s}  \int_0^{+\infty} f^\veps(\sigma,x) (1+x)\, dxd\sigma\,.
\end{multline*}
Combining this last inequality with the moment estimate \eqref{eq:phi_1} and the inequality \eqref{bound_first},  there 
exists a constant $K$ (not depending on $\psi$, $h$ and $\veps$), such that for all $h \in(0,T)$, $t\in[0,T-h]$, 
 $s\in[0,h]$, $\psi\in\Cc_c^\infty([0,+\infty))$ and $n\geq0$
\begin{equation*} \label{eq:equi_h}
\left| \int_0^{+\infty} [f^{\varepsilon_n}(t+s,x)-f^{\varepsilon_n}(t,x) 
]\psi(x) \, 
dx \right|\leq K (\| \psi \|_{\infty} +\| \psi'\|_{\infty} )  h\,.
\end{equation*}
Let $\{\vphi_i\}_{i\geq 1} \subset\Cc_c^\infty([0,+\infty))$ a dense subset of $\Cc_c([0,+\infty))$ for the uniform 
norm. The metric $d$ defined by, for all $\mu$ and $\nu$ belonging to $\Mc_f([0,+\infty))$, 
\[d(\mu,\nu) = \sum_{i} \frac{2^{-i}}{\|\vphi_i\|_\infty+\|\vphi_i'\|_\infty} \left|\int_0^\infty \vphi_i \mu - 
\int_0^\infty \vphi_i 
\nu\right|\,,\]
is equivalent to the $weak-*$ topology (on bounded subset), see for instance similar construction in 
\cite[Theorem III.25]{Brezis}. Thus, for all $h \geq 0 \in(0,T)$, we have
\begin{equation*}
\sup_{t\in[0,T-h]}\, \sup_{s\in[0,h]}\, \sup_{n\geq0} \ d(f^{\veps_n}(t+s),f^{\veps_n}(t)) \leq  K h \,.
\end{equation*}
 This concludes the proof.
%
%
%
\end{proof}

\subsection{Compactness and limit} 

Here we give some technical lemmas which prepare the proof of the main results. 

\medskip

\begin{lem} \label{lem:weak_form_tx_eps}
For all $T>0$ and all $\vphi\in \Cc^1_c([0,T)\times[0,+\infty))$, we have, for all $\veps>0$,
\begin{multline}\label{eq:weak_form_tx_eps}
\int_0^T \int_0^{+\infty} \left[ \partial_t \vphi(t,x) + a^\veps(x) u^\veps(s)  \Delta_{\veps}\vphi(t,x)  -   
b^\veps(x) \Delta_{-\veps}\vphi(t,x) \right] f^\veps(t,x)\, dx\, dt \\
+ \int_0^{+\infty} f^{in,\veps}(x) \vphi(0,x)\, dx +  
\int_0^T [ 
 \alpha^\veps u^\veps(t)^2  -  \beta^\veps \veps^{\eta} c_2^\veps(t)] \left(\frac{1}{\veps} \int_{\Lambda_2^\veps}  
 \vphi(t,x)\, dx\right) dt = 0
\end{multline}
where $\Delta_h \vphi(t,x) = (\vphi(t,x+h) - \vphi(t,x))/h$, for $h\in\Rb$, and
\begin{equation} \label{eq:massss}
 u^\veps(t) + \int_0^\infty x f^\veps(t,x) \, dx = m^\veps.
\end{equation}
\end{lem}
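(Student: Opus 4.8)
The plan is to upgrade the spatial weak formulation of Proposition~\ref{prop:weak_eps} to a space-time formulation by differentiating in time and integrating by parts, exploiting that for a compactly supported $\vphi$ the density $f^\veps(t,\cdot)$ tests against only finitely many $\Lambda_i^\veps$.

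First I would fix $\vphi\in\Cc^1_c([0,T)\times[0,+\infty))$ and write the pairing explicitly through the discrete representation~\eqref{eq:def_feps}. Setting $\psi_i^\veps(t):=\int_{\Lambda_i^\veps}\vphi(t,x)\,dx$ gives
\[
\int_0^{+\infty} f^\veps(t,x)\vphi(t,x)\,dx=\sum_{i\geq2}c_i^\veps(t)\,\psi_i^\veps(t).
\]
Because $\vphi$ has compact support in $x$, there is $R>0$ with $\psi_i^\veps\equiv0$ once $(i-1/2)\veps>R$, so for each fixed $\veps$ the sum is finite, uniformly in $t$. Each $c_i^\veps$ is $\Cc^1$ in time (the right-hand side of~\eqref{sys:BD_rescaled} is continuous under Assumption~\ref{assumption_1}) and each $\psi_i^\veps$ is $\Cc^1$ in time since $\vphi\in\Cc^1$; hence $t\mapsto\int_0^{+\infty}f^\veps(t,x)\vphi(t,x)\,dx$ is $\Cc^1$ and the product rule may be applied term by term.

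Next I would differentiate. The finite sum yields
\[
\frac{d}{dt}\sum_{i\geq2}c_i^\veps\,\psi_i^\veps=\sum_{i\geq2}\frac{dc_i^\veps}{dt}\,\psi_i^\veps+\sum_{i\geq2}c_i^\veps\,\frac{d\psi_i^\veps}{dt}.
\]
The second sum is exactly $\int_0^{+\infty}f^\veps(t,x)\,\partial_t\vphi(t,x)\,dx$. For the first sum I would insert the equations~\eqref{sys:BD_rescaled}, $\frac{d}{dt}c_i^\veps=\frac{1}{\veps}[J_{i-1}^\veps-J_i^\veps]$, and perform a discrete Abel summation; the finite support kills any boundary contribution at infinity and leaves the first flux $J_1^\veps$ paired with $\psi_2^\veps$. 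This is precisely the algebraic identity that underlies Proposition~\ref{prop:weak_eps}, and reading that computation at frozen time $t$ identifies the first sum with
\[
[\alpha^\veps u^\veps(t)^2-\beta^\veps\veps^\eta c_2^\veps(t)]\Big(\frac{1}{\veps}\int_{\Lambda_2^\veps}\vphi(t,x)\,dx\Big)+\int_0^{+\infty}\big[a^\veps(x)u^\veps(t)f^\veps(t,x)\Delta_\veps\vphi(t,x)-b^\veps(x)f^\veps(t,x)\Delta_{-\veps}\vphi(t,x)\big]\,dx.
\]

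Finally I would integrate this identity over $[0,T]$. The left-hand side telescopes to $\int_0^{+\infty}f^\veps(T,x)\vphi(T,x)\,dx-\int_0^{+\infty}f^\veps(0,x)\vphi(0,x)\,dx$; since $\vphi(T,\cdot)\equiv0$ by the compact support in $[0,T)$ and $f^\veps(0,\cdot)=f^{in,\veps}$, this equals $-\int_0^{+\infty}f^{in,\veps}(x)\vphi(0,x)\,dx$. Rearranging reproduces~\eqref{eq:weak_form_tx_eps}, while~\eqref{eq:massss} is just~\eqref{eq:masscons_eps}. The only genuine care needed is the justification of the term-by-term differentiation and the vanishing of the Abel-summation boundary term at infinity; both follow from the uniform-in-$t$ finiteness of the support of $\vphi$ together with the $\Cc^1$ regularity of the $c_i^\veps$, so I expect this step---rather than any estimate---to be the main (though mild) obstacle.
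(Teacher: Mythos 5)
Your proposal is correct and is essentially the paper's own argument: the paper's proof consists precisely of multiplying each Becker-D\"oring equation by $\vphi_i^\veps(t)=\int_{\Lambda_i^\veps}\vphi(t,x)\,dx$, using the definition \eqref{eq:def_feps} of $f^\veps$, and repeating the summation-by-parts computation of Proposition \ref{prop:weak_eps}, now with time-dependent test functions and an integration in time that produces the $\partial_t\vphi$ term and the initial datum (the $t=T$ boundary term vanishing by the compact support in $[0,T)$). Your write-up simply makes explicit the details (finiteness of the sum, term-by-term differentiation, Abel summation) that the paper leaves as a sketch.
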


\medskip

\begin{proof}
 The proof remains on multiplying each equation of the Becker-D\"oring system \ref{sys:BD_rescaled} by $\vphi_i = 
\int_{\Lambda_i^\veps} \vphi(t,x)\, dx$ for $\vphi\in \Cc^1_c([0,T)\times[0,+\infty))$ and using the definition of 
$f^\veps$ in Eq.~\eqref{eq:def_feps}. It is similar to Proposition \ref{prop:weak_eps}.
\end{proof}

\medskip

\begin{lem}\label{lem:compactness_weak_star}
Let $T>0$. The family $\{f^{\veps}\}$ is relatively $weak-*$ compact in  $L^\infty(0,T;\Mc_f([0,+\infty))$. If $\mu$ is 
an accumulation point of $\{f^{\veps}\}$, then there exists a sequence $\{\veps_n\}$ converging to $0$ and a 
non-negative function
$u\in C([0,T])$ such that $u^{\veps_n}$ converges to $u$ uniformly on $[0,T]$, with $u(0)=u^{\rm in}$ and   
\[  u(t) + \int_0^\infty x \mu_t(dx) = m.\]
Moreover, for all  $\vphi\in \Cc^1_c([0,T)\times[0,+\infty))$
\begin{multline*}
 \int_0^T \int_0^{+\infty} \left[ \partial_t \vphi(t,x) + a^{\veps_n}(x) u^{\veps_n}(s)  \Delta_{{\veps_n}}\vphi(t,x)  
- b^{\veps_n}(x) \Delta_{-{\veps_n}}\vphi(t,x) \right] f^{\veps_n}(t,x)\, dx\, dt \\
\to \int_0^T \int_0^{+\infty} \left[ \partial_t \vphi(t,x) + (a(x) u(s) -   
b(x)) \partial_x\vphi(t,x) \right] \mu_t(dx)\, dx
\end{multline*}
\begin{equation*}
\int_0^T  \alpha^{\veps_n} u^{\veps_n}(t)^2  \left(\frac{1}{{\veps_n}} \int_{\Lambda_2^{\veps_n}}  
 \vphi(t,x)\, dx\right) dt \to  \int_0^T  \alpha u(t)^2 \varphi(t,0) dt \,,
\end{equation*}
and 
\begin{equation*}
 \int_0^{+\infty} \vphi(0,x) f^{in,{\veps_n}}(x) \, dx \to \int_0^{+\infty} \vphi(0,x)\, \mu^{\rm in}(dx)\,
\end{equation*}
as $n\to +\infty$.
\end{lem}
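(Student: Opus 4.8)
The plan is to prove Lemma~\ref{lem:compactness_weak_star} by combining the a~priori bounds of Lemma~\ref{lem:u1xphi1} with the equicontinuity results of Lemmas~\ref{equicontinuity_u} and~\ref{lem:equicontinuity}, and then passing to the limit in the weak formulation of Lemma~\ref{lem:weak_form_tx_eps}. First I would establish relative $weak$-$*$ compactness of $\{f^\veps\}$ in $L^\infty(0,T;\Mc_f([0,+\infty)))$. By estimate~\eqref{eq:phi_1} the family is uniformly bounded in total mass (indeed in the $\Phi$-moment) uniformly in time and in $\veps$; since $L^\infty(0,T;\Mc_f([0,+\infty)))$ is the dual of the separable space $L^1(0,T;\Cc_c([0,+\infty)))$, bounded sets are relatively $weak$-$*$ sequentially compact by the Banach--Alaoglu theorem. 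Hence along a sequence $\{\veps_n\}$ we may extract a limit $\mu\in L^\infty(0,T;\Mc_f([0,+\infty)))$, which is the accumulation point in the statement.

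Next I would identify the limit $u$ and recover the mass balance. By Lemma~\ref{equicontinuity_u} the family $\{u^\veps\}$ is equicontinuous on $[0,T]$, and by~\eqref{bound_u} it is uniformly bounded; by Arzel\`a--Ascoli I can extract (a further sub-sequence of $\{\veps_n\}$, relabelled) so that $u^{\veps_n}\to u$ uniformly with $u\in\Cc([0,T])$ and $u(0)=u^{\rm in}$. To obtain $u(t)+\int_0^\infty x\,\mu_t(dx)=m$, I use the mass conservation~\eqref{eq:massss}, namely $u^{\veps_n}(t)+\int_0^\infty x f^{\veps_n}(t,x)\,dx=m^{\veps_n}$, together with the convergence $m^{\veps_n}\to m$ from Assumption~\ref{hyp:initial}. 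The delicate point is that the moment $\int_0^\infty x f^{\veps_n}(t,x)\,dx$ tests against the \emph{unbounded} weight $x$, which is not in $\Cc_c([0,+\infty))$; this is where the super-linear $\Phi$-bound~\eqref{eq:phi_1} is essential, since it provides the uniform integrability needed to upgrade $weak$-$*$ convergence against compactly supported test functions to convergence of the first moment (as in \cite[Proof of Theorem~2.3]{Collet2002}).

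For the convergence of the three integral terms in the weak formulation, the third (initial-data) term is handled exactly by Assumption~\ref{hyp:initial} on $\{f^{\rm in,\,\veps}\}$ plus the $\Phi$-moment, giving convergence against $\vphi(0,\cdot)$. For the second term I would pass to the limit in $\alpha^{\veps_n}u^{\veps_n}(t)^2\bigl(\tfrac{1}{\veps_n}\int_{\Lambda_2^{\veps_n}}\vphi(t,x)\,dx\bigr)$: the uniform convergence of $u^{\veps_n}$ to $u$, the convergence $\alpha^{\veps_n}\to\alpha$ from~\eqref{H1}, the uniform bound on $u^{\veps_n}$, and the facts that $\Lambda_2^{\veps_n}=[\tfrac32\veps_n,\tfrac52\veps_n)$ collapses to $\{0\}$ and $\tfrac{1}{\veps_n}\int_{\Lambda_2^{\veps_n}}\vphi(t,x)\,dx\to\vphi(t,0)$ by continuity, together with dominated convergence in $t$, yield the limit $\int_0^T\alpha u(t)^2\vphi(t,0)\,dt$. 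The first (transport) term is the main obstacle: I must pass to the limit in a product where $f^{\veps_n}$ converges only $weak$-$*$ while the coefficients $a^{\veps_n}$, $b^{\veps_n}$ and the discrete difference quotients $\Delta_{\pm\veps_n}\vphi$ all vary with $n$.

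I expect the transport term to require the most care. The strategy is to replace the $\veps_n$-dependent pieces by their limits in a controlled way: $\Delta_{\pm\veps_n}\vphi(t,x)\to\partial_x\vphi(t,x)$ uniformly on the compact support of $\vphi$ because $\vphi\in\Cc^1_c$, and $a^{\veps_n}\to a$, $b^{\veps_n}\to b$ uniformly on compacta by~\eqref{H3}--\eqref{H4}, while $u^{\veps_n}\to u$ uniformly in time; on the compact support of $\vphi$ all these convergences are uniform and the coefficient $\bigl(a(x)u(t)-b(x)\bigr)\partial_x\vphi(t,x)$ lies in $\Cc_c([0,T]\times[0,+\infty))$, so one genuinely tests $\mu$ against an admissible function. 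I would split the difference into (i) the error from replacing the coefficients and difference quotients by their limits, controlled by the uniform convergences above times the uniform mass bound $\sup_{n}\sup_t\int(1+x)f^{\veps_n}(t,x)\,dx<+\infty$ from~\eqref{eq:phi_1}, and (ii) the remaining term $\int_0^T\!\!\int_0^\infty \bigl(a(x)u(t)-b(x)\bigr)\partial_x\vphi(t,x)\,f^{\veps_n}(t,x)\,dx\,dt$, which converges to the corresponding integral against $\mu_t(dx)$ by definition of $weak$-$*$ convergence once compact support is used to truncate the linear growth. Assembling these three limits and noting that each $f^{\veps_n}$ satisfies~\eqref{eq:weak_form_tx_eps} identically in $n$ yields the stated convergence and completes the proof.
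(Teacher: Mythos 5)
Your proposal follows essentially the same route as the paper: uniform boundedness from \eqref{eq:phi_1} plus Banach--Alaoglu (with separability of the predual) for the $weak$-$*$ compactness; Arzel\`a--Ascoli via Lemma~\ref{equicontinuity_u} and \eqref{bound_u} for the extraction of $u$; and, for the three convergences, an add-and-subtract argument that is exactly the paper's observation that $\int_0^T\!\!\int_0^\infty \psi^{\veps_n} f^{\veps_n}\,dx\,dt \to \int_0^T\!\!\int_0^\infty \psi\, \mu_t(dx)\,dt$ whenever $\psi^{\veps_n}\to\psi$ uniformly with uniformly compact supports. Those parts are correct.

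There is, however, a genuine gap in your derivation of the mass balance. The $weak$-$*$ convergence of $f^{\veps_n}$ in $L^\infty(0,T;\Mc_f([0,+\infty)))$ is convergence against test functions in $L^1(0,T;\Cc_c([0,+\infty)))$, i.e.\ \emph{time-averaged} information only; it says nothing about the behaviour of $f^{\veps_n}(t,\cdot)$ at a fixed time $t$. Your uniform-integrability argument via the $\Phi$-bound correctly handles the unbounded weight $x$, but what it upgrades is still only the time-averaged convergence: it yields
\[
\int_0^T \theta(t)\int_0^\infty x f^{\veps_n}(t,x)\,dx\,dt \;\to\; \int_0^T \theta(t)\int_0^\infty x\,\mu_t(dx)\,dt \quad \text{for all } \theta\in L^1(0,T)\,,
\]
hence $u(t)+\int_0^\infty x\,\mu_t(dx)=m$ only for \emph{a.e.} $t$, not for every $t$ as the lemma is meant to provide (the definition of $N$-solution, and the use of this lemma in the proof of Theorem~\ref{thm:LS_fast}, where no other compactness is available, require the identity at every time, with $u$ continuous). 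The missing idea, which is the extra step in the paper's proof, is to first upgrade to pointwise-in-time convergence: repeat the proof of Lemma~\ref{lem:equicontinuity} with test functions in $\Cc_c((0,+\infty))$, supported away from $x=0$, so that the problematic nucleation term $\frac1\veps\int_{\Lambda_2^\veps}|\psi|\,dx$ vanishes for small $\veps$ and neither Proposition~\ref{prop:bound_laplace} nor the hypotheses $\eta\geq r_a$, $\inf u>\rho$ are needed. This gives equicontinuity in $\Mc_f((0,+\infty))$, hence by Arzel\`a--Ascoli convergence in $\Cc([0,T];w$-$*$-$\Mc_f((0,+\infty)))$, i.e.\ at every fixed $t$. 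Since the weight $x$ vanishes at $x=0$, this convergence on the open half-line combined with the super-linear moment \eqref{eq:phi_1} (after regularizing the identity function) yields $\int_0^\infty x f^{\veps_n}(t,x)\,dx\to\int_0^\infty x\,\mu_t(dx)$ for \emph{every} $t\in[0,T]$, and then \eqref{eq:massss} passes to the limit at every time. Without this step your argument proves a strictly weaker statement.
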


\medskip

\begin{proof}
 First, remark the bound against $1$ in \eqref{eq:phi_1} yields to the relative compactness in 
$L^\infty(0,T;\Mc_f([0,+\infty))$. Let $\mu$ an accumulation point. By Lemma \ref{equicontinuity_u} and bound 
\eqref{bound_u} with Arzel\'a-Ascoli Theorem, entails there exists a sequence $\{\veps_n\}$ and $u\in\Cc([0,T])$ such 
that $u^{\veps_n}$ converge to $u$ uniformly on $[0,T]$ and $\{f^{\veps_n}\}$ to $\mu$. It remains to note that for 
any $\psi^\veps \in \Cc_c([0,T)\times[0,+\infty))$ which converge uniformly to some $\psi$, we have
\[ \int_0^T\int_0^\infty \psi^{\veps_n}(t,x) f^{\veps_n}(t,x)\, dx dt \to \int_0^T\int_0^\infty \psi(t,x) \mu_t(dx) 
dt\,,\]
as $n\to \infty$, to obtain the desired limit, see also \cite{Laurencot2002a,Collet2002}. In fact, using similar arguments as 
in Lemma \ref{lem:equicontinuity} with function in $\Cc_c((0,+\infty))$, we can obtain equicontinuity in
$\Mc_f((0,+\infty))$ for the $weak-*$ topology (open in $x=0$). Such result has been obtained for instance in 
\cite{Collet2002}. Thus, we could improve the compactness of $f^{\veps_n}$ in $\Cc([0,T];\Mc_f((0,+\infty))$ by Arzel\'a-Ascoli 
Theorem. Finally we obtain Eq. \eqref{eq:massss}, using the bound \eqref{eq:phi_1} with $\Phi$, and after  
regularization of the identity function, we have for 
all $t\in[0,T]$
\[\int_0^\infty x  f^{\veps_n}(t,x)\, dx \to \int_0^\infty x  \mu_t( dx)\,. \]
See \cite[Proof of Theorem 2.3]{Collet2002} for details.
\end{proof}

\medskip

\begin{lem} \label{lem:compactness_weak_star_2}
Assume $\eta\geq r_a$ and let a sequence $\{\veps_n\}$ converging to $0$. There exists $T>0$ and 
a sub-sequence $\{\veps_{n'}\}$ of $\{\veps_n\}$ such that  $\{f^{\veps_{n'}}\}$ is relatively compact in 
$\Cc([0,T];w-*-\Mc_f([0,+\infty))$ and $u^{\veps_{n'}}$ converge to some $u$ uniformly on $[0,T]$ with 
$\inf_{t\in[0,T]} u(t)>\rho$.
\end{lem}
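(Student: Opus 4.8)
The plan is to break the apparent circularity in the hypotheses of Lemma~\ref{lem:equicontinuity} and Proposition~\ref{prop:bound_laplace} — both of which presuppose a limiting free-particle concentration $u$ satisfying $\inf_{[0,T]} u>\rho$ — by extracting the limit of $\{u^\veps\}$ \emph{first}, and only afterwards localising the time interval. Fix any $T_0>0$. By Lemma~\ref{equicontinuity_u} the family $\{u^{\veps_n}\}$ is equicontinuous on $[0,T_0]$, and by estimate~\eqref{bound_u} it is uniformly bounded; the Arzel\`a--Ascoli theorem then yields a sub-sequence $\{\veps_{n'}\}$ and a function $u\in\Cc([0,T_0])$ with $u^{\veps_{n'}}\to u$ uniformly on $[0,T_0]$. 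Since $u^{\veps_{n'}}(0)=u^{\rm in,\,\veps_{n'}}\to u^{\rm in}$ by Assumption~\ref{hyp:initial}, we obtain $u(0)=u^{\rm in}>\rho$, and continuity of $u$ furnishes some $T\in(0,T_0]$ with $\inf_{t\in[0,T]} u(t)>\rho$. This $T$ and this sub-sequence are the objects announced in the statement.

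On this interval the standing assumptions of the fine estimates are now available: $\eta\geq r_a$ holds by hypothesis, while $u^{\veps_{n'}}\to u$ uniformly on $[0,T]$ with $\inf_{[0,T]} u>\rho$. I would therefore invoke Lemma~\ref{lem:equicontinuity}, whose own proof already absorbs the pointwise control on $\veps^{\eta}c_2^\veps$ coming from Proposition~\ref{prop:bound_laplace} (estimate~\eqref{eq:bound_ci_r}), to conclude that $\{f^{\veps_{n'}}\}$ is equicontinuous in time on $[0,T]$ for the metric $d$ equivalent to the $weak$-$*$ topology on bounded subsets of $\Mc_f([0,+\infty))$. For the pointwise-in-time compactness, estimate~\eqref{eq:phi_1} of Lemma~\ref{lem:u1xphi1} bounds $\sup_{n'}\sup_{t\in[0,T]}\int_0^\infty (1+x+\Phi(x))f^{\veps_{n'}}(t,x)\,dx$: the bound against $1$ confines every $f^{\veps_{n'}}(t,\cdot)$ to a fixed closed ball of $\Mc_f([0,+\infty))$, which is $weak$-$*$ compact and metrisable by $d$ (separability of $\Cc_c([0,+\infty))$ and Banach--Alaoglu), while the superlinear moment $\Phi$ provides tightness and so prevents any escape of mass to infinity. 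Thus for each fixed $t$ the family $\{f^{\veps_{n'}}(t)\}$ lies in one and the same compact metric space $(K,d)$.

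Having both time-equicontinuity and pointwise relative compactness valued in the compact metric space $(K,d)$, I would finish by the vector-valued Arzel\`a--Ascoli theorem, which gives that $\{f^{\veps_{n'}}\}$ is relatively compact in $\Cc([0,T];w-*-\Mc_f([0,+\infty)))$ — precisely the claimed conclusion. The delicate point is exactly the order of operations just described: the regularity estimate that yields equicontinuity of the density is only legitimate \emph{once} the limiting characteristic is known to be strictly entrant, so the sub-sequence and the final time $T$ must be fixed \emph{before} Lemma~\ref{lem:equicontinuity} and Proposition~\ref{prop:bound_laplace} can be applied. A secondary technical care is to state Arzel\`a--Ascoli for maps into the non-normed target $\Mc_f([0,+\infty))$ through the metric $d$ restricted to the uniformly bounded set $K$, rather than through the full $weak$-$*$ topology, which is not metrisable globally.
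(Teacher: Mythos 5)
Your proposal is correct and follows essentially the same route as the paper's own proof: extract the uniform limit $u$ of $\{u^{\veps_n}\}$ first via Lemma \ref{equicontinuity_u}, the bound \eqref{bound_u} and Arzel\`a--Ascoli, use $u(0)=u^{\rm in}>\rho$ from Assumption \ref{hyp:initial} to localise a time $T$ with $\inf_{[0,T]}u>\rho$, then apply Lemma \ref{lem:equicontinuity} for equicontinuity of $\{f^{\veps_{n'}}\}$ and the bound \eqref{eq:phi_1} for pointwise $weak$-$*$ compactness, and conclude by Arzel\`a--Ascoli for measure-valued maps. Your additional remarks on metrisability of the bounded ball and on tightness are harmless refinements of the same argument (the bound against $1$ alone already suffices for the vague compactness the paper uses).
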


\begin{proof}
Let $\widetilde T>0$ and $\{\veps_n\}$ a sequence converging to $0$.  Thanks to 
Lemma \ref{equicontinuity_u} and the bound \eqref{bound_u} we apply Arzel\'a-Ascoli Theorem, and there exists 
$u\in\Cc([0,\widetilde T])$ and a sub-sequence still denoted by  $\{\veps_n\}$ such that $u^\veps$ converge uniformly 
to $u$ on $[0,\widetilde T]$. By Assumption \ref{hyp:initial} we have $u(0)>\rho$, thus there exists $T\in 
(0,\widetilde T]$ such that we have $\inf_{t\in[0,T]} u(t)>\rho$. We can apply Lemma 
\ref{lem:equicontinuity} so that  $\{f^{\veps_n}\}$ is 
equicontinuous in $\Mc_f([0,+\infty))$. By the bound \eqref{eq:phi_1} (against $1$), we have for each $t\in[0,T]$ that 
$\{ f^{\veps_n}(t) \, : \, \veps>0 \}$ belongs to a $weak-*$ compact set of $\Mc_f([0,+\infty))$. Thus, again by 
Arzel\'a-Ascoli Theorem, the sequence $\{f^{\veps_n}\}$ is relatively compact in $\Cc([0,T];w-*-\Mc_f([0,+\infty))$.
\end{proof}

\medskip

\begin{remk}
 Convergence in $\Cc([0,T];w-*-\Mc_f([0,+\infty))$ entails convergence in $L^\infty(0,T;\Mc_f([0,+\infty))$ for the 
$weak-*$ topology.
\end{remk}

\section{Identification of the boundary term}\label{sec:identif}

This section is devoted to the proof of Theorems \ref{thm:LS_slow} to \ref{thm:LS_fast}. In view of Lemmas 
\ref{lem:weak_form_tx_eps} to \ref{lem:compactness_weak_star_2} it remains to identify the limit of $\veps^\eta 
c_2^\veps$ so that we can pass to the limit in the term
\[\int_0^T \beta^\veps \veps^{\eta} c_2^\veps(t) \left(\frac{1}{\veps} \int_{\Lambda_2^\veps}  
 \vphi(t,x)\, dx\right) dt \]
arising in \eqref{eq:weak_form_tx_eps}.

\medskip

We separate the following in 3 subsections corresponding to the 3 theorems. Thanks to Proposition 
\ref{prop:bound_laplace}, the compactness of the term $\veps^\eta c_2^\veps$ has been already obtained in 
$w-*-L^\infty(0,T)$ for the first two case, that are $\eta > r_a$ and $\eta = r_a$, and in $\Mc_f([0,T])$ by Eq. 
\eqref{bound_c_2_measure} for $\eta<r_a$. The identification of the limit relies on arguments similar to the 
Fenichel-Tikhonov theory on singularly perturbed dynamical systems \cite{kuehn_multiple_2015}. Multiplying the 
re-scaled BD equations \eqref{sys:BD_rescaled} by $\veps$, at least formally, we have for all $t>0$ and $i\geq 2$,
\[ \lim_{\veps \to 0} \veps \frac d {dt} c_i^\veps = \lim_{\veps \to 0} (J_{i-1}^\veps(t) - J_i^\veps(t) ) = 0\,.\]
Hence, at each time $t>0$, the underlying BD model for the discrete sizes $i\geq 2$ has to reach instantaneously the 
equilibrium of the BD model with a constant monomer concentration $u=u(t)$. Such version of the BD model has been well 
studied in \cite{Penrose1989,WATTIS}.

\subsection{Proof of Theorem \ref{thm:LS_slow} -- The slow de-nucleation case}
Let $\{\veps_n\}$ a sequence converging to $0$. By Lemma \ref{lem:compactness_weak_star_2}, there 
exists $T>0$, a sub-sequence, still denoted by  $\{\veps_n\}$ for simplicity, $\mu\in\Cc([0,T];w-*-\Mc_f([0,+\infty)))$ and $u\in\Cc([0,T])$ with
$\inf_{t\in[0,T]} u(t) >\rho$ such that $\{f^{\veps_{n}}\}$ converges to $\mu$ in $\Cc([0,T];\Mc_f([0,+\infty)))$ and $u^{\veps_n}$ converges to $u$ uniformly on $[0,T]$. Now, applying Proposition \ref{prop:bound_laplace}, we get
\[ \sup_{t\in[0,T]} \veps_{n}^\eta c_2^{\veps_{n}} (t) = \veps_{n}^{\eta-r_a} \sup_{t\in[0,T]} \veps^{r_a} c_2^{\veps_{n}} (t) \to 0 \,,\]
as $\eta>r_a$. Thus, combining this result with Lemma \ref{lem:compactness_weak_star} we can pass to the limit in 
\eqref{eq:weak_form_tx_eps} to obtain Eq.~\eqref{eq:weak_LS_fast2} with $N(u)=\alpha u^2$, and Theorem \ref{thm:LS_slow} is proved.

\subsection{Proof of Theorem \ref{thm:LS_compensated} -- The compensated nucleation case}
Let $\{\veps_n\}$ a sequence converging to $0$. We proceed similarly as above with Lemma~\ref{lem:compactness_weak_star_2} and Proposition~\ref{prop:bound_laplace}. As for all $i\geq 2$, $d_i^{\veps_{n}}=\veps_n^{r_a}c_i^{\veps_n}$ satisfies $d_i^{\veps_{n}} e^{-iz}\leq F^{\veps_{n}}(t,z)$, thanks to the estimate \eqref{bound_ci_caseII}, there exists $z>0$ such that
\[ \sup_{n\geq0} \, \sup_{t\in[0,T]} \, \sup_{i\geq 2} \, d_i^{\veps_{n}}e^{-iz} < +\infty\,.\]
Hence, by a Cantor diagonal process, we can extract another sub-sequence, still denoted by $\{\veps_n\}$, such that 
for all $i\geq 2$,
\[ d_i^{\veps_{n}}  \rightharpoonup d_i \,, \quad w-*-L^\infty(0,T)\,,\]
and 
\begin{equation}\label{eq:bound_di_ra}
0\leq \sup_{t\in[0,T]}\, \sup_{i\geq2} \, d_i(t) e^{-iz}< K_z \,.
\end{equation}
We recall, from the rescaled  BD system \eqref{sys:BD_rescaled}, that the sequence $(d_i^{\veps_{n}})_{i \geq 2}$ 
satisfies for each $n\geq 0$ Eq. \eqref{eq:BD_system_H}.
%
Hence, for all $\vphi\in \Cc^1([0,T])$,

\begin{multline}\label{eq:fast_case1}
{\veps_{n}}^{1-r_a}  d_{i}^{\veps_{n}}(t)\vphi(t) - {\veps_{n}}^{1-r_a}  d_{i}^{in,{\veps_{n}}} \vphi(0) - 
{\veps_{n}}^{1-r_a}\int_0^t  
d_i^{\veps_{n}}(s)\vphi'(s)\, ds \\
 = \int_0^t \vphi(s)  \left[ H_{i-1}^{\veps_{n}}(s) -  H_i^{\veps_{n}}(s) \right] \, ds\,.
\end{multline}
As $r_a<1$, passing to the limit ${\veps_{n}}\to 0$, the left hand-side in Eq.~\eqref{eq:fast_case1} vanishes, and, 
with Assumption \ref{hyp:coef_BD} on the kinetic rates, we have, for all $\vphi\in \Cc^1([0,T])$,
\[ \int_0^T \vphi(t)  \left[ H_{i-1}(t) -  H_i(t) \right] \, ds = 0 \,,\]
where $H_1 = \alpha u(t)^2 - \beta d_2 $, and for each $i\geq 2$,
\[ H_i = \begin{cases}
            \bar a i^\eta u d_i \,,& \text{if } \eta=r_a < r_b \,, \\[0.8em]
            \bar a i^\eta u d_i - \bar b (i+1)^\eta d_{i+1}\,, & \text{if } \eta=r_a=r_b \,. 
         \end{cases}
\]
Thus, for all $i\geq 2$, we have {\it a.e.} $t\in(0,T)$ that $H_i(t) = H_1(t)$. In the sequel, we will distinguish two 
cases, $r_a<r_b$ and $r_a=r_b$.

\subsubsection{The case $\eta=r_a<r_b$} In this case,  $H_1 = H_2$ for {\it a.e. } $t\in(0,T)$ yields
\[ d_2(t) = \frac{\alpha u^2(t)}{\overline a 2^\eta u(t)+\beta} \,.\]
Hence, the limit $d_2$ is uniquely identified (and by recurrence, all $d_i$, $i\geq 2$, using $H_i=H_1$) as a function 
of the limit $u$. Thus, combining this result with Lemma \ref{lem:compactness_weak_star} we can pass 
to the limit in \eqref{eq:weak_form_tx_eps} to obtain Eq.~\eqref{eq:weak_LS_fast2} with $N(u)=\alpha u^2 \frac{  u}{ u+\beta/(\bar a 2^{\eta})}$, and the case $r_a<r_b$ in Theorem \ref{thm:LS_compensated} is proved.

\subsubsection{The case $\eta=r_a=r_b$} In this case, the limit $(d_i)_{i\geq 2}$ must satisfy $H_i\equiv H$, $i\geq 
1$, for a given constant $H$. We classically (in the study of the equilibrium states of BD equations \cite{Ball1986}) 
define $ Q_1=1$ and for all $i\geq 
2$, 
\begin{equation*}
 Q_i=\frac{\alpha}{\beta}\prod_{k=2}^{i-1}\frac{\overline{a}k^{r_a}}{\overline{b}(k+1)^{r_a}},\ i\geq 2\,.
\end{equation*}
The solutions that satisfy $H_i\equiv H$ for all $i\geq 1$, are given by, after some algebraic manipulation (see 
\cite[lemma 1]{Penrose1989}),
\begin{equation*}
 d_i=Q_i u^i\Big{(}1-H\frac{1}{\alpha u^2}-H\sum_{k=2}^{i-1}\frac{1}{\overline{a}k^{r_a}Q_ku^{k+1}}\Big{)}\,,\quad 
i\geq2\,.
\end{equation*}
%
Thus, for all $i\geq 2$,
\begin{equation*}
 d_i=\frac{\alpha 
u^2}{\beta}\frac{2^{r_a}}{i^{r_a}}\Big{(}\frac{\overline{a}u}{\overline{b}}\Big{)}^{i-2}\left[1-\frac{H}{\alpha 
u^2}\left(1+\frac{\beta}{2^{r_a}}\frac{1}{\overline{a}u-\overline{b}}\right)+\frac{H \beta}{\alpha u^2 
2^{r_a}}\frac{\left(\overline{b}/(\overline{a}u)\right)^{i-2}}{\overline{a}u-\overline{b}}\right]\,.
\end{equation*}
%
%
However, for $u(t)>\rho = \overline b / \overline a$, there exists a unique $H$ such that the bound 
\eqref{eq:bound_di_ra} is satisfied, given by
\begin{equation*}
 H= \frac{\alpha u^2}{\left(1+\frac{\beta}{2^\eta}\frac{1}{\overline{a}u-\overline{b}}\right)}= \frac{\alpha u^2 
(\overline{a}u-\overline{b})}{\overline{a}u+\frac{\beta}{2^\eta}-\overline{b}}\,.
\end{equation*}
For this value, we have {\it a.e.} $t\in[0,T]$
\begin{equation*}
 d_2(t)=\frac{\alpha u(t)^2}{2^\eta(\overline{a}u-\overline{b})+\beta}=\frac{\alpha 
u(t)^2}{\beta}\left[1-\frac{\overline{a}u-\overline{b}}{\overline{a}u-\overline{b}+\beta/2^\eta}\right]\,.
\end{equation*}
Hence, proceeding as before we recover the second part of Theorem \ref{thm:LS_compensated}.

\subsection{Proof of Theorem \ref{thm:LS_fast} -- The fast de-nucleation} In the case $\eta<r_a$ we have no $L^\infty$ 
bound over $\veps^\eta c_2^\veps$, and no equicontinuity property on $\{f^\veps\}$ in $\Mc_f([0,+\infty))$. 
Nevertheless, we can apply  Lemma \ref{lem:compactness_weak_star}. Thus, let  $\widetilde T>0$ and $\{\veps_n\}$ a 
sequence converging to $0$, there exists a sub-sequence of $\{\veps_n\}$ (not relabeled),  $\mu \in 
L^\infty([0, \widetilde T];\Mc_f([0,+\infty)))$ and $u\in \Cc([0,\widetilde T])$ such that $f^{\veps_n} 
\rightharpoonup\mu$ in $w-*-L^\infty([0,\widetilde T];\Mc_f([0,+\infty)))$ and $u^{\veps_n}$ converges uniformly to $u$ 
on $[0,\widetilde T]$. Since $u^{\rm in}>\rho$ by Assumption \ref{hyp:initial}, there exists $T\in(0,\widetilde T]$ 
such that $\inf_{t\in[0,T]} u(t)>\rho$. Moreover, by the bound \eqref{bound_c_2_measure} we can extract another 
sub-sequence of $\{\veps_n\}$ (not relabeled) such that $d_2^{\veps_{n}}:=\veps_{n}^\eta c_2^\veps$ converges to a non-negative 
finite measure $\Gamma_2$ on $[0,T]$, where the 
convergence holds in $\Mc_f([0,T])$ endowed with the $weak-*$ topology. Also, for all $\vphi \in \Cc^1([0,T])$, the 
equation \eqref{sys:BD_rescaled} for $i=2$ yields 
\begin{multline}\label{limit_mesure_gamma2_temp}
  {\veps_{n}}^{1-r_a}  {\veps_{n}}^{r_a} c_{2}^{\veps_{n}}(T)\vphi(T) - {\veps_{n}}^{1-r_a}  {\veps_{n}}^{r_a}  
c_{2}^{in,\veps_n} \vphi(0) -  
{\veps_{n}}^{1-r_a} \int_0^T \vphi'(t) {\veps_{n}}^{r_a} c_{2}^{\veps_{n}}(t) dt \\
= \int_0^T \vphi(t) [ \alpha^{\veps_{n}} u^{\veps_{n}}(t)^2 - \beta^{\veps_{n}} d_2^{\veps_{n}}(t) ]dt \\ - 
\int_0^T \vphi(t) [\overline 
a_2^{\veps_{n}} {\veps_{n}}^{r_a-\eta} 
u^{\veps_{n}}(t) d_2^{\veps_{n}}(t) - \overline b_3^{\veps_{n}} {\veps_{n}}^{r_b} c_3^{\veps_{n}}(t)] dt\,.
\end{multline}
By Proposition \ref{prop:bound_laplace}, ${\veps_{n}}^{r_a} c_2^{\veps_{n}}(t)$ is uniformly bounded with respect to 
both time $t\in[0,T]$ and $n$, so that the left hand side of Eq.~\eqref{limit_mesure_gamma2_temp} goes to $0$ as 
${\veps_{n}}\to0$. Hence, with the bound \eqref{bound_c_2_measure} and since $\eta < r_a$, we have 
\begin{equation} \label{limit_mesure_gamma2}
 \lim_{{\veps_{n}}\to 0} \int_0^T \vphi(t)  {\veps_{n}}^{r_b} c_3^{\veps_{n}}(t) dt  = \frac{1}{\overline b_3} 
\left(  \int_0^T \vphi(t) \beta 
\Gamma_2(dt) - \int_0^T \vphi(t) \alpha u(t)^2 dt\right)\,.
\end{equation}
Here again two cases have to be considered, $r_a<r_b$ and $r_a=r_b$.
\subsubsection{The case $r_a<r_b$} In this case we use again Proposition \ref{prop:bound_laplace} for the left 
hand-side of Eq.~\eqref{limit_mesure_gamma2} and use that $\veps^{r_b-r_a} \to 0$ as ${\veps_{n}}\to0$. Thus, we are 
led with the following equality in measure
\[ \Gamma_2(dt) = \frac \alpha \beta  u(t)^2 dt\,. \]
Thus, combining this result with Lemma \ref{lem:compactness_weak_star} we can pass 
to the limit in \eqref{eq:weak_form_tx_eps} and we obtain the first case of  Theorem \ref{thm:LS_fast}.

\subsubsection{The case $r_a=r_b$}
In this case, we use again the fact that by Proposition~\ref{prop:bound_laplace}, up to a sub-sequence of $\{\veps_n\}$ (not relabeled), for all 
$i\geq 2$, there exists $d_i \in L^\infty(0,T)$ and $z_0>0$ such that
\[ {\veps_{n}}^{r_b} c_i^{\veps_{n}} \rightharpoonup d_i \, \quad w-*-L^\infty(0,T)\,,\]
and for all $z<z_0$, there exists $K_z>0$ such that
\begin{equation}\label{eq:bound_di_ra_fastdenucl}
 0 \leq \sup_{t\in[0,T]}\, \sup_{i\geq 2} \, d_i(t) e^{-iz}< K_z \,.
\end{equation}
From Eq.~\eqref{limit_mesure_gamma2}, we obtain the equality in measure 
\[ \overline b_3 d_3 \, dt  =    \beta \Gamma_2(dt) -  \alpha u(t)^2 \, dt \,.\]
Then, iterating the procedure, from equation \eqref{sys:BD_rescaled}, we get that, for all $i\geq 3$ and  $\vphi \in 
\Cc^1([0,T])$
\begin{multline*}
  {\veps_{n}}^{1-r_a}  {\veps_{n}}^{r_a} c_{i}^{\veps_{n}}(T)\vphi(T) - {\veps_{n}}^{1-r_a}  {\veps_{n}}^{r_a}  
c_{i}^{in,{\veps_{n}}} \vphi(0) -  
{\veps_{n}}^{1-r_a} \int_0^T \vphi'(t){\veps_{n}}^{r_a} c_i^{\veps_{n}}(t) dt \\
= \int_0^T \vphi(t) [ \overline a_{i-1}^{\veps_{n}}  u^{\veps_{n}}(t){\veps_{n}}^{r_a}c_{i-1}^{\veps_{n}}(t) - 
\overline b_i^{\veps_{n}} {\veps_{n}}^{r_a} 
c_i^{\veps_{n}}(t)] \, dt \\
- \int_0^T \vphi(t) [\overline a_i^{\veps_{n}} u^{\veps_{n}}(t) {\veps_{n}}^{r_a} c_i^{\veps_{n}}(t) - 
\overline b_{i+1}^{\veps_{n}} {\veps_{n}}^{r_a} 
c_{i+1}^{\veps_{n}}(t)] dt\,.
\end{multline*}
Hence, for $i=3$, writing $ {\veps_{n}}^{r_a}c_{2}^{\veps_{n}}(t)= \veps_n^{r_a-\eta} d_{2}^{\veps_{n}}(t)\to 0$ (in 
$\Mc_f([0,T])$), we 
obtain
\begin{equation*}
 0= \int_0^T \vphi(t)[ - \overline b_3 d_3(t) - \overline a_3 u(t) d_3(t) + \overline b_{4}d_{4}(t)] dt\,.
\end{equation*}
And for all $i\geq 4$,
\begin{equation*}
 0= \int_0^T \vphi(t)[\overline a_{i-1} u(t) d_{i-1}(t) - \overline b_i d_i(t) - \overline a_i u(t) d_i(t) + \overline 
b_{i+1}d_{i+1}(t)] dt\,.
\end{equation*}
With $H_2=-\overline b_3 d_3$, $H_i=\overline a_i u^\veps d_i(t) - \overline b_{i+1}d_{i+1}$, $i\geq 3$, then we must 
have a.e. $H_i=H_2=:H$, for all $i\geq 2$. Then we get, for all $i\geq 3$,
\begin{equation*}
 d_i(t)=-\frac{H}{\overline{b}_i}\sum_{j=3}^i\left( \prod_{k=j}^{i-1} 
\frac{\overline{a}_k}{\overline{b}_{k}}\right)u^{(i-j)}=-\frac{H}{\overline{b}_i}\sum_{j=3}^i
 \left(\frac{\overline{a}u}{\overline{b}}\right)^{i-j}\,.
\end{equation*}
In order to fulfil the bound \eqref{eq:bound_di_ra_fastdenucl},
we must get $H=0$, so that $d_3=0$ and the following equality in measure holds
\[   \Gamma_2(dt) = \frac \alpha \beta  u(t)^2 dt \,.\]
This ends the proof of Theorem \ref{thm:LS_fast}.

\section{Extension to a density}\label{sec:density}

In this section, we make an extra-assumption in order to obtain a convergence result in $L^1$ functional space, so that the limit measure has a density with respect to the Lebesgue measure:

\begin{hyp}\label{assumption_5}
There is $\delta\in(0,1/r_a-1)$ such that, for the function $\Psi(y)=y^{1+\delta}$, 
\begin{equation}\label{hyp:psi_init} \tag{H8}
\sup_{\veps>0}\int_0^{\infty}\Psi(f^{in,\veps}(x))dx<\infty\,.
\end{equation}
Moreover, the kinetic rates are given by exact power law functions, \textit{i.e.},
 \begin{equation}\label{hyp:powerlow} \tag{H9}
  \begin{array}{ll}
   \ds a_i^\veps =  \overline{a}(\veps i)^{r_a}\,,& i\geq 2\,, \\[0.8em]
   \ds b_i^\veps = \overline{b}(\veps i)^{r_b}\,, & i\geq 3\,. 
  \end{array}
 \end{equation}
\end{hyp}
\begin{remk}
The first hypothesis \eqref{hyp:psi_init} is slightly stronger than a compactness hypothesis in $L^1(dx)$, where a more 
general (and not explicit) $\Psi$ can be obtained, see \cite{Chauhoan1977}. However, having an explicit power low 
function for $\Psi$ will simplify the following calculus. The same is valid for the extra 
hypothesis \eqref{hyp:powerlow} on the kinetic rates (which is in agreement with hypothesis \eqref{H5}). 
\end{remk}

\medskip

\noindent Assuming Assumption \ref{assumption_1}-\ref{assumption_5} hold true, we can now prove the last result.

\medskip

\begin{thm}\label{thm_density}
Assume $\eta \geq r_a$ and $r_a=r_b$. Let a sequence $\{\veps_n\}$ converging to $0$. There 
exists $T>0$,  a sub-sequence 
$\{\veps_{n'}\}$ of $\{\veps_n\}$, and $f\in \Cc([0,T],w-L^1(\Rb_+,x^{r_a\delta}dx))\cap 
L^\infty(0,T;L^1(\Rb_+,(1+x)dx))$ 
such that the measure $f(t,x)dx$ is a $N$-solution of LS with mass $m$ and
\[f^{\veps_{n'}} \xrightharpoonup[n'\to+\infty]{} f\]
in $\Cc([0,T];w-L^1(\Rb_+,x^{r_a\delta}dx))$. $N$ is given in Theorem 
\ref{thm:LS_slow}-\ref{thm:LS_compensated} according to the value of $\eta$.
\end{thm}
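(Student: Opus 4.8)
The plan is to upgrade the measure-valued convergence already furnished by Theorems \ref{thm:LS_slow}--\ref{thm:LS_compensated} to a convergence in weak $L^1$, by showing that the strong integrability \eqref{hyp:psi_init} of the initial data propagates in time, uniformly in $\veps$, when measured against the degenerate weight $x^{r_a\delta}$. Concretely, I would monitor the quantity
\[ Y_\veps(t) := \int_0^\infty \big(f^\veps(t,x)\big)^{1+\delta}\, x^{r_a\delta}\, dx = \sum_{i\geq 2}\big(c_i^\veps(t)\big)^{1+\delta}\int_{\Lambda_i^\veps} x^{r_a\delta}\,dx\,, \]
and prove $\sup_{\veps>0}\sup_{t\in[0,T]} Y_\veps(t) < +\infty$. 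The pairing of the exponent $1+\delta$ with the weight $x^{r_a\delta}$ is dictated by the transport structure: for a smooth solution of \eqref{eq:LS} with velocity $v(x)=(\overline a u-\overline b)x^{r_a}$ (recall $r_a=r_b$), one has $\partial_t f^{1+\delta}+\partial_x(v f^{1+\delta})=-\delta(\partial_x v)f^{1+\delta}$, and testing against $x^{r_a\delta}$ the two interior contributions $\int v f^{1+\delta}(x^{r_a\delta})'\,dx$ and $-\delta\int(\partial_x v)f^{1+\delta}x^{r_a\delta}\,dx$ are both equal to $\overline a r_a\delta\,(u-\overline b/\overline a)\int x^{r_a(1+\delta)-1}f^{1+\delta}\,dx$ up to a sign and cancel, leaving only boundary terms. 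It is exactly this cancellation that I would reproduce at the discrete level.

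First I would differentiate $Y_\veps$, insert $\tfrac{d}{dt}c_i^\veps=\tfrac1\veps(J_{i-1}^\veps-J_i^\veps)$ with the exact power law \eqref{hyp:powerlow}, and perform a discrete Abel summation. The outcome splits as a \emph{bulk} part, handled by combining the convexity of $y\mapsto y^{1+\delta}$ with the discrete analogue of the interior cancellation above, using $\inf_t u(t)>\rho$ to keep the velocity positive, so that it contributes at most $C\,Y_\veps(t)$; and a \emph{boundary} part from the smallest clusters, essentially the nucleation flux $\alpha^\veps(u^\veps)^2-\beta^\veps\veps^\eta c_2^\veps$ multiplied by $\tfrac1\veps\int_{\Lambda_2^\veps}x^{r_a\delta}\,dx$. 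The boundary part is controlled by Proposition \ref{prop:bound_laplace}, which bounds $\veps^{r_a}c_2^\veps$ (hence $\veps^\eta c_2^\veps$ since $\eta\geq r_a$) uniformly in time and along the sequence, while the standing assumption $\delta<1/r_a-1$ ensures $r_a(1+\delta)<1$, so that $x^{r_a(1+\delta)-1}$ is integrable at the origin and every near-zero series arising in the Abel summation converges. A Grönwall argument, started from the uniformly bounded $Y_\veps(0)$ supplied by \eqref{hyp:psi_init}, then gives the sought uniform bound.

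From $\sup_\veps\sup_t Y_\veps(t)<\infty$ I would deduce weak $L^1$ compactness. Since $\int_{\{f^\veps>R\}}f^\veps x^{r_a\delta}\,dx\leq R^{-\delta}Y_\veps(t)$, the family is uniformly integrable with respect to $x^{r_a\delta}\,dx$; and because $r_a\delta<1-r_a<1$ we have $x^{r_a\delta}\leq 1+x$, so the moment bound \eqref{eq:phi_1} with the superlinear $\Phi$ yields tightness at infinity. The Dunford--Pettis theorem then gives relative weak compactness in $L^1(\Rb_+,x^{r_a\delta}dx)$ at each time. Time-equicontinuity for the weak $L^1$ topology is obtained as in Lemma \ref{lem:equicontinuity}, now testing the weak formulation \eqref{eq:weak_form_eps} against arbitrary $\psi\in L^\infty$ instead of $\psi\in\Cc_c$, the increments being controlled through the uniform integrability just established. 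An Arzel\'a--Ascoli argument in $\Cc([0,T];w\text{-}L^1(\Rb_+,x^{r_a\delta}dx))$ produces the limit $f$ along a further subsequence.

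Finally I would identify $f$ with the measure-valued $N$-solution $\mu$ already built in Theorems \ref{thm:LS_slow}--\ref{thm:LS_compensated}: for $\vphi\in\Cc_c((0,+\infty))$ the function $\vphi\, x^{-r_a\delta}$ is bounded on $\mathrm{supp}\,\vphi$, so weak $L^1(x^{r_a\delta}dx)$ convergence forces $\int f^\veps\vphi\,dx\to\int f\vphi\,dx$, i.e.\ $weak$-$*$ convergence in $\Mc_f((0,+\infty))$; by Lemma \ref{lem:compactness_weak_star} and uniqueness of the limit, $\mu_t=f(t,\cdot)\,dx$ on $(0,+\infty)$, and the weighted integrability rules out an atom at the origin, whence equality on $[0,+\infty)$. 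Thus $f\,dx$ inherits the weak formulation \eqref{eq:weak_LS_fast2}, the mass balance, and the nucleation rate $N$ already identified according to the value of $\eta$. I expect the genuine difficulty to lie in the bulk estimate of the second step: making the formal interior cancellation rigorous for the piecewise-constant density requires a careful discrete integration by parts and a quantitative use of the convexity of $y\mapsto y^{1+\delta}$, so as to show that, after cancellation, the residual discrete terms are truly of order $Y_\veps(t)$ and do not degenerate near $x=0$, which is precisely where the constraint $\delta<1/r_a-1$ is consumed.
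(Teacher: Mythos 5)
Your plan is, in substance, the paper's own proof: you propagate a weighted $L^{1+\delta}$ bound on $f^\veps$ by exploiting the discrete analogue of the transport cancellation between $\int v\,f^{1+\delta}(x^{r_a\delta})'dx$ and $-\delta\int(\partial_x v)f^{1+\delta}x^{r_a\delta}dx$ (this is exactly Lemma \ref{lem:psi_bound}, proved via the discrete integration by parts of Lemma \ref{lem1}, the convexity of $\Psi$, the sign condition $u>\rho$, and the non-positivity Lemma \ref{lem:estimequitue}); the small-cluster boundary terms are controlled by Proposition \ref{prop:bound_laplace} together with $\delta<1/r_a-1$; then Dunford--Pettis (uniform integrability from the $\Psi$-bound, tightness from the $\Phi$-moment \eqref{eq:phi_1}), weak-topology time equicontinuity, a weak Arzel\`a--Ascoli argument, and identification with the measure-valued $N$-solution. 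All of this matches the paper.

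There is, however, one genuine gap: you take the weight $x^{r_a\delta}$ on \emph{all} of $\Rb_+$ and start your Gr\"onwall argument from ``the uniformly bounded $Y_\veps(0)$ supplied by \eqref{hyp:psi_init}''. But \eqref{hyp:psi_init} bounds only the \emph{unweighted} integral $\int\Psi(f^{in,\veps})dx$, and $x^{r_a\delta}$ is unbounded at infinity, so $Y_\veps(0)$ is not controlled by it, nor by any of the other hypotheses. Concretely, add to a fixed admissible initial datum a single occupied cell at position $M_\veps=i_\veps\veps$ with height $c_{i_\veps}^{in,\veps}=\veps^{-1/(1+\delta)}$, where $M_\veps$ is chosen so that $\Phi(M_\veps)=\veps^{-\delta/(1+\delta)}$: then $\int\Psi(f^{in,\veps})dx$, $\int\Phi(x)f^{in,\veps}dx$, the mass, and \eqref{H7} all remain bounded (the mass contribution is $M_\veps\veps^{\delta/(1+\delta)}=\Phi^{-1}(y)/y\big|_{y=\veps^{-\delta/(1+\delta)}}\to0$ by superlinearity of $\Phi$), while $\int\Psi(f^{in,\veps})x^{r_a\delta}dx\geq M_\veps^{r_a\delta}\to+\infty$. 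This is precisely why the paper works with the truncated weight $\vphi(x)=\min(1,x^{r_a\delta})$: the initial bound then follows trivially from \eqref{hyp:psi_init}; on $x>1$ one has $\Delta_\veps\vphi=0$, so that region only contributes a harmless Gr\"onwall term driven by $\sup_{x\geq1}|a'|$ and $\sup_{x\geq1}|b'|$; and the capped bound, combined with the tightness at infinity you already invoke, still yields Dunford--Pettis compactness in $L^1(x^{r_a\delta}dx)$. Your proof is repaired exactly by this substitution. A secondary imprecision: you cannot literally test \eqref{eq:weak_form_eps} with $\psi\in L^\infty$, since $\Delta_{\pm\veps}\psi$ is only $O(\|\psi\|_\infty/\veps)$; as in the paper, equicontinuity must first be proved for smooth compactly supported test functions and then extended to $L^\infty$ weights by pointwise approximation, Egorov's theorem and the uniform integrability you established.
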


\medskip

\noindent The proof of this theorem is based on the following lemma which proof is postponed below

\medskip

\begin{lem} \label{lem:psi_bound}
Assume $\eta \geq r_a$ and $r_a=r_b$.  Let a sequence $\{\veps_n\}$ converging to $0$. There exist $T>0$ 
and a sub-sequence $\{\veps_{n'}\}$ of $\{\veps_n\}$ such that 

\begin{equation*}
 \sup_{n'\geq0} \, \sup_{t\in[0,T]} \, \int_0^\infty \min(1,x^{r_a\delta}) \Psi(f^{\veps_{n'}}(t,x))\, dx 
< +\infty\,.
\end{equation*}

\end{lem}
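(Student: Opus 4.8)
The plan is to prove a Grönwall-type differential inequality for the weighted functional
\[ \mathcal E^\veps(t) := \int_0^\infty \min(1,x^{r_a\delta})\,\Psi(f^\veps(t,x))\,dx = \veps\sum_{i\geq 2} w_i^\veps\,(c_i^\veps(t))^{1+\delta}, \qquad w_i^\veps := \tfrac1\veps\int_{\Lambda_i^\veps}\min(1,x^{r_a\delta})\,dx, \]
and to close it using the pointwise bounds of Proposition \ref{prop:bound_laplace} together with the moment bounds of Lemma \ref{lem:u1xphi1}. First I would fix, via Lemma \ref{lem:compactness_weak_star_2}, a time $T>0$ and a subsequence $\{\veps_{n'}\}$ along which $u^{\veps_{n'}}\to u$ uniformly on $[0,T]$ with $\inf_{[0,T]}u>\rho=\overline b/\overline a$; since $r_a=r_b$ this forces $\overline a\,u^{\veps_{n'}}-\overline b\geq\kappa>0$ on $[0,T]$ for $n'$ large, a positivity that supplies the right sign to the dissipative contribution below. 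The initial value is controlled for free, since $\min(1,x^{r_a\delta})\leq1$ gives $\mathcal E^\veps(0)\leq\int_0^\infty\Psi(f^{\rm in,\veps})\,dx$, uniformly bounded by \eqref{hyp:psi_init}.

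Next I would differentiate $\mathcal E^\veps$ in time, insert the rescaled system \eqref{sys:BD_rescaled}, and perform a discrete summation by parts (Abel transform). This produces a boundary contribution $g_2^\veps J_1^\veps$, where $g_i^\veps=(1+\delta)w_i^\veps(c_i^\veps)^\delta$, together with a bulk sum $\sum_{i\geq2}(g_{i+1}^\veps-g_i^\veps)J_i^\veps$ over the fluxes $J_i^\veps=a_i^\veps u^\veps c_i^\veps-b_{i+1}^\veps c_{i+1}^\veps$. The boundary term is handled directly: dropping the nonpositive fragmentation part, $g_2^\veps J_1^\veps\leq(1+\delta)\,\alpha^\veps u^\veps(t)^2\,w_2^\veps(c_2^\veps)^\delta$, and since $w_2^\veps\sim(2\veps)^{r_a\delta}$ while $(c_2^\veps)^\delta\leq C\veps^{-r_a\delta}$ by Proposition \ref{prop:bound_laplace} and $u^\veps$ is bounded by \eqref{bound_u}, this term is bounded uniformly in $t$ and $n'$.

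The heart of the argument is the bulk sum, which I would treat as a discrete version of the formal identity for the limit transport equation $\partial_t f+\partial_x(Vf)=0$ with $V=au-b$, namely $\frac{d}{dt}\int w\Psi(f)\,dx=\int w'V\Psi(f)\,dx-\delta\int w\,\partial_xV\,\Psi(f)\,dx$. Splitting $g_{i+1}^\veps-g_i^\veps=w_{i+1}^\veps(\Psi'(c_{i+1}^\veps)-\Psi'(c_i^\veps))+(w_{i+1}^\veps-w_i^\veps)\Psi'(c_i^\veps)$ and using the convexity inequalities
\[ \Psi'(c_i)(c_{i+1}-c_i)\leq\Psi(c_{i+1})-\Psi(c_i)\leq\Psi'(c_{i+1})(c_{i+1}-c_i) \]
together with $c\,\Psi'(c)=(1+\delta)\Psi(c)$, the bulk sum reduces, up to lower-order discretization remainders, to the discrete analogue of the two integrals above. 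The decisive point is that, for the exact power law \eqref{hyp:powerlow} and the chosen weight exponent $r_a\delta$, these two integrals cancel at leading order on $x<1$ through the pointwise identity
\[ w'(x)\,V(x)=\delta\,w(x)\,\partial_xV(x), \qquad w(x)=x^{r_a\delta},\quad V(x)=(\overline a u-\overline b)x^{r_a},\quad 0<x<1, \]
while on $x\geq1$ one has $w\equiv1$, so the transport part drops out and the surviving dissipative part has the favourable sign guaranteed by $\overline a u-\overline b\geq\kappa>0$ (with $\partial_xV\sim x^{r_a-1}$ bounded there since $r_a<1$). The condition $\delta<1/r_a-1$, i.e. $(1+\delta)r_a<1$, is exactly what renders the singular weight $x^{r_a\delta}$ integrable against $x^{r_a-1}$ and keeps the remainders of the form $C\mathcal E^\veps+C$; the vanishing of the weight at $x=0$ kills the boundary term of the summation by parts, and the moment bounds \eqref{eq:phi_1} absorb the contributions from the large-size region.

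Assembling these estimates gives $\frac{d}{dt}\mathcal E^{\veps_{n'}}(t)\leq C\,(1+\mathcal E^{\veps_{n'}}(t))$ on $[0,T]$ with $C$ independent of $n'$, and Grönwall's lemma with the uniform initial bound finishes the proof. I expect the main obstacle to be the bulk sum: reproducing the exact leading-order cancellation between the transport and dissipative parts at the \emph{discrete} level, and showing that the discretization remainders together with the corner of $w$ at $x=1$ and the delicate behaviour near $x=0$ only contribute terms controlled by $C\mathcal E^\veps+C$. This is where the exact power-law assumption \eqref{hyp:powerlow}, the choice of weight exponent, the positivity $u>\rho$, and the bound $\delta<1/r_a-1$ are all simultaneously used.
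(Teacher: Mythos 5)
Your outline has the same skeleton as the paper's proof: the same weighted functional $\mathcal E^\veps(t)=\int_0^\infty \min(1,x^{r_a\delta})\Psi(f^\veps(t,x))\,dx$, the same use of Lemma \ref{lem:compactness_weak_star_2} to fix $T$ and a subsequence with $\overline a\,u^{\veps}-\overline b\geq\kappa>0$, a discrete summation by parts with the convexity of $\Psi$ and the identity $c\Psi'(c)=(1+\delta)\Psi(c)$, the correct continuous cancellation $w'V=\delta\,w\,\partial_xV$, and a Gr\"onwall conclusion; your treatment of the $i=2$ boundary term and of the region $x\geq1$ is also sound. The gap is exactly at the step you flag as the main obstacle: the claim that, after the leading-order cancellation, the discretization remainders on $x<1$ are controlled by $C\mathcal E^\veps+C$. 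This is false, and no estimate of that type can close the argument. For a cell with $x=\veps i<1$, the discrete transport combination
\[
i^{r_a}\bigl((y+1)^{r_a\delta}-y^{r_a\delta}\bigr)-\delta\,\bigl(i^{r_a}-(i-1)^{r_a}\bigr)\,y^{r_a\delta},\qquad y\in[i-1/2,i+1/2),
\]
is of size $i^{r_a(1+\delta)-2}$ after the cancellation, while the weight this cell carries in $\mathcal E^\veps$ is $\veps(\veps i)^{r_a\delta}$; the ratio is $\veps^{r_a-1}i^{r_a-2}$, which blows up as $\veps\to0$ for every bounded range of $i$ (and more generally for $i\ll\veps^{-(1-r_a)/(2-r_a)}$). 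So the remainders near $x=0$ are not dominated by $C\mathcal E^\veps$. Nor can Proposition \ref{prop:bound_laplace} absorb them over the whole problematic range: the Laplace-transform bound gives $\veps^{r_a}c_i^\veps\lesssim e^{iz}$, which degrades exponentially in $i$ and therefore controls only finitely many indices, whereas the bad cells are unboundedly many. Note also that your stated role for $\delta<1/r_a-1$ (integrability of $x^{r_a\delta}$ against $x^{r_a-1}$) cannot be the point, since $\int_0^1x^{r_a(1+\delta)-1}\,dx$ converges for every $\delta>0$.

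What actually closes the proof — and this is the idea missing from your proposal — is a \emph{sign} statement, not a size estimate. The paper's Lemma \ref{lem:estimequitue} shows that for all $i\geq I_0$, uniformly over the cell, the displayed combination is nonpositive: its expansion is a positive factor times $\frac{r_a(1+\delta)-1}{2}-x+O(1/i)$, and $r_a(1+\delta)<1$ makes this negative for $i$ large — this is the true role of the hypothesis $\delta<1/r_a-1$. Combined with the nonpositivity of the second discrete differences of $y^{r_a\delta}$ and $i^{r_a}$ (concavity) and the positivity $\overline a\,u^\veps-\overline b>0$, the \emph{entire} bulk contribution on $((I_0-1/2)\veps,1)$ is pointwise nonpositive and is simply dropped; no remainder is ever estimated there. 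Proposition \ref{prop:bound_laplace} is then invoked only for the finitely many indices $2\leq i\leq I_0-1$ and the junction term at $I_0$ — which is also why the summation by parts must start at $I_0$ rather than at $i=2$ as in your version: the sign inequality is not claimed, and may fail, for small $i$. With this replacement (sign lemma plus split of the sum at $I_0$) your outline becomes the paper's proof.
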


\medskip

{ {\it Proof of Theorem \ref{thm_density}.}
We reproduce the same proof as for Theorem \ref{thm:LS_slow} and \ref{thm:LS_compensated} and obtain a sub-sequence $f^{\veps_{n'}}$
that converges in measure. We now remark that, combining the estimates \eqref{eq:phi_1} in Lemma \ref{lem:u1xphi1} 
and the last Lemma \ref{lem:psi_bound} we can apply the Dunford-Pettis theorem and we have a weak compact subset 
$\mathcal K$ of $L^1(\Rb_+,x^{r\delta}dx)$ such that for all $t\in[0,T]$ and $n'\geq0$, $f^{\veps_{n'}}(t)\in\mathcal 
K$. We are 
now in position to prove that along another subsequence, still denoted by $\{\veps_{n'}\}$, the 
sequence converges to some $f$ in $C([0,T],w-L^1(\Rb_+,x^{r\delta}dx))$. Moreover, $f$ belongs to 
$L^\infty(0,T,L^1(\Rb_+,(1+x)dx)$. 
The proof follows similar arguments as in \cite[Proof of Theorem 2.2, p. 981]{Laurencot2002a} which consists in proving 
the equicontinuity of 
\[t \to \int_{0}^R f^\veps(t,x)\vphi(x)x^{r\delta}dx\,,\]
for all $\vphi\in L^\infty(0,R)$ and $R>0$. Indeed, by Eq.~\eqref{eq:equi_h} we have for any $\vphi \in \Cc^1$ with 
compact support in $(0,R)$ that (see also the proof of lemma \ref{lem:equicontinuity})
\[ \lim_{h\to0} \, \sup_{t\in[0,T-h]} \, \sup_{s\in(0,h)}\, \left| \int_0^\infty 
(f^\veps(t+s,x)-f^\veps(t,x) )\vphi(x)\, x^{r\delta} dx \right| = 0 \,. \]
Then taking a pointwise convergent sequence $\{\vphi^n\}$ in $\Cc_c([0,R])$ of $\vphi\in L^\infty(0,R)$ and using 
Egorov's theorem we get the desire results. Finally, we apply a variant of Arzela-Ascoli theorem for weak topology, 
see \cite[Theorem 1.3.2]{Vrabie}, so that for each $R>0$, the sequence is relatively compact in 
$C([0,T],w-L^1((0,R),r^{r\delta}dx)$. 
By the compact containment  we improve this results on $\Rb_+$.

\subsection*{Technical results}

\medskip

\noindent Before proving Lemma \ref{lem:psi_bound}, we start by some technical lemmas.

\medskip

\begin{lem}\label{lem1}
Let $\vphi \in \Cc_b\left([0,\infty)\right)$ non-negative. Then, for any $I\geq 3$,
\begin{multline} \label{eq:vphi_Psi}
\int_0^\infty \vphi(x) \left[ \Psi(f^\veps(t,x)) - \Psi(f^{in,\veps}(x)) \right]  \, dx \\ \leq \veps \sum_{i=2}^{I-1} 
\vphi_i^\veps \Psi(c_i^\veps(t)) + \int_0^t \left[\vphi_{I}^\veps a_{I-1}^\veps u^\veps(s) \Psi(c_{I-1}^\veps(s))  - 
\vphi_{I-1}^\veps b_I^\veps \Psi(c_{I}^\veps(s))\right]ds \\
+\int_0^t\int_{(I-1/2)\veps}^\infty \Big{[} a^\veps(x) u^\veps(s) \Delta_\veps\vphi(x) - b^\veps(x)   
\Delta_{-\veps}\vphi(x) \\ - \delta \left( u^\veps(s) \Delta_{-\veps}  a^\veps(x) - \Delta_{\veps}  b^\veps(x)  
\vphi(x)\right) \Big{]}\Psi(f^\veps(x,s))dxds \,. 
 \end{multline}
 where  $\vphi_i^\veps =  1/{\veps}  \int_{\Lambda_i^\veps} \vphi(x) \, dx$.
\end{lem}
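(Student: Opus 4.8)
The plan is to exploit that, by the very definition \eqref{eq:def_feps}, $f^\veps(t,\cdot)$ equals the constant $c_i^\veps(t)$ on each cell $\Lambda_i^\veps$, so $\Psi(f^\veps(t,\cdot))$ is piecewise constant too and
\[ \int_0^\infty\vphi(x)\,\Psi(f^\veps(t,x))\,dx=\veps\sum_{i\geq2}\vphi_i^\veps\,\Psi(c_i^\veps(t)). \]
I would split this sum at the index $I$. For the small sizes $2\le i\le I-1$ I simply discard the nonnegative initial contribution, using $\Psi\ge0$ and $\vphi_i^\veps\ge0$ (because $\vphi\ge0$), which already yields the first right-hand side term $\veps\sum_{i=2}^{I-1}\vphi_i^\veps\Psi(c_i^\veps(t))$. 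Everything then reduces to estimating the tail $\Sigma_{\ge I}(t):=\veps\sum_{i\ge I}\vphi_i^\veps\Psi(c_i^\veps(t))$, whose increment $\Sigma_{\ge I}(t)-\Sigma_{\ge I}(0)$ is exactly the large-$i$ part of the left-hand side.

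Next I would differentiate $\Sigma_{\ge I}$ in time and insert the Becker--D\"oring equations \eqref{sys:BD_rescaled}, giving $\frac{d}{dt}\Sigma_{\ge I}=\sum_{i\ge I}\vphi_i^\veps\Psi'(c_i^\veps)(J_{i-1}^\veps-J_i^\veps)$. A discrete Abel summation by parts, keeping the boundary contribution at $i=I$, then produces
\[ \frac{d}{dt}\Sigma_{\ge I}=\vphi_I^\veps\Psi'(c_I^\veps)\,J_{I-1}^\veps+\sum_{k\ge I}\big(\vphi_{k+1}^\veps\Psi'(c_{k+1}^\veps)-\vphi_k^\veps\Psi'(c_k^\veps)\big)J_k^\veps, \]
which I integrate on $[0,t]$. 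The heart of the argument is to convert the flux terms, which are linear in the $c_i^\veps$ but carry the factor $\Psi'(c_i^\veps)$, into genuine $\Psi$-contributions by convexity. Writing $J_k^\veps=a_k^\veps u^\veps c_k^\veps-b_{k+1}^\veps c_{k+1}^\veps$ from \eqref{sys:BD_rescaled_flux} and combining the convexity bound $\Psi'(y)(z-y)\le\Psi(z)-\Psi(y)$ with the power-law identity $y\Psi'(y)=(1+\delta)\Psi(y)$ (valid since $\Psi(y)=y^{1+\delta}$), one gets the two pointwise estimates $\Psi'(c_{k+1}^\veps)c_k^\veps\le\Psi(c_k^\veps)+\delta\Psi(c_{k+1}^\veps)$ and $\Psi'(c_k^\veps)c_{k+1}^\veps\le\Psi(c_{k+1}^\veps)+\delta\Psi(c_k^\veps)$. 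Applying these to every summand and to the boundary term $\vphi_I^\veps\Psi'(c_I^\veps)J_{I-1}^\veps$—the inequality directions being preserved because $a_k^\veps,b_{k+1}^\veps,u^\veps,\vphi_i^\veps\ge0$—produces three families of contributions: ``main'' terms carrying the finite differences $\vphi_{k+1}^\veps-\vphi_k^\veps$, order-$\delta$ terms, and terms localised at $i=I$.

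It then remains to reindex and collect. Using that on each cell $\int_{\Lambda_i^\veps}\Delta_{\veps}\vphi=\vphi_{i+1}^\veps-\vphi_i^\veps$, $\int_{\Lambda_i^\veps}\Delta_{-\veps}\vphi=\vphi_i^\veps-\vphi_{i-1}^\veps$, $\int_{\Lambda_i^\veps}\Delta_{-\veps}a^\veps=a_i^\veps-a_{i-1}^\veps$, $\int_{\Lambda_i^\veps}\Delta_{\veps}b^\veps=b_{i+1}^\veps-b_i^\veps$, and that $a^\veps,b^\veps,\Psi(f^\veps)$ are constant on $\Lambda_i^\veps$, each remaining sum over $i\ge I$ is rewritten as an integral over $((I-\tfrac12)\veps,\infty)$: the aggregation main term becomes the $a^\veps u^\veps\Delta_{\veps}\vphi$ integral, the fragmentation main term (after the shift $k\mapsto k+1$) the $-b^\veps\Delta_{-\veps}\vphi$ integral, and the order-$\delta$ terms assemble into the $-\delta(u^\veps\Delta_{-\veps}a^\veps\,\vphi-\Delta_{\veps}b^\veps\,\vphi)$ integral. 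This assembly is exactly the last right-hand side term (and incidentally shows that the $\Delta_{-\veps}a^\veps$ entry in the $\delta$-bracket must carry a factor $\vphi$, in parallel with the $\Delta_{\veps}b^\veps\,\vphi$ entry).

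The delicate point is the bookkeeping of the reindexations and sign-tracking at the cell $\Lambda_I^\veps$: the shift generates an order-one leftover $b_I^\veps(\vphi_I^\veps-\vphi_{I-1}^\veps)\Psi(c_I^\veps)$ whose piece $b_I^\veps\vphi_I^\veps\Psi(c_I^\veps)$ must cancel the boundary fragmentation term $-\vphi_I^\veps b_I^\veps\Psi(c_I^\veps)$, leaving $-\vphi_{I-1}^\veps b_I^\veps\Psi(c_I^\veps)$; together with the boundary aggregation piece $\vphi_I^\veps a_{I-1}^\veps u^\veps\Psi(c_{I-1}^\veps)$ this forms precisely the announced second right-hand side term. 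One must likewise check that the three order-$\delta$ boundary leftovers cancel the $i=I$ discrepancies created by the reindexations of the $\delta$-sums, so that only the clean integral survives. I expect this cancellation accounting to be the main obstacle; a secondary, routine, point is the a priori justification of the term-by-term differentiation of the series and of the summation by parts at fixed $\veps$, which I would handle by truncating the sum at a large index $N$ and letting $N\to\infty$, exactly as in \cite{Laurencot2002a}.
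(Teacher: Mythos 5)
Your proposal is correct and follows essentially the same route as the paper's proof: piecewise-constant reduction to sums, splitting at $I$ with the non-negative initial terms dropped for $2\le i\le I-1$, the Becker--D\"oring fluxes combined with the convexity bound $\Psi'(y)(z-y)\le\Psi(z)-\Psi(y)$ and the identity $y\Psi'(y)=(1+\delta)\Psi(y)$, summation by parts, and the cell-wise conversion of discrete differences into the $\Delta_{\pm\veps}$ integrals (your boundary bookkeeping, including the cancellation leaving $-\vphi_{I-1}^\veps b_I^\veps\Psi(c_I^\veps)$ and the vanishing of the order-$\delta$ leftovers, does check out). The only difference --- you Abel-sum first and then apply convexity, while the paper applies convexity separately to the aggregation and fragmentation parts ($A^\veps$, $B^\veps$) before reindexing --- is purely cosmetic; your parenthetical observation that $\vphi$ must multiply the whole $\delta$-bracket is also correct and fixes a misplaced parenthesis in the statement.
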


\medskip

\begin{proof}
The proof follows similar lines as in \cite[Lemma 4.1]{Laurencot2002a}, but we take profit of the 
explicit form of $\Psi$ to obtain a necessary finer estimate. We sketch it briefly below. From the (BD) system 
\eqref{sys:BD_rescaled}, it comes
\begin{multline*}
\int_0^\infty \vphi(x) \left[ \Psi(f^\veps(t,x)) - \Psi(f^{in,\veps}(x)) \right]  \, dx= \sum_{i\geq 2} 
\int_{\Lambda_i^\veps} \vphi(x)  \left[\Psi(c_i^\veps(t)) - \Psi(c_i^\veps(0))\right] \, dx \\
=  \veps \sum_{2\leq i\leq I-1}   \vphi_i^\veps \left[\Psi(c_i^\veps(t)) - \Psi(c_i^\veps(0))\right] + \sum_{i\geq I} \vphi_i^\veps \int_0^t [J_{i-1}^\veps(s) - 
J_i^\veps(s)] \Psi'(c_i^\veps(s))\, ds \,.
\end{multline*}
 We can decompose the latter in three parts,
 
\begin{equation*} 
 \begin{array}{rcl}  
   \ds \int_0^\infty \vphi(x) \left[ \Psi(f^\veps(t,x)) - \Psi(f^{in,\veps}(x)) \right] \, dx  & = & \ds  N^\veps(t) 
+ \int_0^t [A^\veps(s)+B^\veps(s)] \, ds \,,
 \end{array}
\end{equation*}
where
\begin{equation*}
\begin{array}{rcl}
\ds N^\veps(t) & := & \ds \veps   \sum_{2\leq i\leq I-1} \vphi_i^\veps  \left[\Psi(c_i^\veps(t)) - \Psi(c_i^\veps(0))\right]\,, \\[0.8em]
\ds A^\veps(t) & := & \ds \sum_{i\geq I} \vphi_i^\veps u^\veps(t) [a_{i-1}^\veps c_{i-1}^\veps(t) - a_{i}^\veps 
c_{i}^\veps(t)] \Psi'(c_i^\veps(t))\,,\\[0.8em]
\ds B^\veps(t) & := & \ds \sum_{i\geq I} \vphi_i^\veps [ b_{i+1}^\veps c_{i+1}^\veps(t)-b_{i}^\veps c_{i}^\veps (t)]  
\Psi'(c_i^\veps (t))\,.
\end{array}
\end{equation*}
Then, in $A^\veps$ we can re-write, using the convexity of $\Psi$, for all $i\geq I$,
\begin{multline*}
[a_{i-1}^\veps c_{i-1}^\veps(t) - a_{i}^\veps c_{i}^\veps(t)] \Psi'(c_i^\veps(t)) \\
=  a_{i-1}^\veps [c_{i-1}^\veps(t) -  c_{i}^\veps(t)]  \Psi'(c_i^\veps(t))  + (a_{i-1}^\veps-a_i^\veps) 
c_{i}\Psi'(c_i^\veps(t)) \\
\leq a_{i-1}^\veps \left( \Psi(c_{i-1}^\veps(t)) -   \Psi(c_{i}^\veps(t)) \right) +(a_{i-1}^\veps-a_i^\veps) 
c_i^\veps(t)\Psi'(c_i^\veps(t)) \,.
\end{multline*}
Then, reordering the term in the last inequality and then using that $x\Psi'(x) - \Psi(x) = \delta \Psi(x)$,
\begin{multline*}
[a_{i-1}^\veps c_{i-1}^\veps(t) - a_{i}^\veps c_{i}^\veps(t)] \Psi'(c_i^\veps(t))\\
\leq a_{i-1}^\veps  \Psi(c_{i-1}^\veps(t)) -  a_i^\veps \Psi(c_{i}^\veps(t)) +(a_{i-1}^\veps-a_i^\veps)  [ 
c_i^\veps(t)\Psi'(c_i^\veps(t)) 
-\Psi(c_i^\veps(t))] \\
= a_{i-1}^\veps  \Psi(c_{i-1}^\veps(t)) -  a_i^\veps \Psi(c_{i}^\veps(t)) - \delta  (a_{i}^\veps-a_{i-1}^\veps) 
\Psi(c_i^\veps(t))\,.
\end{multline*}
Thus, we obtain for $A$ the following estimation,
\begin{multline*}
 A^\veps(t) \leq \sum_{i\geq I}  a_i^\veps u^\veps (\vphi_{i+1}^\veps-\vphi_i^\veps) \Psi(c_i^\veps(t)) + 
\vphi_{I}^\veps a_{I-1}^\veps u^\veps(t) 
\Psi(c_{I-1}^\veps(t))\\- \delta u^\veps \sum_{i\geq I}\vphi_i^\veps (a_i^\veps-a_{i-1}^\veps) \Psi(c_i^\veps) \,.
\end{multline*}
We estimate $B$, by similar argument, to get,
\begin{multline*}
 B^\veps(t) \leq \sum_{i\geq I} \vphi_i^\veps [b_{i+1}^\veps \Psi(c_{i+1}^\veps)- b_i^\veps \Psi(c_i^\veps)] +\delta \sum_{i\geq I} \vphi_i^\veps (b_{i+1}^\veps -b_i^\veps) \Psi(c_i^\veps) \\
 \leq  \sum_{i\geq I} (\vphi_{i-1}^\veps - \vphi_{i}^\veps) b_{i}^\veps \Psi(c_{i}^\veps)  - \vphi_{I-1}^\veps b_I^\veps \Psi(c_{I}^\veps) +\delta \sum_{i\geq I} \vphi_i^\veps (b_{i+1}^\veps -b_i^\veps) \Psi(c_i^\veps)\,. 
\end{multline*}
Both estimates on  $A^\veps$ and  $B^\veps$ directly give \eqref{eq:vphi_Psi}.
\end{proof}

\medskip

\begin{lem}\label{lem:estimequitue}
For all $0\leq r<1$, and for all $0<\delta<\frac{1}{r}-1$, there exists $I_0$ such that for all $i\geq I_0$, and all 
$x\in[0,1]$,  
\begin{equation*}
\Big{[}i^{r}\left((i+1/2+x)^{r\delta}-(i-1/2+x)^{r\delta}\right)-\delta(i^{r}-(i-1)^{r}) (i-1/2+x)^{r\delta}\Big{]}\leq 
0\,,
\end{equation*}
\end{lem}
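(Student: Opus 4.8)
The plan is to divide the bracketed quantity by the positive factor $i^{r}(i-1/2+x)^{r\delta}$ and reduce the claim to a single scalar comparison. Writing $\gamma := r\delta$ (note $\gamma>0$ once $r>0$; the case $r=0$ is trivial, since then every power reduces to a constant and the whole bracket is identically $0$), and setting $u := 1/(i-1/2+x)$ and $v := 1/i$, the asserted inequality is equivalent to
\[ (1+u)^{\gamma}-1 \;\le\; \delta\bigl(1-(1-v)^{r}\bigr)\,. \]
I would then prove this asymptotically in $i$, uniformly over $x\in[0,1]$.

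The next step is to Taylor expand both sides in powers of $1/i$ up to second order. The key preliminary expansion is $u = \tfrac1i + \tfrac{1/2-x}{i^{2}} + O(1/i^{3})$, with remainder uniform in $x\in[0,1]$. Inserting this into $(1+u)^{\gamma}-1 = \gamma u + \tfrac{\gamma(\gamma-1)}{2}u^{2}+O(u^{3})$ and expanding $\delta(1-(1-v)^{r}) = \gamma v + \tfrac{\gamma(1-r)}{2}v^{2}+O(v^{3})$ (using $\delta r=\gamma$), the leading $1/i$ terms cancel exactly—both equal $\gamma/i$—and a short computation yields
\[ \bigl[(1+u)^{\gamma}-1\bigr]-\delta\bigl(1-(1-v)^{r}\bigr) \;=\; \frac{\gamma}{i^{2}}\left(\frac{\gamma+r}{2}-\frac12-x\right)+O\!\left(\frac{1}{i^{3}}\right)\,, \]
where the error is uniform in $x\in[0,1]$.

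To conclude I would invoke the hypothesis $\delta<1/r-1$, which is exactly $\gamma+r<1$, so that the factor in parentheses is bounded above by $\tfrac{\gamma+r}{2}-\tfrac12=:-\kappa<0$ for every $x\in[0,1]$ (the worst case being $x=0$). Hence the displayed difference is at most $-\gamma\kappa/i^{2}+C/i^{3}$ for a constant $C$ independent of $x$, which is strictly negative once $i>C/(\gamma\kappa)$; this defines the threshold $I_{0}$ and gives the inequality for all $i\ge I_{0}$ and $x\in[0,1]$.

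I expect the only delicate point to be the bookkeeping of the second-order expansion. Since the leading $O(1/i)$ contributions cancel, one is forced to retain the $(1/2-x)/i^{2}$ correction in $u$ and to keep all remainders uniform in $x$; it is precisely at this order that the sign of the estimate—and hence the role of the constraint $\gamma+r<1$—is decided, so careless truncation would lose the result.
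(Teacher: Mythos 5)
Your proposal is correct and follows essentially the same route as the paper: normalize by the positive factor $i^{r}(i-1/2+x)^{r\delta}$, Taylor expand to second order in $1/i$ with remainders uniform in $x\in[0,1]$, observe that the first-order terms cancel, and use $r(1+\delta)<1$ (your $\gamma+r<1$) to make the second-order coefficient strictly negative, which fixes $I_0$. Your writeup is in fact a more detailed version of the paper's one-line expansion, additionally handling the trivial case $r=0$ explicitly.
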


\medskip

\begin{proof}
Doing an expansion as $i\to \infty$, we easily obtain
\begin{multline*}
\Big{[}i^{r}\left((i+1/2+x)^{r\delta}-(i-1/2+x)^{r\delta}\right)-\delta(i^{r}-(i-1)^{r}) (i-1/2+x)^{r\delta}\Big{]}\\
=r\delta \frac{i^r(i-\frac{1}{2}+x)^{r\delta}}{i^2}\Big{[}\frac{r(1+\delta)-1}{2}-x+O(\frac{1}{i})\Big{]}\,.
\end{multline*}
We conclude straightforwardly as $r(1+\delta)-1<0$.
\end{proof}

\medskip

\subsection*{Proof of Lemma \ref{lem:psi_bound}}
In the following, let $r=r_a=r_b$ and $I=I_0$ given by Lemma \ref{lem:estimequitue}. We want to 
bound each term of Eq. \eqref{eq:vphi_Psi} with  $\vphi(x)=\min(1,x^{r\delta})$.
Remark the term $-\vphi_{I_0-1}^\veps b_{I_0}^\veps \Psi(c_{I_0}^\veps(t))$ can be easily 
dropped in Eq. \eqref{eq:vphi_Psi} since it is non-positive. Also, note that, for $2\leq i\leq I_0$,
\begin{equation*}
\veps \vphi_i^\veps \Psi(c_i^\veps(t)) \leq \veps^{1-r(1+\delta )} \vphi_i^\veps 
\left(\veps^{r}c_i^\veps(t)\right)^{1+\delta}\,.
\end{equation*}
Thus, since $\vphi_i^\veps$ is bounded  and $\delta \leq 1/r-1$, we apply Lemma 
\ref{lem:compactness_weak_star_2} and Proposition \ref{prop:bound_laplace} to obtain $T>0$ and a sub-sequence, still denoted by $\{\veps_n\}$, such that
\begin{equation}\label{begin}
\sup_{n\geq 0}\, \sup_{t\in[0,T]} \, \left(\veps_n \vphi_i^{\veps_n} \Psi(c_i^{\veps_n}(t))\right)<\infty.
\end{equation}
Similarly, using that $u^\veps(t)\leq K_m$, we have
\begin{multline}
\vphi_{I_0}^\veps a_{I_0-1}^\veps u^\veps(t)\Psi(c_{I_0-1}^\veps(t)) = 
\overline{a}(I_0-1)^{r}u^\veps(t)\left(\int_{I_0-1/2}^{I_0+1/2}y^{r \delta}dy\right) \left(\veps^{r} 
c_{I_0-1}^\veps(t)\right)^{1+\delta}\\
 \leq K_m \overline{a}(I_0-1)^{r}\left(\int_{I_0-1/2}^{I_0+1/2}y^{r \delta}dy\right) 
\sup_{\veps>0} \, \sup_{t\in[0,T]}\, \left(\veps^{r} c_{I_0-1}^\veps(t)\right)^{1+\delta} <\infty\,,
\end{multline}
By these estimates, the boundary terms in Eq.~\eqref{eq:vphi_Psi} are uniformly bounded. We are lead 
with the remaining integral term on $\left((I_0-1/2)\veps,\infty\right)$. Denote, for all $\veps>0$ and $x>0$, 
\[ D^\veps(x) = a^\veps(x) u^\veps(t) \Delta_\veps\vphi(x)- b^\veps(x)   \Delta_{-\veps}\vphi(x)  
-\delta \left( u^\veps \Delta_{-\veps}  a^\veps(x) - \Delta_{\veps}  b^\veps(x) \right)\vphi(x)\,. \]
Thus,
\begin{multline*}
\int_{(I_0-1/2)\veps}^{1} D^\veps(x) \Psi(f^\veps(x,t))dx\\
  =\sum_{i= I_0}^{1/\veps} \frac{1}{\veps}\int_{\Lambda_i^\veps}\Big{[} \left( a_i^\veps u^\veps(t) 
(\vphi(x+\veps)-\vphi(x)) - b_i^\veps   (\vphi(x)-\vphi(x-\veps)) \right) \\-\delta \left( 
u^\veps(a_{i}^\veps-a_{i-1}^\veps) -  (b_{i+1}^\veps-b_i^\veps) \right)\vphi(x) \Big{]} \Psi(c_i^\veps(t))dx\,.
\end{multline*}
Then, on $x\in (0,1)$, we have that $\vphi(x)=x^{r\delta}$, and letting $\Gamma_i=[i-1/2,i+1/2)$ and changing variable $\veps y=x$, we obtain
\begin{multline*}
\int_{(I_0-1/2)\veps}^{1} D^\veps(x) \Psi(f^\veps(x,t))dx\\
= \sum_{i= I_0}^{1/\veps} \veps^{r(1+\delta)}\int_{\Gamma_i}\Big{[} \left( \overline{a} i^r u^\veps(t) 
 ((y+1)^{r\delta}-y^{r\delta}) - \overline{b}i^r   (y^{r\delta}-(y-1)^{r\delta}) \right)\\-\delta \left( u^\veps 
 \overline{a}(i^r-(i-1)^r) - \overline{b} ((i+1)^r-i^r) \right)y^{r\delta} \Big{]} \Psi(c_i^\veps(t))dy\,.
\end{multline*}
Finally, rearranging the term we have
\begin{multline*}
\int_{(I_0-1/2)\veps}^{1} D^\veps(x) \Psi(f^\veps(x,t))dx\\
=\sum_{i= I_0}^{1/\veps} \veps^{r(1+\delta)}\int_{\Gamma_i}\Big{[} \left( \overline{a}  u^\veps(t)- 
 \overline{b}\right) \left( i^r ((y+1)^{r\delta}-y^{r\delta}) -\delta (i^r-(i-1)^r)y^{r\delta} \right)  \\ 
+\overline{b}i^r \left((y+1)^{r\delta}-2y^{r\delta}+(y-1)^{r\delta}\right)\\
+\delta  \overline{b} \left((i+1)^r-2i^r 
+(i-1)^r\right) y^{r\delta} \Big{]} \Psi(c_i^\veps(t))dy\,.
\end{multline*}
Then, as the second discrete derivative are negative, that is, for all $s<1$ and all $x>1$,
\begin{equation*}
\left((x+1)^{s}-2x^{s}+(x-1)^{s}\right)\leq 0\,,
\end{equation*}
we obtain
\begin{multline*}
\int_{(I_0-1/2)\veps}^{1} D^\veps(x) \Psi(f^\veps(x,t))dx\\
\leq \veps^{r(1+\delta)} \left( \overline{a}  
u^\veps(t)- \overline{b}\right) \sum_{i= I_0}^{1/\veps} \int_{\Lambda_i}\Big{[} i^r ((y+1)^{r\delta}-y^{r\delta})\\ 
-\delta (i^r-(i-1)^r)y^{r\delta}  \Big{]} \Psi(c_i^\veps(t))dy\,.
\end{multline*}
The term under the integral is negative by Lemma \ref{lem:estimequitue}. We now fix $T>0$ and extract a 
sub-sequence $\{\veps_{n'}\}$ given by Lemma  \ref{lem:compactness_weak_star_2} such that $\overline{a}  
u^\veps(t)- \overline{b} > 0$ on $[0,T]$. Thus,
\begin{equation}
\int_{(I_0-1/2)\veps}^{1} D^\veps(x) \Psi(f^\veps(x,t))dx \leq 0 \,.
\end{equation}
On the other hand we have, since $\Delta_\veps \vphi = 0$ on $(1,+\infty)$,
\begin{multline} \label{final}
\int_{1}^{\infty} \Big{[} D^\veps(x) \Psi(f^\veps(x,t))dx\\ 
\leq  \delta ( K_m \sup_{x\geq 1}\mid a'(x) \mid  + \sup_{x\geq 1}\mid b'(x) \mid)  
\int_{1}^{\infty}  \vphi(x)  \Psi(f^\veps(x,t))dx\,,
\end{multline}
and we conclude by estimates \eqref{begin} to \eqref{final} that, for some constant $K>0$ and all $t\in[0,T]$, 
\begin{equation*}
\int_0^\infty \vphi(x) \Psi(f^{\veps_n}(t,x)) \leq  K + \int_0^\infty \Psi(f^{in,\veps_n}(x))\, dx 
 + K \int_0^t \int_0^\infty \vphi(x) \Psi(f^{\veps_n}(t,x))\,. 
\end{equation*}
We conclude the proof with the Gronwall Lemma.

\subsection*{The general case}
The main difficulty to treat the case $r_a<r_b$ is to find a test function $\vphi$ in Eq. \eqref{eq:vphi_Psi} which 
make the term under the integral negative around $0$, but which also keep the boundary terms bounded. We believe that a 
good function would be 
\[\vphi(x) = \min( x^{r\delta}e^{-Kx^{r_b-r_a}},c)\,,\]
for some $c>0$ small and $K>0$ large enough. It recovers the case $r_a=r_b$ (with $c=1$). Computations are not presented here because too fastidious. Just let us show that, at the limit $\veps\to 0$,
\begin{multline*} 
 [ \overline a x^{r_a} u(t)  - \overline b x^{r_b}] \vphi'(x) - \delta \left[ r_a \overline a x^{r_a-1} u(t) - 
r_b \overline b x^{r_b-1} \right] \vphi(x)  \\
 = \frac{\vphi(x)}{x}  (r_b-r_a)\left[ \delta \overline b x^{r_b} - K x^{r_b-r_a} (\overline a x^{r_a} u(t) -\overline 
b x^{r_b}) \right] \,.
\end{multline*}
But since $u(t)> \rho$, it exists $x_0>0$ small and $\gamma>0$ such that the flux is bounded from below by $ \overline 
a x^{r_a}u(t) - \overline b x^{r_b} \geq \gamma \overline a x^{r_a}$ on 
$[0,x_0]$, thus

\begin{multline*}
 [ \overline a x^{r_a} u(t)  - \overline b x^{r_b}] \vphi'(x) - \delta \left[ r_a \overline a x^{r_a-1} u(t) - 
 r_b \overline b x^{r_b-1} \right] \vphi(x)  \\
 \leq  \frac{\vphi(x)}{x}(r_b-r_a) \left[  \delta \overline b - K \gamma  \right] x^{r_b} \,.
\end{multline*}
Hence, for $K$ large enough the term is negative around $0$, which was the essential ingredient of the proof of Theorem \ref{thm_density}.

\section{Discussion}\label{sec:disc}

In this work, we  obtained limit theorems to derive rigorously the link between a discrete-size 
coagulation-fragmentation model, the Becker-D\"oring (BD) model, and a continuous-size model, the Lifshitz-Slyozov (LS) 
model. We used weak-convergence in measure, to prove that a sequence of discrete stepwise functions 
associated to the BD model converges towards a measure solution of the LS model. The novelty of our work, compared to 
previous work in \cite{Laurencot2002a,Collet2002}, consists of being able to rigorously defined a boundary flux 
condition for the limit non-linear transport partial differential equation of the LS model. This boundary condition has 
been obtained thanks to an averaging procedure for the smaller-sized cluster, namely the one of size $i=2$. It is 
classical when passing from a discrete to a continuous model (think of a random walk converging to a Brownian motion) 
to accelerate the rates (or equivalently, the time) between each discrete transition. Hence, each individual 
discrete-size cluster evolves in the re-scaled BD model~\eqref{sys:BD_rescaled} at a faster time scale than the 
continuous density function $f^\veps$ in Eq.~\eqref{eq:weak_form_eps}. Although the fast-motion involves a dynamical 
system of infinite dimension, we could obtain appropriate $L^\infty$-bounds on the time trajectories of each 
discrete-sized cluster, and proves that, in the limit when the scaling parameter $\veps\to 0$, each discrete-sized 
cluster is the unique solution of an algebraic equation, which appears to be the same as the steady-state condition of a constant monomer BD model.

 Let us now discuss in more details what were the scaling assumptions that lead to the study of the 
system~\eqref{sys:BD_rescaled} (for the mathematical derivation, see the appendix~\ref{annex:adimensionalixation}). 
Roughly, the system~\eqref{sys:BD_rescaled} is obtained when we consider that the clusters have very large sizes but 
are present in a low quantity compared to a large excess of free particles. The rescaled equations are obtained in a 
large volume hypothesis, and the scaling of the macroscopic reaction rates accounts for the volume-dependence of the 
aggregation (so that aggregation and fragmentation occur at the same time scale).

\noindent However, importantly enough, the first aggregation (nucleation) rate is scaled differently from the other 
aggregation rates  (see Appendix \ref{annex:adimensionalixation}) and this comes from the special role played by the 
free particles. 
Despite the large excess of free particles, in this framework, the nucleation occurs at the same time scale than the 
aggregation of large-sized clusters, and has for consequence to prevent the formation of too many clusters.  A 
different choice at this step would lead to a rapid depletion of free particles, and would result in different mass 
conservation where free particles are not present as a distinct entity any more-- see the work \cite{Laurencot2002a} on 
the Lifshitz-Slyozov-Wagner equation.

\noindent  Finally, we allowed a flexibility in the scaling of the first fragmentation (de-nucleation), quantified by 
the exponent $\eta$. We found (see Theorems \ref{thm:LS_slow}-\ref{thm:LS_compensated}-\ref{thm:LS_fast} ) that 
different values of $\eta$ give rise to distinct boundary condition at the limit when $\veps$ goes to $0$. The most 
natural case, $\eta=r_b$, corresponds to the case where the clusters of size $2$ dissociate at the same speed than the 
small-sized clusters of size $i$, $i\geq 3$. Then, the case $\eta>r_b$ corresponds to an asymptotically irreversible 
nucleation (and leads to a macroscopic flux $N(t)= \alpha u(t)^2$, which corresponds to the microscopic nucleation rate 
-- this conclusion actually holds for all $\eta>r_a$). And the case $\eta\leq r_a<r_b$ corresponds to a strongly 
reversible de-nucleation (and leads to $0 \leq N(t)<\alpha u(t)^2$ according to the value $r_a$).

 Hence, our work shed lights on which appropriate boundary condition should be used for the LS equation  (or 
similar continuous coagulation models) according to specific microscopic hypotheses (unfavorable, balanced or 
irreversible nucleation). We believe that our procedure could be applied to several related models (for instance, the  
Lifshitz-Slyozov-Wagner equation mentioned above, or the prion equation \cite{doumic}) and should help to build reduced structured population models while 
taking into account of their intrinsic multi-scale nature (see \cite{Yvinec2012,Yvinec2016} for applications).

\appendix

\section{From the original to the dimensionless BD system} \label{annex:adimensionalixation}

The original BD model gives the evolution of $(c_i)_{i\geq 1}$ by 
\begin{equation*}
 \begin{array}{rcll}
  \ds \frac{d}{dt}c_1 & \ds = & \ds -J_1-\sum_{i=1}^\infty J_i \, , & t\geq 0\,, \\[1.5em] 
  \ds \frac{d}{dt}c_i & \ds = & \ds J_{i-1}- J_i \, , & t\geq0\,, \ i\geq 2\,, 
 \end{array}
\end{equation*}
where $J_i$ is the flux between clusters of size $i$ and $i+1$, given by
\begin{equation*}
 J_i=a_ic_1c_i-b_{i+1}c_{i+1}\,,\quad i\geq 1\,.
\end{equation*}
Here, coefficients $a_i$ and $b_{i+1}$ denote respectively the rate of aggregation and the rate of fragmentation. 
Observe that such model (at least formally) preserves the total number of particles (no source nor sink), that is
\begin{equation*}
 \sum_{i=1}^\infty ic_i(t)= \sum_{i=1}^\infty ic_i(0)=: m \,, \quad  t\geq0\,.
\end{equation*}
The classical approach to operate a scaling is to write the equations in a dimensionless form. We follow 
\cite{Collet2002} and introduce the following characteristic values:
\begin{itemize}
 \item[$\overline T\phantom{_1}$] : characteristic time,
 \item[$\overline C_1$] : characteristic value for the free particle concentration $c_1$ ,
 \item[$\overline C\phantom{_1}$] : characteristic value for the cluster concentration $c_i$, for $i \geq 2$,
 \item[$\overline A_1$] : characteristic value for the first aggregation coefficient $a_1$,
 \item[$\overline B_2$] : characteristic value for the first fragmentation coefficient $b_2$,
 \item[$\overline A\phantom{_1}$]: characteristic value for the aggregation coefficients $a_i$, $i\geq 2$,
 \item[$\overline B\phantom{_1}$]: characteristic value for the fragmentation coefficients $b_i$, $i\geq3$, 
 \item[$\overline M_c$] : characteristic value for the total mass $m$.
\end{itemize}
Thus, the dimensionless quantities are 
\[ \tilde t = t/\overline T \,, \quad \tilde m = m/\overline M_c\,, \quad \tilde u( \tilde t) = c_1( \tilde t \overline 
T)/\overline C_1, \quad \tilde c_i( \tilde t) = c_i( \tilde t \overline T)/\overline C\,, \]
and for all $i\geq 2$,
\begin{equation*}
\tilde a_i  =  a_i/\overline A\,, \quad  \tilde b_{i+1}  =   b_{i+1}/\overline B\,, 
\end{equation*}
and the particular scaling at the boundary (we use different letters to emphasize this point):
\[\tilde \alpha  :=  a_1/ \overline A_1\,, \quad \tilde \beta :=   b_2/ \overline B_2\,. \]
Then, the quantities $\tilde u(\tilde t)$, $\tilde c_i(\tilde t)$ satisfy the equation 

\begin{equation*}
\begin{array}{l}
\ds \frac{d}{d\tilde t} \tilde u = \ds \frac{\overline{C}}{\overline{C}_1} \Big{[}-  \overline A \overline{C}_1 
\overline{T} \Big{(} 2\frac{\overline A_1 \overline{C}_1}{\overline A \overline{C}} \tilde \alpha\tilde u^2 + 
\sum_{i\geq 2}\tilde a_{i}\tilde u \tilde c_{i} \Big{)} + \overline{B}\overline{T} \Big{(} 
2\frac{\overline{B}_2}{\overline{B}} \tilde \beta \tilde c_{2} - \sum_{i\geq 3}\tilde b_{i}\tilde c_{i} \Big{)} 
\Big{]}\,, \\[0.8em]
\ds \frac{d}{d\tilde t} \tilde c_{2} = \ds   \overline A \overline{C}_1 \overline{T} (\frac{\overline A_1 
\overline{C}_1}{\overline A \overline{C}} \tilde \alpha \tilde u^2 -\tilde a_{2}\tilde u \tilde c_{2}) 
-\overline{B}\overline{T} (\frac{\overline{B}_2}{\overline{B}}\tilde \beta \tilde c_{2}-\tilde b_{3}\tilde c_{3})\,, 
\\[0.8em]
\ds \frac{d}{d\tilde t} \tilde c_{i} = \ds   \overline A \overline{C}_1 \overline{T} (\tilde a_{i-1}\tilde u \tilde 
c_{i-1}-\tilde a_{i}\tilde u \tilde c_{i}) -\overline{B}\overline{T} (\tilde b_{i}\tilde c_{i}-\tilde b_{i+1}\tilde 
c_{i+1}) \,, \quad i\geq 3\,\,. \\[0.8em]
\end{array}
\end{equation*}
The mass conservation reads
\begin{equation*}
 \tilde u( \tilde t) + \frac{\overline C}{\overline C_1} \sum_{i\geq 2} i \tilde c_i(\tilde t) = \frac{\overline 
M_c}{\overline C_1} \tilde m  \,.
\end{equation*}
We introduce the scaling parameter $\veps>0$ for the size of the clusters. Namely, a cluster  of size $i$ is now seen 
as a cluster of size roughly $\veps i$ so that we can define the density \eqref{eq:def_feps}. Then, the scaling 
obtained in Eq.~\eqref{sys:BD_rescaled} corresponds to the following choice of relations between the characteristic 
values
\begin{equation*}
 \overline C/ \overline C_1 = \veps^2 \,,\quad   \overline A \, \overline C_1\overline  T = \overline B \, \overline T 
=  \frac{1}{\veps}\,,\quad   \overline M_c / \overline C_1 = 1\,, 
\end{equation*}
and, at the boundary,
\begin{equation*}
 \overline A_1 = \veps^{2} \overline A \,,
\end{equation*}
and
\begin{equation*}
  \overline B_2 = \veps^{\eta}\overline B\,,
\end{equation*} 
with $\eta\geq 0$. The reader interested in a physical justification of this scaling can refer to the discussion in
Section~\ref{sec:disc} and to \cite{Collet2002}. 

 \bibliographystyle{abbrv}
 \bibliography{biblio}

\end{document}